\DeclareMathOperator{\diam}{diam}
\newcommand{\rr}{\ensuremath{\mathbb{R}}}
\newcommand{\zz}{\ensuremath{\mathbb{Z}}}
\newcommand{\etanorm}[2]{\ensuremath{||#1||_{C^{0,\eta}(#2)}}}
\newcommand{\etasemi}[2]{\ensuremath{[#1]_{C^{0,\eta}(#2)}}}
\newcommand{\tlipsemi}[2]{\ensuremath{[#1]_{C_t^{0,1}(#2)}}}
\newcommand{\tetasemi}[2]{\ensuremath{[#1]_{C_t^{0,\eta}(#2)}}}
\newcommand{\bdry}[1]{\ensuremath{\partial_p #1}}
\newcommand{\linfty}[2]{\ensuremath{||#1||_{L^{\infty}(#2)}}}
\newcommand{\alphasemi}[2]{\ensuremath{[#1]_{C^{0,\alpha}(#2)}}}
\newcommand{\rdel}{\ensuremath{r_{\delta}}}
\newcommand{\ydel}{\ensuremath{\tilde{Y}_{\rdel}}}
\newcommand{\kdel}{\ensuremath{\tilde{K}_{\rdel}}}
\newcommand{\kdeltop}{\ensuremath{\tilde{K}_{\rdel}^T}}
\newcommand{\Umesh}{\ensuremath{\mathcal{U}_h}}
\newcommand{\xbar}{\ensuremath{\bar{x}}}
\newcommand{\tbar}{\ensuremath{\bar{t}}}
\newcommand{\tr}{\ensuremath{\text{tr}}}
\newcommand{\convmesh}{\ensuremath{v^-_{\theta,\theta}}}
\newcommand{\Ph}{\ensuremath{\mathcal{S}_{h}}}
\newcommand{\PM}{\ensuremath{\mathcal{P}_M}}
\newcommand{\PP}{\ensuremath{\mathcal{P}}}
\newcommand{\PPi}{\ensuremath{\mathcal{P}_\infty}}
\newcommand{\U}{\ensuremath{U}}
\newtheorem{thm}{Theorem}[section]
\newtheorem{cor}[thm]{Corollary}
\newtheorem{prop}[thm]{Proposition}
\newtheorem{lem}[thm]{Lemma}
\theoremstyle{definition}
\newtheorem{defn}[thm]{Definition}
\theoremstyle{remark}
\newtheorem{rem}[thm]{Remark}
\begin{document}
\title[Error estimates]{Error estimates for approximations of nonlinear uniformly parabolic equations}
\author{Olga Turanova}
\date{September 11, 2014}
\keywords{fully nonlinear parabolic equations, finite difference methods}
\subjclass[2010]{35K55, 65N06, 35B05}

\begin{abstract}
We introduce the notion of $\delta$-viscosity solutions for fully nonlinear uniformly parabolic PDE on bounded domains. We prove that $\delta$-viscosity solutions are uniformly close to the actual viscosity solution, with an explicit error of order $\delta^{\alpha}$. As a consequence we obtain an error estimate for implicit monotone finite difference approximations of uniformly parabolic PDE. 
\end{abstract}
\maketitle

\section{Introduction}
We introduce $\delta$-viscosity solutions for the nonlinear parabolic problem
\begin{equation}
\label{eq:main}
 u_t-F(D^2u)=0  \quad \text{ in } \Omega \times (0,T).
\end{equation}
We prove an estimate between viscosity solutions and $\delta$-viscosity solutions of (\ref{eq:main}) under the assumption that the nonlinearity $F$ is uniformly elliptic (see (F\ref{ellipticity}) below). As a consequence, we find a rate of convergence for monotone and consistent implicit finite difference approximations to (\ref{eq:main}). Both results generalize the work of Caffarelli and Souganidis in \cite{Approx schemes} and \cite{Rates Homogen}, who consider the time-independent case.

The nonlinearity $F$ is a continuous function on $\mathcal{S}_{n\times n} $, where $\mathcal{S}_{n\times n}$ is the set of $n\times n$ real symmetric matrices endowed with the usual order and norm (for $X\in \mathcal{S}_{n\times n}$,  $||X|| = \sup_{|v|=1}|Xv|$). We make the following assumptions:
\begin{enumerate}[({F}1)]
\item \label{ellipticity} $F$ is uniformly elliptic with constants $0<\lambda \leq \Lambda$, which means that for  any $X \in \mathcal{S}_{n\times n}$ and for all $Y\geq 0$,
\[
\lambda||Y|| \leq F(X+Y)-F(X) \leq \Lambda||Y||;
\]
and,
\item \label{last prop of F}  $F(0)=0$.
\end{enumerate} 
We also assume: 
\begin{enumerate}[({U}1)]
\item
\label{assum U}
$\Omega$ is a bounded subset of $\rr^n$.
\end{enumerate} 

The main result is an error estimate between viscosity solutions and $\delta$-viscosity solutions of (\ref{eq:main}). The definition of $\delta$-viscosity solutions is in Section \ref{subsection:delta def}. Next we present a statement of our main result that has been simplified for the introduction; the full statement is in Section \ref{section: comparison}.

\begin{thm}
\label{thm:vague comparison}
Assume (F\ref{ellipticity}), (F\ref{last prop of F}) and (U1). Assume $u$ is a viscosity solution of (\ref{eq:main}) that is Lipschitz continuous in $x$ and H\"older continuous in $t$. Assume $\{u_\delta\}_{\delta>0}$ is a family of H\"older continuous $\delta$-viscosity solutions of (\ref{eq:main}) that satisfy 
\[
u_\delta = u \text{ on the parabolic boundary of $\Omega \times (0,T)$ }
\]
 for all $\delta$. There exist positive constants $C$ and $\alpha$ that do not depend on $\delta$ such that  for all $\delta$ small enough,
\[
||u-u_\delta||_{L^{\infty}(\Omega \times (0,T))} \leq C\delta^\alpha .
\]
\end{thm}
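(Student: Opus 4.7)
The plan is to adapt the Caffarelli--Souganidis argument from \cite{Approx schemes,Rates Homogen} to the parabolic setting. The first step is to replace $u$ by a semi-convex regularization. I would use the spatial sup-convolution
\[
\udel(x,t) = \sup_{y \in \Omega}\Bigl\{u(y,t) - \tfrac{1}{2\delta}|x-y|^2\Bigr\},
\]
which, thanks to the Lipschitz-in-$x$ regularity of $u$, differs from $u$ by at most a constant times $\delta$ and is semi-convex in $x$ with $D^2\udel \geq -\delta^{-1}I$. Because $F$ depends only on $D^2u$, the sup-convolution $\udel$ is still a viscosity subsolution of \eqref{eq:main} on a slightly smaller domain (with a loss near $\bdry{(\Omega\times(0,T))}$ controlled by the H\"older modulus in $t$). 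A parallel inf-convolution $\vdel$ will be needed to bound $u_\delta - u$ from above, and the two bounds are symmetric.

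Next I would study the auxiliary function $w = \udel - u_\delta$ after subtracting a small affine-in-time correction so that $w \leq 0$ on the parabolic boundary. The goal is to bound $\sup w^+$ in terms of $\delta$. Following the elliptic template, I would introduce the parabolic concave envelope $\concenv$ of $w^+$ over a suitable cylinder and restrict attention to the contact set $\contset$. On this contact set $w$ is touched from above by a smooth concave paraboloid; combined with the semi-convexity of $\udel$, this forces $\udel$ to be twice-differentiable from above (and Alexandrov-type facts give a.e.\ pointwise second derivatives), while also providing a paraboloid that touches $u_\delta$ from below. This is exactly the situation in which the $\delta$-viscosity property of $u_\delta$ can be applied against the test paraboloid, and uniform ellipticity (F\ref{ellipticity}) together with $u$ being a genuine subsolution allows one to derive a pointwise inequality of the form
\[
(-w_t + \mathcal{M}^-(D^2 w))(x,t) \leq C\delta^{\alpha_0} \quad \text{on } \contset,
\]
with an error coming entirely from the $\delta$-viscosity defect and the regularization error of $\udel$.

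With this pointwise control in hand, I would apply a parabolic Alexandrov--Bakelman--Pucci type estimate to the concave envelope: the $L^\infty$ norm of $w^+$ is controlled by an $L^{n+1}$-type integral of the ``positive part of the parabolic Hessian'' of $\concenv$ over $\contset$. Since that integrand is pointwise bounded by the inequality above, one obtains $\sup w^+ \leq C\delta^{\alpha_0/(n+1)}|\contset|^{1/(n+1)}$, and a covering/iteration argument of Krylov--Safonov type (as in \cite{Approx schemes}) upgrades this to the desired $C\delta^\alpha$ bound. Applying the same scheme to $u_\delta - \vdel$ yields the other inequality.

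The main obstacle I anticipate is the parabolic ABP step combined with the asymmetry introduced by sup-convolving only in $x$. In particular, one must carefully handle the $w_t$ term when $\udel$ is only H\"older in $t$ (so $\partial_t \udel$ has no classical meaning), and one must verify that the concave envelope $\concenv$ enjoys enough regularity in time for the measure-theoretic ABP argument to go through. Getting the correct power $\alpha$ and tracking the dependence on the H\"older exponent of $u$ in time through the sup-convolution error and through the iteration will be the most delicate piece.
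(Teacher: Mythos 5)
Your overall template (sup-convolve $u$ in $x$, compare with $u_\delta$ via a monotone/concave envelope and a parabolic ABP estimate) matches the paper's, but there is a genuine gap at the heart of the argument: the claimed pointwise inequality $(-w_t+\mathcal{M}^-(D^2w))\le C\delta^{\alpha_0}$ \emph{at every point of the contact set} does not follow from semi-convexity of $\udel$ alone. To invoke the $\delta$-viscosity property of $u_\delta$ at a contact point you must produce a polynomial $P\in\PPi$ that stays below $u_\delta$ on an entire backward cylinder $Y^-_\delta(x,t)$ of \emph{fixed} size $\delta$, and to extract an error of size $\delta^{\alpha_0}$ from $P_t-F(D^2P)\ge 0$ you must also know that $P$ approximately solves the equation, i.e.\ that $u$ (hence $\udel$) admits a \emph{quantitative} second-order expansion with error controlled by $M(|x-y|^3+|t-s|^2+\dots)$ on that whole cylinder. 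Alexandrov twice-differentiability of a semi-convex function gives only an unquantified $o(|x-y|^2)$ error at a.e.\ point, which is useless at the fixed scale $\delta$. The paper supplies the missing ingredient as Theorem \ref{Thm A} (a parabolic analogue of Theorem A of \cite{Approx schemes}): solutions have controlled second-order expansions outside a set whose measure decays like $M^{-\sigma}$, and Proposition \ref{prop: more regularity for convolutions} transfers this to the $x$-sup-convolution. Because the expansion is available only on a large-measure ``good set'' $\Psi_M$ and not everywhere, one cannot integrate a pointwise bound over the whole contact set as you propose; instead the paper runs the ABP inequality in reverse: if $\sup w^+$ were large the contact set would have measure at least of order $\delta^{3\zeta(n+1)}$, hence would meet $\Psi_M$, and at such a point the strict-subsolution perturbation $-\delta^{1/4}t$ combined with the $\delta$-supersolution property produces a contradiction. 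Your ``covering/iteration argument of Krylov--Safonov type'' does not substitute for this step.

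A secondary omission: to apply the ABP-type estimate (Proposition \ref{pre ABP}) one needs a one-sided bound $D^2(\,\cdot\,)\le KI$ in the sense of distributions and a H\"older-in-$t$ bound on the function whose envelope is taken; since $u_\delta$ is merely H\"older continuous, the paper first replaces it by its full space-time inf-convolution (Proposition \ref{properties of inf-convolution}), which preserves the $\delta$-supersolution property and supplies exactly these bounds, at the cost of boundary-layer errors that are then estimated. You correctly flag the time-regularity of the envelope as an obstacle, but the proposal does not resolve it; this regularization of $u_\delta$ itself is how the paper does so.
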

We remark that if $u$ is the solution of a boundary value problem with sufficiently regular boundary data, then the regularity conditions on $u$ of Theorem \ref{thm:vague comparison} are satisfied (see Remark \ref{remAboutLips} for more details). We make this an assumption  in order to  avoid discussing boundary value problems and to keep the setting as simple as possible.

The notion of $\delta$-viscosity solutions was introduced by Caffarelli and Souganidis in \cite{Approx schemes}  as a tool for obtaining uniform estimates for viscosity solutions. In \cite{Approx schemes}, $\delta$-viscosity solutions were used to establish an error estimate for finite difference approximations of nonlinear uniformly elliptic equations. An error estimate between viscosity and $\delta$-viscosity solutions of uniformly elliptic equations was obtained by Caffarelli and Souganidis in \cite{Rates Homogen}. This was an important step in establishing a rate for homogenization in random media \cite{Rates Homogen}. 

The main challenge in obtaining an error estimate between viscosity and $\delta$-viscosity solutions, in both the elliptic and parabolic setting, is overcoming the lack of regularity of the viscosity solution $u$. Indeed, the proof of the error estimate in \cite{Approx schemes} is based on a regularity result \cite[Theorem A]{Approx schemes}, which says that outside of sets of small measure, solutions of uniformly elliptic equations have second-order expansions with controlled error. We prove a similar result for solutions of equation (\ref{eq:main}). This is Theorem \ref{Thm A} of this paper, and it is an essential part of our proof of Theorem \ref{thm:vague comparison}. 

But even once it is known that the solution $u$ of (\ref{eq:main}) has second-order expansions with controlled error, it is necessary to further regularize $u$ and  the $\delta$-viscosity solution $u_\delta$ in order establish an estimate for their difference. For this we use the classical inf- and sup- convolutions, along with  another regularization of inf-sup type, which we call  $x$-sup- and $x$-inf- convolutions. Both of these regularizations preserve the notions of viscosity and $\delta$-viscosity solution. Moreover, we show that the  regularity  we establish in Theorem \ref{Thm A} for solutions $u$ of (\ref{eq:main}) is also enjoyed by the $x$-inf and $x$-sup convolutions of $u$ (Proposition \ref{prop: more regularity for convolutions}).

In this paper we also study implicit finite difference approximations to (\ref{eq:main}) and prove an error estimate between the solution $u$ of (\ref{eq:main}) and approximate solutions to (\ref{eq:main}). We have simplified the notation in the introduction in order to state this result here; see Section \ref{section: approx} for all the details about approximation schemes and for  the full statement of the error estimate. 

We write the finite difference approximations as 
\begin{equation}
\label{eq:main h}
\Ph[u](x,t) = \delta^-_{\tau} u (x,t) - \mathcal{F}_h(\delta^2 u(x,t))=0  \text{ in } (\Omega \times (0,T))\cap E.
\end{equation}
Here $E=h\zz^n \times h^2\zz$ is the mesh of discretization, $\Ph[u]$ is the implicit finite difference operator, and $\delta^-_{\tau} u$ and $\delta^2 u$ are finite difference quotients associated to a function $u$.   We assume:
\begin{enumerate}[({S}1)] 
\item \label{monotone}  if $u_h^1$ and $u_h^2$ are solutions of (\ref{eq:main h}) with $u_h^1\leq u_h^2$ on the discrete boundary of $\Omega\times(0,T)$, then $u_h^1\leq u_h^2$ in $(\Omega\times(0,T))\cap E$; and
\item
\label{consistent} 
there exists a positive constant $K$ such that for all $\phi\in C^3(\Omega\times(0,T))$ and all $h>0$,
\[
\sup|\phi_t-F(D^2\phi)-\Ph[\phi]|\leq K(h+ h\linfty{D_x^3\phi}{\Omega\times (0,T)} + h^2\linfty{\phi_{tt}}{\Omega\times (0,T)}).
\]
\end{enumerate} 
Schemes that satisfy (S\ref{monotone}) and (S\ref{consistent}) are said to be, respectively, \emph{monotone} and \emph{consistent with an error estimate for $F$}.
 We prove:
\begin{thm}
\label{thm:mesh comparison vague}
Assume (F\ref{ellipticity}), (F\ref{last prop of F}) and (U1).  Assume that $\Ph$ is a monotone  implicit approximation scheme for (\ref{eq:main}) that is consistent with an error estimate. Assume  $u$ is a viscosity solution of (\ref{eq:main}) that is Lipschitz continuous in $x$ and H\"older continuous in $t$ and that $u_h$ is a H\"older continuous solution of (\ref{eq:main h}). Assume that for each $h>0$,
\[
u_h=u \text{ on the discrete boundary of $\Omega\times(0,T)$}. 
\]
 There exist positive constants $C$ and $\alpha$ that do not depend on $h$ such that for every $h$ small enough,
\[
||u-u_h||_{L^{\infty}((\Omega \times (0,T))\cap E)} \leq Ch^\alpha.
\]
\end{thm}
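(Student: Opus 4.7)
The strategy is to reduce Theorem \ref{thm:mesh comparison vague} to Theorem \ref{thm:vague comparison} by showing that (a suitable continuous extension of) the mesh solution $u_h$ is a $\delta$-viscosity solution of \eqref{eq:main} for a parameter $\delta = \delta(h)$ that goes to zero with $h$. Once this is established, Theorem \ref{thm:vague comparison} gives $\|u-u_h\|_{L^\infty} \le C\delta^\alpha$, and choosing $\delta$ to be an appropriate power of $h$ converts this into the desired estimate of the form $Ch^{\alpha'}$.

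The first step is to extend $u_h$ from $E$ to a continuous function $\tilde u_h$ on $\Omega \times (0,T)$, for instance by piecewise-multilinear interpolation over the mesh cells. Because the hypothesis says $u_h$ is H\"older continuous (on the mesh), this extension is H\"older continuous on the whole cylinder with the same exponent, agrees with $u$ on the parabolic boundary up to an error of order $h^\beta$ (for some $\beta>0$ coming from the H\"older modulus), and satisfies $\|\tilde u_h - u_h\|_{L^\infty(E)} = 0$.

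Next I would check the $\delta$-viscosity property of $\tilde u_h$. Given a test function $\phi$ in the admissible class for the $\delta$-viscosity definition (Section \ref{subsection:delta def}), which is designed so that $\phi$ is smooth and $\|D^3_x\phi\|_\infty$, $\|\phi_{tt}\|_\infty$ are controlled by a negative power of $\delta$, suppose $\phi$ touches $\tilde u_h$ from above at a point $(\bar x, \bar t)$. After perturbing $\phi$ by $O(h^\gamma)$ and shifting to the nearest mesh point, so that $\phi - u_h$ still attains its strict minimum at the shifted point, I would apply the discrete comparison principle (S\ref{monotone}) to conclude $\Ph[\phi](\bar x,\bar t) \ge 0$, modulo error from the perturbation. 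The consistency estimate (S\ref{consistent}) then rewrites this as
\[
\phi_t(\bar x, \bar t) - F(D^2\phi(\bar x,\bar t)) \ge - K\bigl(h + h\|D^3_x\phi\|_\infty + h^2\|\phi_{tt}\|_\infty\bigr),
\]
and using the bounds on the higher derivatives of $\phi$ this becomes an inequality of the form $\phi_t - F(D^2\phi) \ge -o(1)$ as $h \to 0$, provided that $h$ is chosen sufficiently small compared with $\delta$. A symmetric argument for $\phi$ touching from below makes $\tilde u_h$ a $\delta$-viscosity solution. One then balances the errors by setting $\delta = h^\beta$ for a suitable $\beta \in (0,1)$ and applies Theorem \ref{thm:vague comparison} on (a slight interior retraction of) $\Omega \times (0,T)$, absorbing the boundary mismatch by the H\"older regularity of $u$ and $\tilde u_h$.

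The main obstacle is bookkeeping the interplay between the discrete object $u_h$ (defined only on $E$), the continuous $\delta$-viscosity framework, and the class of admissible test functions. Precisely, I have to confirm that the admissible test functions for the $\delta$-viscosity definition have higher derivatives controlled by a fixed negative power of $\delta$, so that the consistency error $K(h+h\|D^3_x\phi\|_\infty + h^2\|\phi_{tt}\|_\infty)$ from (S\ref{consistent}) is beaten by a positive power of $h$ after the choice $\delta = h^\beta$. Equally delicate is handling the mismatch between the contact point $(\bar x,\bar t)$ and the nearest mesh point, and the treatment of mesh points near $\partial_p(\Omega\times(0,T))$; both are standard perturbation arguments but they dictate the exponent $\alpha$ in the final estimate.
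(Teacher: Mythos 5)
Your overall strategy --- interpret $u_h$ as a $\delta$-viscosity solution with $\delta\sim h$ and invoke the continuous error estimate --- is the right one and is the paper's strategy in spirit. But the execution breaks down at exactly the step you identify as delicate, and the missing device is the inf-/sup-convolution of the mesh function. Two specific problems. First, you misread the test class: in Definition \ref{defn delta} the test objects are paraboloids $P\in\PPi$, for which $D^3_xP\equiv 0$ and $P_{tt}\equiv 0$, so the consistency error in (S\ref{consistent}) is simply $Kh$; there is no class of smooth test functions with derivatives blowing up like a negative power of $\delta$. Second, and more seriously, your passage from ``$P$ touches the interpolant $\tilde u_h$ from below at a non-mesh point $(\bar x,\bar t)$'' to a discrete inequality for $u_h$ does not work as described. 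The comparison principle (S\ref{monotone}) compares two \emph{solutions} of the scheme, not a solution and a test paraboloid; what is actually needed is the termwise monotonicity of $\mathcal{F}_h$ applied to ordered finite differences. To get ordered finite differences at a mesh point $(x_0,t_0)$ you need $P+g\le u_h$ at \emph{all} mesh points of the cylinder with \emph{equality} at $(x_0,t_0)$; shifting to the nearest mesh point does not give equality there, and restoring it by adding the gap $g=u_h(x_0,t_0)-P(x_0,t_0)$ is only legitimate if $(x_0,t_0)$ is the argmin of $u_h-P$ over the cylinder $Y^-_\delta(\bar x,\bar t)$ --- a point whose location you do not control and which may sit at the edge of the cylinder, where the neighbors $x_0\pm y$, $t_0-h^2$ needed for $\delta^2_y$ and $\delta^-_\tau$ leave the region where $P\le u_h$ is known. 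A generic $O(h^\gamma)$ perturbation of $P$ cannot repair this, since the second differences divide by $|y|^2\sim h^2$.

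The paper's fix (Proposition \ref{item: mesh solves equation}) is to replace the interpolant by the mesh inf-convolution $\convmesh$: the quadratic penalization produces an \emph{exact} contact mesh point $(x^*,t^*)$, localizes it within $\omega(h,\theta)$ of $(x,t)$ so that it is an interior mesh point, and yields the clean comparisons $\delta^2_yP(x,t)\le\delta^2_y v(x^*,t^*)$ and $\delta^-_\tau P(x,t)\ge\delta^-_\tau v(x^*,t^*)$, after which monotonicity of $\mathcal{F}_h$ and (S\ref{consistent}) give $P_t-F(D^2P)\ge -Kh$. Note also that this produces a $\delta$-supersolution of the equation with right-hand side $-Kh$ rather than $0$ (handled by the further perturbation $v\mapsto v+ht$), and that the paper then applies Proposition \ref{main lemma} directly rather than Theorem \ref{main result}, because that proposition requires the semiconcavity bound $D^2v\le\delta^{-\zeta}I$ and the time-H\"older bound --- both of which are supplied by the inf-convolution and not by a raw interpolant. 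If you incorporate the mesh inf-/sup-convolution as the bridge between the discrete and continuous settings, the rest of your outline (boundary bookkeeping, balancing $\delta$ against $h$) goes through essentially as you describe.
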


The convergence of monotone and consistent approximations of fully nonlinear second order PDE was first established by Barles and Souganidis \cite{BarlesSouganidis}. Kuo and Trudinger later studied the existence of monotone and consistent approximations for nonlinear elliptic and parabolic equations and the regularity of the approximate solutions $u_h$ (see \cite{KT estimates, Kuo Trudiner elliptic ops, Kuo Trudinger max princip parabolic, Kuo Trudinger parabolic diff ops}). They showed, both in the elliptic and in the parabolic cases, that if $F$ is uniformly elliptic, then there exists a monotone finite difference scheme $\Ph$ that is consistent with $F$, and that the approximate solutions $u_h$ are uniformly equicontinuous. However, obtaining an error estimate remained an open problem. 

The first error estimate for approximation schemes was established by Krylov in \cite{Krylov 1998} and \cite{Krylov 1999} for nonlinearities $F$ that are either convex or concave, but possibly degenerate. Krylov used stochastic control methods that apply in the convex  or concave case, but not in the general setting.  Barles and Jakobsen in \cite{BarlesJakobsen2002} and \cite{BarlesJakobsen2005} improved Krylov's error estimates for  convex or concave equations. In \cite{Krylov2005} Krylov improved the error estimate to be of order $h^{1/2}$, but still in the convex/concave case. In addition,  Jakobsen \cite{Jakobsen2004, Jakobsen2006} and Bonnans, Maroso, and Zidani \cite{Bonnans} established error estimates for special equations or for special dimensions. The first error estimate without a convexity or concavity assumption was obtained  Caffarelli and Souganidis in \cite{Approx schemes} for  nonlinear elliptic equations.  To our knowledge, Theorem \ref{thm:mesh comparison vague} is  the first  error estimate for  nonlinear parabolic equations that are neither convex nor concave.

While this paper was in preparation, we learned of the preprint of Daniel \cite{Daniel}. It contains a result \cite[Theorem 1.2]{Daniel} that is similar to our Theorem \ref{Thm A}. His proof also uses methods similar to those of Caffarelli and Souganidis in \cite[Theorem A]{Approx schemes}. Theorem \ref{Thm A} is the only overlap between this paper and \cite{Daniel}.

Our paper is structured as follows. In Section \ref{preliminaries} we establish notation, give the definition of $\delta$-viscosity solutions, and state several known results. In Section \ref{section: regularity} we prove Theorem \ref{Thm A}, the regularity result that is essential for the rest of our arguments. In Section \ref{section: Two different inf- and sup- convolutions} we discuss regularizations of inf-sup type and establish  Proposition \ref{prop: more regularity for convolutions}, a version of Theorem \ref{Thm A} for $x$-sup and $x$-inf convolutions. In Section \ref{section: main prop} we prove a key estimate between viscosity solutions of (\ref{eq:main}) and sufficiently regular $\delta$-viscosity solutions of (\ref{eq:main}). Theorems \ref{thm:vague comparison} and \ref{thm:mesh comparison vague} are straightforward consequences of this estimate. In Section \ref{section: comparison} we give the precise statement and proof of Theorem \ref{thm:vague comparison}. Section \ref{section: approx} is devoted to introducing the necessary notation and stating known results about approximation schemes. The precise statement and proof of the error estimate for approximation schemes is in Section \ref{section:error est approx}. We also include an appendix with known results about inf- and sup- convolutions.

\section{Preliminaries}
\label{preliminaries}
In this section we establish notation, recall the definition of viscosity solutions for parabolic equations, give the definition of $\delta$-viscosity solutions, and recall some known regularity results for solutions of (\ref{eq:main}).
\subsection{Notation}
Points in $\rr^{n+1}$ are denoted $(x,t)$ with $x\in \rr^n$. The parabolic distance between two points is 
\[
d((x,t), (y,s))=(|x-y|^2+|s-t|)^{1/2}.
\]
We denote the usual Euclidean distance in $\rr^{n+1}$ by $d_e((x,t),(y,s))= (|x-y|^2+|s-t|^2)^{1/2}$.
Throughout the argument we consider parabolic cubes, denoted by
\[
Q_r(x,t)=[x-r,x+r]^n\times(t-r^2,t],
\]
and forward and backward cylinders, 
\[
Y_r(x, t)= B_{r}(x) \times (t ,t+r^2] \text{ and } Y^-_r (x, t) = B_{r}(x) \times (t - r^2 ,t],
\]
where $B_r(x)=\{y: |x-y|<r\}$ is the open ball in $\rr^n$. We  write $B_r$, $Q_r$, $Y_r$ and $Y^-_r$ to mean $B_r(0)$, $Q_r(0)$, $Y_r(0)$ and $Y^-_r(0)$, respectively.

For $\Omega\subset \rr^n$, the parabolic boundary of $\Omega\times (a,b)$ is defined as
\[
\bdry (\Omega \times (a,b)) = (\Omega \times \{a\}) \cup (\partial \Omega \times (a,b)).
\]

The time derivative of a function $u$ is denoted by $u_t$ or $\partial_t u$. The gradient of $u$ with respect to the space variable $x=(x_1,...,x_n)$ is denoted by $Du=(u_{x_1},...,u_{x_n})$, and the Hessian of $u$ with respect to $x$ is denoted by $D^2u$. In addition, 
we use  $u^+$ and $u^-$ to denote, respectively, the positive and negative parts of a function $u$.

We say that a constant is \emph{universal} if it is positive and depends only on $\Lambda$, $\lambda,$ and $n$.

We introduce notation for several families of paraboloids. 
\begin{defn} Let $M>0$. We define:
\begin{enumerate}
\item the class of \emph{convex paraboloids of opening $M$}:
\[
\PM^+ =\left\{ P(x,t)=c+l\cdot x + mt + \frac{M}{2}|x|^2  \text{ where } l\in \rr^n, c, m\in \rr, |m| \leq M\right\};
\]
\item the class of \emph{concave paraboloids of opening $M$}:
\[
 \PM^- = \{-P(x,t) | P\in \PM^+\};
\] 
\item the class of paraboloids of arbitrary opening:
\[
\PPi = \left\{ P(x,t)=c+l\cdot x + m t + x\cdot Qx^T \text{ where } l\in \rr^n,c,m\in \rr, Q\in \mathcal{S}_{n\times n} \right \}; 
\]
\item the set of polynomials that are quadratic in $x$ and linear in $t$ with the only mixed term of the form $a\cdot xt$:
\[
\PP = \left\{ P(x,t)=c+l\cdot x + m t + a\cdot x t + x\cdot Qx^T \text{ where } l,a\in \rr^n, c, m\in \rr,Q\in \mathcal{S}_{n\times n} \right \}.
\]
\end{enumerate}
\end{defn}
\begin{rem}
\label{rem:pinfty} We remark that if $P(x,t)$ is a paraboloid in $\PPi$, then $P_t$ and $D^2P$ are constants.
\end{rem}
We will make use of  the following seminorms, norms, and function spaces:
\begin{defn}The class of continuous functions on $U\subset \rr^{n+1}$ is denoted $C(U)$. The class $C^2(U)$ is the set of functions $\phi$ that are differentiable in  $t$ and twice differentiable in $x$, with $\phi_t\in C(U)$ and $D^2\phi\in C(U)$.
\end{defn}
\begin{defn}
For $\eta\in (0,1]$ and $u\in C(\Omega\times (a,b))$,  we define:
\begin{align*}
\etasemi{u}{\Omega\times (a,b)} &= \sup_{(x,t), (y,s)\in \Omega\times (a,b)}\frac{|u(x,t)-u(y,s)|}{d((x,t),(y,s))^\eta},\\
[u]_{C^{0,\eta}_x(\Omega\times (a,b))}&= \sup_{x,y\in \Omega; t\in (a,b)} \frac{|u(x,t)-u(y,t)|}{|x-y|^\eta},\\
\tetasemi{u}{\Omega\times (a,b)} &= \sup_{ t,s\in (a,b); x\in \Omega} \frac{|u(x,t)-u(x,s)|}{|t-s|^\eta}, \text{ and,}\\
\etanorm{u}{\Omega\times (a,b)}&= \linfty{u}{\Omega\times (a,b)}+\etasemi{u}{\Omega\times (a,b)}.
\end{align*}
\end{defn}
\begin{defn} For $\eta\in (0,1]$, we define:
\begin{align*}
C^{0,\eta}(\Omega\times (a,b))&=\{ u\in C(\Omega\times(a,b)) :\etanorm{u}{\Omega\times (a,b)}<\infty\},\\
C^{0,\eta}_x(\Omega\times(a,b)) &= \left\{ u\in C(\Omega\times(a,b)): [u]_{C^{0,\eta}_x(\Omega\times(a,b))}+ \linfty{u}{\Omega\times (a,b)} <\infty\right\}.
\end{align*}
\end{defn}

We will also need the following notion of differentiability:
\begin{defn}
For a constant $K$, we say that $u$ satisfies $D^2u(x,t)\geq KI$ (resp. $D^2u(x,t)\leq KI$) \emph{in the sense of distributions} if there exists a quadratic polynomial 
\[
P(y)= c+p\cdot y +\frac{K|x-y|^2}{2}
\]
 such that $u(x,t)=P(x)$ and $u(y,t)\geq P(y)$ (resp. $u(y,t)\leq P(y)$) for all $y\in B_r(x)$, for some $r>0$.
\end{defn}

\subsection{Viscosity and $\delta$-viscosity solutions}
\label{subsection:delta def} 

We recall the definition of viscosity solutions for parabolic equations. 

\begin{defn}
\label{defn visc}
\begin{enumerate}
\item A function $u\in C(\Omega\times(0,T))$ is a \emph{viscosity supersolution} of (\ref{eq:main})  if for all $(x, t) \in \Omega\times(0,T)$, any $\phi\in C^2(\Omega\times (0,T))$ with $\phi \leq u $  on $Y^-_{\rho} (x, t)$ for some $\rho>0$ and  $\phi(x,t)=u(x,t)$ satisfies
\[
\phi_t(x,t) - F(D^2\phi(x,t)) \geq 0.
\]
\item A function $u\in C(\Omega\times(0,T))$ is a \emph{viscosity subsolution} of (\ref{eq:main}) if for all $(x, t) \in \Omega\times(0,T)$, any $\phi\in C^2(\Omega\times(0,T))$ with $\phi \geq u $  on $Y^-_{\rho} (x, t)$ for some $\rho>0$ and  $\phi(x,t)=u(x,t)$ satisfies
\[
\phi_t(x,t) - F(D^2\phi(x,t)) \leq 0.
\]
\item We say that $u\in C(\Omega\times(0,T))$ is a \emph{viscosity solution} of (\ref{eq:main}) if $u$ is both a sub- and a super- solution of (\ref{eq:main}).
\end{enumerate}
\end{defn}

\begin{rem}
In the above definitions, we require the test function $\phi$ to stay above (or below) $u$ on a backward cylinder $Y^-_\rho(x,t)$. However, this definition is equivalent to the usual one, which requires the test function to stay above (or below) $u$ on a Euclidean open set around $(x,t)$. This equivalence is proven by Crandall,  Kocan, and \'{S}wi\c{e}ch  \cite[Lemma 1.4]{crandallkocan} for $L^p$ viscosity solutions. Their proof carries over into our setting with  no modifications.
\end{rem}

See Section 8 of Crandall, Ishii, and Lions' \cite{User's guide} for further discussion of viscosity solutions of parabolic equations.

We now introduce $\delta$-viscosity solutions for  (\ref{eq:main}), following the definition of \cite{Approx schemes} and \cite{Rates Homogen}. 
\begin{defn}
\label{defn delta}
Fix $\delta>0$. 
\begin{enumerate}
\item A function $v\in C(\Omega\times(0,T))$ is a \emph{$\delta$-viscosity supersolution} of (\ref{eq:main}) if for all $(x, t) \in \Omega\times(0,T)$  such that $Y^-_{\delta}(x, t) \subset \Omega\times(0,T)$,  any $P\in\PPi$ with $P \leq v $  on $Y^-_{\delta} (x, t)$ and  $P(x,t)=v(x,t)$ satisfies
\[
P_t - F(D^2P) \geq 0.
\]
\item A function $v\in C(\Omega\times(0,T))$ is a \emph{$\delta$-viscosity subsolution} of (\ref{eq:main}) if for all $(x, t) \in \Omega\times(0,T)$  such that $Y^-_{\delta}(x, t) \subset \Omega\times(0,T)$,  any $P\in\PPi$ with $P\geq v$ on $Y^-_{\delta} (x, t)$ and  $P(x,t)=v(x,t)$ satisfies
\[
P_t - F(D^2P) \leq 0.
\]
\item
We say $v$ is a \emph{$\delta$-viscosity solution} of (\ref{eq:main}) if it is both a $\delta$-viscosity sub- and  super- solution.
\end{enumerate}
\end{defn}
From now on we will say ``solution" to mean viscosity solution and  ``$\delta$-solution" to mean $\delta$-viscosity solution.

From the definitions, it is clear that a viscosity solution of (\ref{eq:main}) is a $\delta$-solution of (\ref{eq:main}). The main difference between the definitions of viscosity and $\delta$-viscosity solution is that for $v$ to be a $\delta$-supersolution (resp. subsolution), any test polynomial must stay below (resp. above) $v$ on a set of fixed size.

\subsection{Known results}

We  introduce the Pucci extremal operators, which are defined for constants $0<\lambda\leq \Lambda$ and $X\in \mathcal{S}_{n\times n}$ by 
\begin{align*}
\mathcal{M}_{\lambda, \Lambda}^-(X) &= \lambda \sum_{e_i >0} e_i +\Lambda \sum_{e_i<0} e_i,\\ \mathcal{M}_{\lambda, \Lambda}^+(X) &= \Lambda \sum_{e_i >0} e_i +\lambda \sum_{e_i<0} e_i,
\end{align*}
where the $e_i$ are the eigenvalues of $X$ (see Caffarelli  and Cabre \cite{Cabre Caffarelli book} and Wang \cite{WangI}).
We also introduce the so-called upper and lower monotone envelopes of a function, which  play the role of the convex and concave envelopes in the regularity theory of elliptic equations. We follow the notes of  Imbert and Silvestre in \cite[Section 4]{ImbertSilvestre} for our presentation of Definition \ref{defn env}, Lemma \ref{lem:representation} and Proposition \ref{pre ABP} below.
\begin{defn}
\label{defn env}
Let $\Omega$ be a convex subset of $\rr^n$ and let  $u:\Omega \times (a,b) \rightarrow \rr$ be continuous.
\begin{enumerate}
\item The \emph{lower monotone envelope} of $u$ is the largest function $v:\Omega \times (a,b) \rightarrow \rr$ that lies below $u$ and is non-increasing with respect to $t$ and convex with respect to $x$. It is often denoted by $\underline{\Gamma}(u)$.
\item The \emph{upper monotone envelope} of $u$ is the smallest function $v:\Omega \times (a,b) \rightarrow \rr$ that lies above $u$ and is non-decreasing with respect to $t$ and concave with respect to $x$. It is often denoted by $\bar{\Gamma}(u)$.
\end{enumerate}
\end{defn}

We have the following representation formulas for the upper and lower envelopes of $u$ (\cite[Section 4]{ImbertSilvestre}):
\begin{lem}
\label{lem:representation}
Assume $u\in C(Y_\rho^-)$. Then
\[
\underline{\Gamma}(u)(x,t) = \sup \{\zeta \cdot x +h : \  \zeta \cdot y + h \leq u(y,s) \text{ for all } (y,s)\in  Y_\rho^-\cap\{s\leq t\}\}
\]
and
\[
\bar{\Gamma}(u)(x,t) = \inf \{\zeta \cdot x +h : \  \zeta \cdot y + h \geq u(y,s) \text{ for all } (y,s)\in  Y_\rho^-\cap\{s\leq t\}\}.
\]
\end{lem}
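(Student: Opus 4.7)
The plan is to establish each identity by proving two matched inequalities, using the variational (maximality/minimality) characterizations of the monotone envelopes in Definition \ref{defn env}. I would treat the formula for $\underline{\Gamma}(u)$ first; the $\bar{\Gamma}(u)$ statement follows by the symmetric argument, with convexity and non-increasing monotonicity replaced by concavity and non-decreasing monotonicity. Write $V(x,t)$ for the right-hand side of the first formula.

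The easy direction is $V\le\underline{\Gamma}(u)$: I would show that $V$ is itself a competitor in the definition of $\underline{\Gamma}(u)$. Each admissible affine function $y\mapsto\zeta\cdot y+h$ is convex in $y$ and constant in $t$, so $V$ is convex in $x$ as a supremum of convex functions. The admissible set $\{(\zeta,h):\zeta\cdot y+h\le u(s,y)\text{ for all }s\in(a,t],\ y\in B_\rho\}$ shrinks as $t$ grows because additional constraints are imposed, so $V$ is non-increasing in $t$. Specializing the constraint to $s=t$ and $y=x$ gives $\zeta\cdot x+h\le u(x,t)$ for every admissible $(\zeta,h)$, so $V\le u$. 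Maximality of $\underline{\Gamma}(u)$ yields $V\le\underline{\Gamma}(u)$.

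For the reverse inequality, fix $(x_0,t_0)\in Y_\rho^-$ and set $w=\underline{\Gamma}(u)$. The map $x\mapsto w(x,t_0)$ is a real-valued, bounded convex function on the open ball $B_\rho$, so standard convex analysis produces a (global) subgradient $\zeta\in\rr^n$ at $x_0$: letting $h=w(x_0,t_0)-\zeta\cdot x_0$, the affine function $L(y)=\zeta\cdot y+h$ satisfies $L(y)\le w(y,t_0)$ for every $y\in B_\rho$. The key step is to propagate this into the full admissibility condition using the monotonicity in $t$: for $s\in(a,t_0]$ and $y\in B_\rho$, the non-increasing property of $w$ gives $w(y,s)\ge w(y,t_0)\ge L(y)$, and since $w\le u$, also $u(y,s)\ge L(y)$. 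Thus $(\zeta,h)$ is admissible for $V(x_0,t_0)$, whence $w(x_0,t_0)=L(x_0)\le V(x_0,t_0)$.

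The main obstacle is ensuring the existence of a global subgradient for $w(\cdot,t_0)$ on $B_\rho$, for which I would record that $w$ is bounded above by $u$ and bounded below by the constant $-\|u\|_{L^\infty(Y_\rho^-)}$ (itself a competitor in the variational definition), making $w(\cdot,t_0)$ a proper, real-valued convex function on the open convex set $B_\rho$. The $\bar{\Gamma}(u)$ case is handled by the same scheme after sign flips: concavity in $x$ of $w=\bar{\Gamma}(u)$ produces a supporting affine hyperplane from above at $(x_0,t_0)$, and the non-decreasing-in-$t$ property then converts $L(y)\ge w(y,t_0)$ into $L(y)\ge w(y,s)\ge u(y,s)$ for all $s\in(a,t_0]$, delivering the required admissibility.
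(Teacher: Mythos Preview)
The paper does not supply its own proof of Lemma \ref{lem:representation}; it simply attributes the representation formulas to \cite[Section 4]{ImbertSilvestre}. Your argument is the standard convex-analytic proof and is correct: the ``easy'' inequality follows because the right-hand side is itself a competitor in the variational definition of the envelope, and the ``hard'' inequality follows from the existence of a supporting hyperplane to the convex slice $\underline{\Gamma}(u)(\cdot,t_0)$ at the interior point $x_0$, propagated backward in time by monotonicity. The only point worth tightening is your boundedness claim for $w=\underline{\Gamma}(u)$: continuity of $u$ on the half-open cylinder $Y_\rho^-$ does not by itself force $u$ to be bounded, so to guarantee that the constant competitor $-\|u\|_{L^\infty}$ exists (and hence that $w(\cdot,t_0)$ is real-valued and admits a subgradient) you should either invoke an ambient $L^\infty$ bound on $u$ or note that in every application in the paper $u$ is bounded. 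Aside from this, your proof fills in what the paper left to the cited reference.
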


The following fact will play a central role in our arguments. In subsection \ref{subsec:proof of pre ABP} of the Appendix, we explain how Proposition \ref{pre ABP} follows from parts of the proof of   the parabolic version of the Alexandroff-Bakelman-Pucci estimate as presented in \cite[Section 4.1.2]{ImbertSilvestre}. We refer the reader to \cite[Section 4]{ImbertSilvestre} for a history of the  parabolic version of the Alexandroff-Bakelman-Pucci estimate and the relevant references.
\begin{prop}
\label{pre ABP}
Assume $u\in C(Y^-_\rho)$ is such that $u\geq 0$ on $\bdry Y^-_\rho$. 
Assume that  there exists a constant $K$ so that $\tlipsemi{u}{Y^-_\rho}\leq K$ and $D^2u(x,t)\leq  KI$ in the sense of distributions for every $(x,t)\in Y^-_\rho$. 
 There exists a universal constant $C$ such that
\begin{equation}
\label{cons of ABPbd}
\sup_{Y^-_\rho} u^- \leq C \rho^{\frac{n}{n+1}} |\{u=\Gamma\}|^{\frac{1}{n+1}}K,
\end{equation}
where  $\Gamma$ is the lower monotone envelope of $\min(u,0)$  extended by 0 to $Y^-_{2\rho}$.
\end{prop}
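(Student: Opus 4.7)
The plan is to extract the estimate from the standard proof of the parabolic Alexandroff-Bakelman-Pucci inequality, exactly as sketched in Imbert-Silvestre \cite{ImbertSilvestre}, replacing the usual forcing bound by a direct use of the hypothesized one-sided regularity of $u$. Concretely, I would mirror the classical elliptic ABP argument of ``image volume of the gradient map equals Jacobian times contact set volume,'' adapted to the parabolic anisotropic scaling.

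First I set $M = \sup_{Y_\rho^-} u^-$, which I may assume is positive, and extend $-u^-$ by $0$ to $Y_{2\rho}^- \setminus Y_\rho^-$. The extension is continuous since $u \geq 0$ on $\bdry Y_\rho^-$. Using the representation formula from Lemma~\ref{lem:representation}, at each contact point $(x,t) \in \{u = \Gamma\}$ there is an affine-in-$x$, constant-in-$t$ function $\ell(y) = \zeta \cdot y + h$ satisfying $\ell \leq -u^-$ on $\{s \leq t\}$ with $\ell(x) = -u^-(x,t) = u(x,t)$; this associates to every contact point a ``supporting data'' pair $(\zeta(x,t), h(x,t)) \in \rr^{n+1}$, giving a map $\Phi(x,t) = (\zeta(x,t), h(x,t))$ defined a.e.\ on $\{u = \Gamma\}$.

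Next I would bound $|\Phi(\{u=\Gamma\})|$ from below by exhibiting many admissible supporting planes. Following the parabolic ABP bookkeeping, for each slope $\zeta$ with $|\zeta| \leq cM/\rho$ one can raise the intercept $h$ through an interval of length $\sim M$ before the plane $\zeta \cdot y + h$ ceases to lie below $-u^-$ for some $s \leq t$; as $t$ also varies this produces $|\Phi(\{u=\Gamma\})| \gtrsim M^{n+1}/\rho^n$. The anisotropic contribution $M/\rho$ in the space slopes versus $M$ in the intercept is precisely what gives the exponent $\tfrac{n}{n+1}$ on $\rho$ in \eqref{cons of ABPbd}. In the other direction, at each contact point $\Gamma$ is tangent to $u$ from below, so the distributional hypothesis $D^2 u \leq KI$ forces $0 \leq D^2\Gamma \leq KI$ in the sense of distributions, while the monotonicity of $\Gamma$ in $t$ together with $\tetasemi{u}{Y_\rho^-} \leq K$ forces $-K \leq \Gamma_t \leq 0$. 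A.e.\ differentiability of the convex-in-$x$, monotone-in-$t$ function $\Gamma$ then lets me apply the area formula to $\Phi$, and the Jacobian bound $|\mathrm{Jac}\,\Phi| \leq CK^{n+1}$ yields
\[
|\Phi(\{u=\Gamma\})| \leq C K^{n+1} \, |\{u=\Gamma\}|.
\]
Combining the two bounds gives $M^{n+1} \leq C \rho^n K^{n+1} |\{u=\Gamma\}|$, which is \eqref{cons of ABPbd} after taking the $(n+1)$-th root.

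The main obstacle I anticipate is a technical one: making the area formula rigorous on $\{u=\Gamma\}$, since $\Gamma$ is only convex in $x$ and merely monotone in $t$, so classical a.e.\ twice differentiability applies only slicewise. This is handled in \cite{ImbertSilvestre} by a time-slicing plus covering argument that I would import rather than redo; the one-sided bounds $D^2u \leq KI$ and $u_t \geq -K$ on the contact set are exactly what the Imbert-Silvestre argument needs in place of the usual $F(D^2u) = f$ input, so the adaptation is essentially bookkeeping.
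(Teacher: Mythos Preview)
Your proposal is correct and follows essentially the same route as the paper's proof. The paper packages the argument via the map $G\Gamma(x,t)=(\Gamma-x\cdot D\Gamma,\,D\Gamma)$, shows its image contains a set of volume $\sim M^{n+1}$ (after rescaling to $\rho=1$), proves $\Gamma$ is $C^{1,1}$ in $x$ and Lipschitz in $t$ from the one-sided bounds $D^2u\leq KI$ and $\tetasemi{u}{}\leq K$ on the contact set, and then applies the area formula with the Jacobian identity $\det D_{x,t}G\Gamma=\partial_t\Gamma\det D^2\Gamma\leq K^{n+1}$; this is exactly your $\Phi$, your lower bound, and your Jacobian bound, and the technical obstacle you flag is precisely what the paper's Lemma~\ref{lemPreABPGamma} addresses.
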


Next we state several regularity results. We will use the following interior H\"older gradient estimate  \cite[Theorems 4.8 and 4.9]{WangII}.

\begin{thm}
\label{thm:Wang's c alpha thm}
Assume  (F\ref{ellipticity}) and (F\ref{last prop of F}). There exist universal constants $\alpha\in (0,1)$ and $C$ such that if $u\in C(Q_1)$  is a solution of $u_t-F(D^2u)=0$ in $Q_1$, then $u_t\in C^{0,\alpha}(Q_1)$ and $Du\in C^{0,\alpha}(Q_1)$  with
\[
||Du||_{C^{0,\alpha}(Q_{1/2})} + ||u_t||_{C^{0,\alpha}(Q_{1/2})} \leq C(\linfty{u}{Q_1} + 1).
\]
\end{thm}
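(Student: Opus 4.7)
The plan is to follow the parabolic Krylov--Safonov machinery adapted to viscosity solutions, in two stages: first a $C^{0,\alpha_0}$ estimate for functions in the Pucci class, and then a bootstrap via difference quotients using the translation invariance of $F$. Throughout, one scales by $(\|u\|_{L^\infty(Q_1)}+1)^{-1}$ to reduce to the normalized case.

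First I would establish the parabolic Harnack inequality for viscosity solutions of the two-sided Pucci inclusion $\mathcal{M}^-_{\lambda,\Lambda}(D^2 v)\leq v_t \leq \mathcal{M}^+_{\lambda,\Lambda}(D^2 v)$. The ingredients are: the contact-set estimate of Proposition \ref{pre ABP} applied at all dyadic scales, a parabolic Calder\'on--Zygmund decomposition on stacked cylinders (to propagate measure information forward in time), and the resulting $L^{\varepsilon}$--type decay for supersolutions. Combining these with a pointwise growth lemma yields the parabolic Harnack inequality, and then the standard oscillation--decay iteration gives $\operatorname{osc}_{Q_{r/2}}v \leq \theta\, \operatorname{osc}_{Q_r}v$ with $\theta<1$ universal, hence a $C^{0,\alpha_0}$ estimate in $Q_{1/2}$ for any $v$ in the Pucci class.

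Next I would exploit the translation invariance of $F$ to upgrade to $C^{1,\alpha}$ in $x$ and $C^{0,\alpha}$ in $t$. For a unit vector $e\in\rr^n$ and small $h>0$, the translate $u^h(x,t)=u(x+he,t)$ is again a viscosity solution, and uniform ellipticity (F\ref{ellipticity}) gives that the difference quotient $v_h=(u^h-u)/h$ lies in the Pucci class in the viscosity sense:
\[
\mathcal{M}^-_{\lambda,\Lambda}(D^2 v_h) \;\leq\; (v_h)_t \;\leq\; \mathcal{M}^+_{\lambda,\Lambda}(D^2 v_h).
\]
The proof of this inclusion is where the main technical subtlety sits, since sums and differences of viscosity solutions are not automatically admissible as test objects; the standard fix is to pass to sup/inf convolutions of $u$ (which are semiconvex/semiconcave and give a.e.\ pointwise control of $D^2$), derive the Pucci bounds there, and then pass to the limit using stability of viscosity solutions. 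Applying the first stage to $v_h$ on $Q_{3/4}$ gives $[v_h]_{C^{0,\alpha_0}(Q_{1/2})}\leq C(\|u\|_{L^\infty(Q_1)}+1)$ uniformly in $h$, which on letting $h\to 0$ yields $Du\in C^{0,\alpha}(Q_{1/2})$ with the claimed bound. The same argument with the time shift $u^\tau(x,t)=u(x,t+\tau)$ produces the $C^{0,\alpha}$ bound for $u_t$.

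The hard part is the first stage: the parabolic ABP/measure estimate and the stacked-cylinder covering machinery underlying the $L^\varepsilon$ decay are considerably more delicate than their elliptic analogues because of the asymmetric scaling and the one-sided, causal nature of parabolic test sets (only the backward cylinder $Y^-_r$ is admissible). Once the Harnack inequality for the Pucci class is in hand, the translation--invariance bootstrap of the second stage is essentially formal, and in particular produces a single universal exponent $\alpha$ governing both $Du$ and $u_t$ by taking the smaller of the two exponents obtained.
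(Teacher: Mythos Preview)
The paper does not give its own proof of this statement; it is quoted as a known interior regularity result from Wang \cite[Theorems 4.8 and 4.9]{WangII}, so there is no argument in the paper to compare against. Your outline is essentially the strategy of Wang's papers \cite{WangI,WangII}: a parabolic Krylov--Safonov $C^{0,\alpha_0}$ estimate for the Pucci class, followed by a difference-quotient bootstrap using that $F$ is independent of $(x,t)$.

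One genuine gap in the sketch: when you form $v_h=(u(\cdot+he,\cdot)-u)/h$ and apply the $C^{0,\alpha_0}$ estimate to it, you need $\|v_h\|_{L^\infty}$ to be bounded uniformly in $h$, i.e.\ you need $u$ to already be Lipschitz in $x$. The first-stage estimate only gives $u\in C^{0,\alpha_0}$, so dividing by $h$ straight away is not justified. The standard remedy (and what Wang does) is to start with the quotients $(u(\cdot+he,\cdot)-u)/h^{\alpha_0}$, which \emph{are} uniformly bounded by the $C^{0,\alpha_0}$ estimate; applying the Pucci $C^{0,\alpha_0}$ estimate to these yields $u\in C^{0,2\alpha_0}$ in $x$, and one iterates finitely many times until the exponent crosses $1$, obtaining Lipschitz regularity in $x$. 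Only then can you divide by $h$ and conclude $Du\in C^{0,\alpha}$. The same caveat applies in the time direction: one must first bootstrap the H\"older exponent in $t$ before dividing by $\tau$ to reach $u_t\in C^{0,\alpha}$. With this iteration inserted, your plan is correct.
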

The following proposition follows from Theorem \ref{thm:Wang's c alpha thm} by a standard rescaling and covering argument.
\begin{prop}
\label{prop:diff of soln}
Assume  (F\ref{ellipticity}) and (F\ref{last prop of F}). Assume that  $V$ is a subset of $\Omega \times (0,T)$ with $d(V, \bdry \Omega \times (0,T))>r$. There exist universal constants $\alpha\in (0,1)$ and $C$ such that if $u\in C(\Omega\times (0,T))$  is a solution of $u_t-F(D^2u)=0$ in $\Omega\times (0,T)$, then
\begin{align*}
r^{1+\alpha}\alphasemi{D u}{V} + r\linfty{D u}{V} &\leq C(\linfty{u}{\Omega \times (0,T)} + 1)\\
r^{2+\alpha}\alphasemi{u_t}{V} + r^2\linfty{u_t}{V} &\leq C(\linfty{u}{\Omega \times (0,T)} + 1).
\end{align*}
\end{prop}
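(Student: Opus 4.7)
The plan is the standard rescaling-and-covering argument needed to promote Theorem \ref{thm:Wang's c alpha thm} (which is stated on the unit cube $Q_1$) to an estimate on an arbitrary set $V$ at parabolic distance $r$ from the boundary.

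First I would fix $(x_0, t_0) \in V$. Because $d(V, \partial_p(\Omega\times(0,T))) > r$ in the parabolic metric, there exists a universal $c = c(n) > 0$ so that $Q_{cr}(x_0,t_0) \subset \Omega\times(0,T)$ (the parabolic diameter of $Q_{cr}$ is a fixed multiple of $cr$). I would then rescale by setting $v(y,s) = u(x_0 + cry,\, t_0 + c^2 r^2 s)$ on $Q_1$, so that $v$ is a viscosity solution of $v_s - G(D^2 v) = 0$ with $G(X) = c^2 r^2 F(X/(c^2 r^2))$. Since $G$ is uniformly elliptic with the same constants $\lambda,\Lambda$ and satisfies $G(0)=0$, Theorem \ref{thm:Wang's c alpha thm} applies to $v$ and yields
\[
[Dv]_{C^{0,\alpha}(Q_{1/2})} + \|Dv\|_{L^\infty(Q_{1/2})} + [v_s]_{C^{0,\alpha}(Q_{1/2})} + \|v_s\|_{L^\infty(Q_{1/2})} \leq C(\|v\|_{L^\infty(Q_1)} + 1) \leq C(\|u\|_{L^\infty(\Omega\times(0,T))} + 1).
\]

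Next I would translate back using the scaling identities $Dv(y,s) = cr\,Du$, $v_s(y,s) = c^2 r^2\, u_t$, together with the fact that the parabolic distance rescales as $cr$ (so its $\alpha$-th power picks up $(cr)^\alpha$). Absorbing $c$ into the universal constant, this gives the local estimates
\[
r^{1+\alpha}[Du]_{C^{0,\alpha}(Q_{cr/2}(x_0,t_0))} + r\|Du\|_{L^\infty(Q_{cr/2}(x_0,t_0))} \leq C(\|u\|_{L^\infty(\Omega\times(0,T))} + 1),
\]
and the analogue for $u_t$ with weights $r^{2+\alpha}$ and $r^2$ coming from the different parabolic scaling of the time derivative.

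Finally, to pass from local estimates on each $Q_{cr/2}(x_0,t_0)$ to a single estimate on $V$, the $L^\infty$ bounds are immediate: each $(x,t) \in V$ is the center of its own cube, so the pointwise value of $Du$ (resp.\ $u_t$) is controlled. For the H\"older seminorms I would split by the parabolic distance $d = d((x_1,t_1),(x_2,t_2))$ between two points of $V$: if $d \geq cr/4$, the desired inequality follows crudely from the $L^\infty$ estimate on $V$ already established; if $d < cr/4$, centering the cube at whichever of the two points has the larger time places both points in the same $Q_{cr/2}$, so the local H\"older bound applies directly. The only care needed is that $Q_r(x,t)$ is a \emph{backward} cylinder, so the covering must respect the time ordering — this is handled automatically by choosing the center at the later time. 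I do not anticipate any substantive obstacle; the only nontrivial ingredient is tracking how the parabolic metric rescales (as $cr$ rather than $r$), which is precisely what produces the $r^{1+\alpha}$ and $r^{2+\alpha}$ weights in the conclusion.
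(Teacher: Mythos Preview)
Your proposal is correct and is precisely the ``standard rescaling and covering argument'' the paper invokes; the paper gives no further details beyond citing Theorem~\ref{thm:Wang's c alpha thm}. Your handling of the backward-in-time nature of $Q_r$ and the tracking of the parabolic scaling exponents are exactly what is needed.
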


\section{The regularity result}
\label{section: regularity}
In this section, we establish the parabolic version of the regularity result  \cite[Theorem A]{Approx schemes}, which says that outside of sets of small measure, solutions of uniformly elliptic equations have second-order expansions with controlled error.  

\begin{defn}
Given $u\in C(Y_1)$, we define the set $\Psi_M (u,Y_1)\subset Y_1$ by
\begin{equation*}
\begin{split}
\Psi_M (u,Y_1) =\{
 &(x,t) \in Y_1  :\text{ there exists }  P\in \PP \text{ such that for all }(y,s)\in Y_1\cap \{s\leq t\},
\\ &|u(y,s) - u(x,t) - P(y,s)|\leq  \\
&\   \   \leq nM (|x-y|^3 + |x-y|^2|t-s| +|x-y||t-s|+|t-s|^2)
\}.
\end{split}
\end{equation*}
\end{defn}
In order to state our result, we introduce another family of subsets of $\rr^{n+1}$. We define
\[
K_r(x,t)=\left[x-\frac{r}{9\sqrt{n}}, x+\frac{r}{9\sqrt{n}}\right]^n \times\left(t , t+ \frac{r^2}{81n}\right].
\]
We denote $K_r(0,0)$ by $K_r$.

We  prove the following growth estimate for $|\Psi_M(u,Y_1)|$: 

\begin{thm}
\label{Thm A}
Assume  $F$ satisfies (F\ref{ellipticity}) and (F\ref{last prop of F}). Assume $u$ is a solution of  $u_t-F(D^2u)=0$ in  $Y_1$ and  $Du$ and $u_t$ exist and are continuous in $Y_1$. There exist universal constants $C$, $M_0$ and $\sigma$ such that for every $M \geq M_0$,
\begin{equation}
\label{eq:size bad set}
| K_1\setminus \Psi_M(u,Y_1)| \leq CM^{-\sigma}\left(\linfty{D u}{Y_1}^{\sigma} +\linfty{u_t}{Y_1}^{\sigma}\right).
\end{equation}
\end{thm}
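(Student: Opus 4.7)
The plan is to adapt the proof of the elliptic Theorem~A from Caffarelli-Souganidis~\cite{Approx schemes} to the parabolic setting, with Proposition~\ref{pre ABP} playing the role that the elliptic Alexandroff-Bakelman-Pucci estimate plays in that argument. The complement of $\Psi_M(u,Y_1)$ consists of points where $u$ fails to admit a third-order parabolic expansion by a polynomial in $\mathcal{P}$, and the strategy is to show that such points are scarce by iterating the ABP-type inequality (\ref{cons of ABPbd}) over dyadic parabolic scales.

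First I would normalize: a parabolic rescaling $v(x,t) = u(\mu x, \mu^2 t)/\mu^2$, with $\mu$ chosen in terms of $\linfty{Du}{Y_1}$ and $\linfty{u_t}{Y_1}$ (both of which are already controlled by Proposition~\ref{prop:diff of soln}), reduces the problem to a solution whose first derivatives are bounded by a universal constant, and this is where the factor $(\linfty{Du}{Y_1}^{\sigma}+\linfty{u_t}{Y_1}^{\sigma})$ in (\ref{eq:size bad set}) enters. The central quantitative claim is then: if $(\bar x,\bar t) \notin \Psi_M$, at some dyadic parabolic scale $\rho_k = 2^{-k}$ the function $u$ cannot be sandwiched between translates $P\pm cM\rho_k^3$ on the backward cylinder $Y^-_{\rho_k}(\bar x,\bar t)$ for any $P\in\mathcal{P}$.

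Next I would apply Proposition~\ref{pre ABP} scale by scale. The equation $u_t - F(D^2u) = 0$, together with (F\ref{ellipticity}), gives the Pucci inequalities $\mathcal{M}^-_{\lambda,\Lambda}(D^2 u) \le u_t \le \mathcal{M}^+_{\lambda,\Lambda}(D^2 u)$, which supply exactly the distributional Hessian bound $D^2(\pm(u-P)) \le K I$ and the Lipschitz-in-$t$ bound $\tetasemi{\pm(u-P)}{Y^-_{\rho_k}}\le K$ needed as hypotheses for Proposition~\ref{pre ABP}. Its conclusion (\ref{cons of ABPbd}) bounds the measure of the contact set with the lower monotone envelope at scale $\rho_k$, and combined with a Vitali-type covering inside $K_r(\bar x,\bar t)$ yields a geometric decay $|A_k \cap K_r(\bar x,\bar t)| \le (1-\theta)|A_{k-1}\cap K_r(\bar x,\bar t)|$ for the set $A_k$ of points whose expansion fails at scale $\rho_k$. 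Summing this geometric series, or equivalently matching $M$ to $2^{k}$, produces the desired power-law estimate.

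I expect the main obstacle to be the bookkeeping for the three distinct error terms $|x-y|^3$, $|x-y|^2|t-s|$, and $|t-s|^2$ in the definition of $\Psi_M$: when passing from one dyadic scale to the next, the approximating polynomial in $\mathcal{P}$ must be updated in its linear, quadratic, linear-in-$t$, and mixed $a\cdot xt$ coefficients, and the cumulative error in each piece must remain consistent with the parabolic distance $d((x,t),(y,s))^2 = |x-y|^2+|t-s|$. Verifying that the mixed term $a\cdot xt$ in the definition of $\mathcal{P}$ absorbs precisely the cross-derivative error $|x-y|^2|t-s|$ generated by the parabolic iteration --- a feature with no analog in the purely elliptic argument of \cite{Approx schemes} --- is the most delicate point; once this is handled, the ABP step and the covering iteration follow a familiar pattern.
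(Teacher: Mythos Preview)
Your proposal takes a route that is both different from the paper's and contains a genuine gap.

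The paper does not iterate Proposition~\ref{pre ABP} at dyadic scales. Instead, it observes that the derivatives $u_{x_1},\dots,u_{x_n}$ and $u_t$ each satisfy the Pucci inequalities (Proposition~\ref{diff eqn}), and then applies Wang's \emph{first-order} good-set estimate (Theorem~\ref{thm:Wang's good set thm}) to each of them. This yields, at every point of the intersection $G_M=\bigcap_i G_M(u_{x_i},Y_1)\cap G_M(u_t,Y_1)$, first-order expansions of all the $u_{x_i}$ and of $u_t$ with controlled error. Integrating these expansions along the segment from $(x,t)$ to $(y,s)$ produces the desired polynomial $P\in\mathcal{P}$; the mixed term $a\cdot xt$ that worried you arises naturally as $\tfrac12 p^{n+1}\cdot(y-x)(s-t)$ from the integration of the expansion of $u_t$. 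The measure bound (\ref{eq:size bad set}) then follows immediately from the union bound on the complements of the $G_M$-sets.

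The gap in your approach is the sentence claiming that the Pucci inequalities ``supply exactly the distributional Hessian bound $D^2(\pm(u-P))\le KI$.'' They do not. A bound of the form $\mathcal{M}^-(D^2u)\le u_t\le\mathcal{M}^+(D^2u)$ controls only certain combinations of the eigenvalues of $D^2u$, not the Hessian itself; a solution of $u_t-F(D^2u)=0$ need not be semiconcave (indeed, $C^{1,1}$ regularity fails in general without convexity of $F$). Consequently the hypotheses of Proposition~\ref{pre ABP} are not available for $u-P$, and your iteration cannot start. Proposition~\ref{pre ABP} is used elsewhere in the paper precisely in situations where semiconcavity has been \emph{manufactured} via inf/sup convolution; it is not the parabolic ABP ingredient that underlies Wang's $W^{2,\varepsilon}$-type estimate. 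Even if you replaced it by the correct ABP for supersolutions, you would essentially be re-deriving Theorem~\ref{thm:Wang's good set thm} rather than proving Theorem~\ref{Thm A}; the passage from first-order to second-order expansions still requires the differentiation step that the paper performs.
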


\begin{rem}
If $u$ is a solution of $u_t-F(D^2u)=0$ in $\Omega\times (0,T)$ and $Y_1$ is compactly contained in $\Omega\times (0,T)$, then $u$ satisfies the hypotheses of Theorem \ref{Thm A}  because, according to Proposition \ref{prop:diff of soln}, $Du$ and $u_t$ exist and are continuous in $Y_1$.
\end{rem}
\begin{rem}
 Suppose $(x,t)\in \Psi_M (u,Y_1)$ and $P$ is the paraboloid given by the definition of $\Psi_M (u,Y_1)$. If $u$ is a viscosity solution of $u_t-F(D^2u)=0$ in $Y_1$ , then of course $P_t(x,t) - F(D^2P(x,t))=0$. On the other hand, suppose $u$ is only a $\delta$-solution of $u_t-F(D^2u)=0$ in $Y_1$, and $(x,t)\in \Psi_M (u,Y_1)$ with $Y^-_\delta(x,t)\subset Y_1$. In this case we \emph{cannot} conclude that $P_t(x,t) - F(D^2P(x,t))=0$. This is because in the definition of $\Psi_M (u,Y_1)$ we consider paraboloids $P$ in the class $\PP$, while in the definition of $\delta$-solution we allow only paraboloids in the smaller class $\PPi$. In Corollary \ref{cor} we establish a version of Theorem \ref{Thm A} with $P\in \PPi$ instead of $P\in\PP$.
\end{rem}

An essential element of our proof is \cite[Theorem 4.11]{WangI}, which says that solutions of uniformly parabolic equations have \emph{first} order expansions with controlled error on large sets (see also \cite[Section 3]{Daniel}). To state this result, we recall the following definitions:
\begin{align*}
\overline{G}_M (u, Y_1)  = \{ &(x,t) \in Y_1 :\text{ there exists }P\in\PP^+_M \\& \text{ with } P(x,t)=u(x,t) \text{ and $P \geq u$ on $Y_1\cap \{s \leq t\}$} \}, 
\\ \underline{G}_M (u, Y_1) & = \overline{G}_M (-u, Y_1), \text{ and } \\G_M(u, Y_1) &=  \left. \overline{G}_M (u, Y_1) \cap \underline{G}_M(u, Y_1). \right.
\end{align*}
We observe that if $(x,t)\in G_M(u,Y_1)$, then there exists  $p\in \rr^n$ such that for all $(y,s)\in Y_1\cap \{s\leq t\}$,
\begin{equation*}
|u(y,s) - u(x,t) - p\cdot (y-x) | \leq \frac{1}{2}M|x-y|^2+M|t-s|.
\end{equation*}
We now present \cite[Theorem 4.11]{WangI}, with notation adapted for our setting:
\begin{thm}
\label{thm:Wang's good set thm}
Assume  (F\ref{ellipticity}) and (F\ref{last prop of F}). Assume that $v$ is a solution of 
\begin{equation}
\label{class S}\left\{
\begin{array}{l l}
v_t- \mathcal{M}^-(D^2v) \geq 0, &\quad \\
v_t- \mathcal{M}^+(D^2v) \leq 0 &\quad
\end{array}\right.
\end{equation}
in $Y_1$. There exist universal constants $C$, $\sigma$, and $M_0$ such that for any $M \geq M_0$,
\[
|K_1 \setminus G_M(v, Y_1)| \leq C M^{-\sigma} (\linfty{v}{Y_1})^{\sigma}.
\]
\end{thm}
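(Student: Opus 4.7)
The plan is to deduce the second-order expansion of $u$ at points of $\Psi_M(u,Y_1)$ from a \emph{first}-order expansion of each of the derivatives $u_{x_1},\dots,u_{x_n},u_t$, for which Theorem~\ref{thm:Wang's good set thm} applies. Under (F\ref{ellipticity}), difference quotients of $u$ in any direction satisfy the Pucci inequalities (\ref{class S}); since by hypothesis $Du$ and $u_t$ exist and are continuous in $Y_1$, a limiting argument shows that each $u_{x_i}$ and $u_t$ is itself a viscosity solution of (\ref{class S}) in $Y_1$. Applying Theorem~\ref{thm:Wang's good set thm} to each then yields sets $G_M^{(i)} := G_M(u_{x_i},Y_1)$ for $i=1,\dots,n$ and $G_M^{(0)} := G_M(u_t,Y_1)$ whose complements in $K_1$ have measure at most $CM^{-\sigma}\linfty{u_{x_i}}{Y_1}^{\sigma}$ and $CM^{-\sigma}\linfty{u_t}{Y_1}^{\sigma}$, respectively (reading the $M^\sigma$ in the statement as $M^{-\sigma}$, as required for the bad set to shrink in $M$).

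Next, I would fix a point $(x,t)$ in the intersection $\bigcap_{i=0}^{n} G_M^{(i)}$ and construct the required polynomial $P\in\PP$ by integrating the first-order expansions of $Du$ and $u_t$ along the L-shaped path $(x,t)\to(y,t)\to(y,s)$, via
\[
u(y,s)-u(x,t) = \int_0^1 Du(x+\tau(y-x),t)\cdot(y-x)\,d\tau + \int_t^s u_t(y,r)\,dr.
\]
Substituting the good-set expansions (valid on $\{s\leq t\}$) into the integrands produces the candidate
\[
P(y,s) = Du(x,t)\cdot(y-x) + u_t(x,t)(s-t) + p^{(0)}\cdot(y-x)(s-t) + (y-x)^T Q (y-x),
\]
where the rows of $2Q$ are the vectors $p^{(i)}$ supplied by the first-order expansion of $u_{x_i}$. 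Since $v^T A v = v^T\tfrac{A+A^T}{2}v$ for any matrix $A$ and vector $v$, replacing $Q$ by its symmetrization leaves the polynomial unchanged while placing it in $\PP$. Combining the remainder bounds $\tfrac{M}{2}|y-x|^2$ (from each $u_{x_i}$, since $r=t$ in the first integrand) with $\tfrac{M}{2}|y-x|^2+M|r-t|$ (from $u_t$) and integrating produces a cubic error of the form $CM(|y-x|^3 + |y-x|^2|t-s| + |t-s|^2)$ with $C$ depending only on $n$, so $(x,t)\in\Psi_{CM}(u,Y_1)$.

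The estimate for $|K_r(\bar x,\bar t)\setminus\Psi_{CM}(u,Y_1)|$ then follows from the union bound
\[
K_r(\bar x,\bar t)\setminus\Psi_{CM}(u,Y_1)\subset \bigcup_{i=0}^{n}\bigl(K_r(\bar x,\bar t)\setminus G_M^{(i)}\bigr),
\]
combined with a standard rescaling/covering to pass Theorem~\ref{thm:Wang's good set thm} from $K_1$ to $K_r(\bar x,\bar t)$, and a relabeling of $CM$ as $M$ to absorb the universal factor into $M_0$. The main obstacle will be the integration step: three distinct error types ($|y-x|^3$, $|y-x|^2|t-s|$, and $|t-s|^2$) arise from different combinations of the remainders in the first-order expansions of $Du$ and $u_t$, and one must verify that the symmetrization of $Q$ does not worsen any of these bounds. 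Making Step~1 rigorous---so that the merely continuous derivatives of $u$ genuinely solve (\ref{class S}) in the viscosity sense---is the other delicate point, typically handled via stability of viscosity solutions under uniform limits of difference quotients.
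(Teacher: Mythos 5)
Your proposal does not prove the statement in question. The statement to be established is Theorem \ref{thm:Wang's good set thm} itself: the measure estimate $|K_1\setminus G_M(v,Y_1)|\leq CM^{-\sigma}\linfty{v}{Y_1}^{\sigma}$ for a function $v$ satisfying the two Pucci extremal inequalities (\ref{class S}). What you have written is instead an argument for Theorem \ref{Thm A} (the second-order expansion result for solutions of $u_t-F(D^2u)=0$), and it takes Theorem \ref{thm:Wang's good set thm} as an ingredient --- you explicitly apply it to each $u_{x_i}$ and to $u_t$ in your first step. Read as a proof of Theorem \ref{thm:Wang's good set thm}, the argument is circular. (As it happens, the paper does not prove this theorem either: it is quoted as a known result, namely \cite[Theorem 4.11]{WangI}, and your outline essentially reproduces the paper's proof of Theorem \ref{Thm A}, which is a different statement.)

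An actual proof of Theorem \ref{thm:Wang's good set thm} requires entirely different machinery: it is the parabolic analogue of the $W^{2,\ep}$-type estimate for functions in the Pucci class, and is obtained in \cite{WangI} via the parabolic Alexandroff--Bakelman--Pucci estimate, explicit barrier constructions to show that the sets $\overline{G}_M$ and $\underline{G}_M$ capture a fixed fraction of measure at every scale, and a Calder\'on--Zygmund/stacked-covering iteration that converts this into power decay of $|K_1\setminus G_M|$ in $M$. None of these elements appears in your proposal. If your goal was Theorem \ref{Thm A}, your outline is sound and matches the paper's strategy (differentiate the equation via Proposition \ref{diff eqn}, apply Theorem \ref{thm:Wang's good set thm} to the derivatives, integrate the first-order expansions to build $P\in\PP$, and take a union bound); but for the statement actually posed, the key ideas are missing.
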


The main idea of the proof of Theorem \ref{Thm A} is to apply the growth estimate  of Theorem \ref{thm:Wang's good set thm} to the derivatives of $u$. Formally, upon differentiating $u_t-F(D^2u)=0$, we obtain that the derivatives $u_{x_i}$ and $u_t$ solve the linear parabolic equation $\partial_t u_{x_i} - \tr (DF \cdot D^2u_{x_i})=0$. Therefore, the estimates of Theorem \ref{thm:Wang's good set thm} apply to $u_{x_i}$ and $u_t$, and from this the estimates on $u$ are deduced. To carry out this plan, we will first use the following proposition \cite[Theorem 4.6]{WangII}.
\begin{prop}
\label{diff eqn}
Assume that $u$ is a solution of $u_t-F(D^2u)=0$ in $Y_1$. Then $u_t$ and $u_{x_i}$, for $i=1,..,n$, satisfy (\ref{class S}) in $Y_1$.
\end{prop}
Next we proceed with the proof of Theorem \ref{Thm A}. We remark that a result similar to \cite[Theorem A]{Approx schemes} was established later, via similar methods, by Armstrong, Silvestre and Smart \cite[Section 5]{ArmSmartSilv}. Our proof is based on the arguments in  \cite[Theorem A]{Approx schemes} and our presentation follows that of \cite[Section 5]{ArmSmartSilv}.

\begin{proof}[Proof of Theorem \ref{Thm A}]

By Proposition \ref{diff eqn},  the derivatives $u_{x_i}$ for $i=1,..., n$ and $u_t$ satisfy (\ref{class S}) in $Y_1$. Therefore, by Theorem \ref{thm:Wang's good set thm}, for any $M\geq M_0$ the size of the ``bad sets" of the $u_{x_i}$ and of $u_t$ are controlled: we have
\begin{equation}
\label{bd size sets}
\begin{split}
|K_1 \setminus G_{M}(u_{x_i}, Y_1)| \leq C {M}^{-\sigma} \linfty{Du}{Y_1} ^{\sigma} \text{ for  } i=1,..,n,\\
|K_1 \setminus  G_{M}(u_{t}, Y_1)| \leq C {M}^{-\sigma} \linfty{u_t}{Y_1}^{\sigma},
\end{split}
\end{equation}
where $C$ is a universal constant. We define the set $G_M$ to be the intersection of the  ``good sets"  of the derivatives of $u$: 
\[
G_M=\cap_{i=1}^{n} G_{M}(u_{x_i}, Y_1) \cap G_{M}(u_{t}, Y_1).
\]
The estimates of (\ref{bd size sets}) imply the following bound for the size of complement of $G_M$:
\begin{equation}
\label{sizeGm}
|K_1\setminus G_M|\leq   C M^{-\sigma} \left(\linfty{Du}{Y_1}^{\sigma} + \linfty{u_t}{Y_1}^{\sigma}\right),
\end{equation}
where $C$ is a universal constant. To prove that the desired upper bound (\ref{eq:size bad set}) on the size of the complement of $\Psi_M(u,Y_1)$ holds, it will be enough to establish the following relationship between $G_M$ and $\Psi_M(u,Y_1)$:
\begin{equation}
\label{GminPsi}
(G_M\cap K_1)\subset (\Psi_M(u,Y_1)\cap K_1).
\end{equation}
Indeed, we may use  (\ref{GminPsi}) and then the estimate  (\ref{sizeGm}) to obtain: 
\[
|K_1\setminus \Psi_M(u,Y_1)| \leq |K_1\setminus G_M|\leq  C M^{-\sigma} \left(\linfty{Du}{Y_1}^{\sigma} + \linfty{u_t}{Y_1}^{\sigma}\right).
\]
Let us now prove that (\ref{GminPsi}) holds. To this end, fix $(x,t) \in G_M \cap K_1$. The goal is to show  $(x,t) \in \Psi(u,Y_1)\cap K_1$; in other words, to produce a polynomial $P$ that is the second order expansion of $u$ at $(x,t)$. 

Because $(x,t) \in G_M$, there exist $p^1,...,p^{n+1}\in \rr^n$  such that for $i=1,..., n$ and for all $(y,s)\in Y_1\cap \{s\leq t\}$,
\begin{equation}
\label{eq:u_xi -p^i}
\begin{split}
|u_{x_i}(y,s) - u_{x_i} (x,t) - p^i\cdot (y-x) | &\leq \frac{1}{2}M|x-y|^2+M|t-s|, \text{ and}\\
|u_t(y,s) - u_t (x,t) - p^{n+1}\cdot (y-x) | &\leq \frac{1}{2}M|x-y|^2+M|t-s|.
\end{split}
\end{equation}
We point out that $D^2u(x,t)=(p^1,...,p^n)$ and $Du_t(x,t)=p^{n+1}$. We define  the $n\times n$ matrix $Q$ by $Q_{ij}=p_i^j$ and define the paraboloid $P\in \PP$ by
\begin{equation}
\label{defnP}
P(y,s) =  Du(x,t)\cdot (y-x) +\frac{1}{2}(y-x)\cdot Q(y-x)^T + (s-t)u_t(x,t)+\frac{1}{2}p^{n+1}\cdot (y-x) (s-t).
\end{equation}
We will now show that this polynomial $P$ is  the second order expansion of $u$ at $(x,t)$; more precisely, we will prove
\begin{equation}
\label{est btwn u P}
\begin{split}
|u(y,s)-u(x,t)-P(y,s)|\leq & nM ( |x-y|^3 + |x-y|^2|t-s| +|x-y||s-t|+|t-s|^2) \\
&\text{ for all }(y,s)\in Y_1\cap\{s\leq t\} .
\end{split}
\end{equation}
Once (\ref{est btwn u P}) is established, we may conclude $(x,t) \in \Psi(u,Y_1)\cap K_1$, and the proof of the theorem will be complete. 

To prove (\ref{est btwn u P}), let us fix any $(y,s)\in Y_1\cap\{s\leq t\}$. First, we express the difference between $u(y,s)$ and $u(x,t)$ in the following way:
\[
u(y,s)-u(x,t)=\int_0^1 \frac{d}{d\tau} u(x+\tau(y-x),t + \tau(s-t)) \, d\tau.
\]
Expanding the right-hand side of the previous line in terms of $Du$ and $u_t$ we find
\begin{equation*}
\begin{split}
u(y,s) -u(x,t) = & \int_0^1  (y-x)\cdot Du(x+\tau(y-x),t + \tau(s-t))\, d\tau + \\
&\    + \int_0^1 (s-t) u_t(x+\tau(y-x),t + \tau(s-t)) d\tau.
\end{split}
\end{equation*}
Now we subtract $P(y,s)$ from both sides of the previous line. Using the definition of $P$ (equation (\ref{defnP})) and rearranging yields,
\begin{equation*}
\begin{split}
u(y,s)&-u(x,t)- P(y,s)= \\
& (y-x)\cdot \int_0^1  \left(Du(x+\tau(y-x),t + \tau(s-t))-Du(x,t)-\tau Q(y-x)^T\right)\, d\tau + \\
&+ (s-t)\int_0^1 u_t(x+\tau(y-x),t + \tau(s-t))-u_t(x,t)-\tau p^{n+1}\cdot (y-x) \, d\tau.
\end{split}
\end{equation*}
We take the absolute value of both sides of the previous line and find
\begin{equation*}
\begin{split}
|u(y,s)& -u(x,t)-P(y,s)|\leq \\
& |y-x| \int_0^1  \left(\sum_{i=1}^n  |u_{x_i}(x+\tau(y-x),t + \tau(s-t))-u_{x_i}(x,t) - \tau p^i \cdot (y-x)|^2 \, d \tau \right)^{1/2}+ \\
&+ |s-t|\int_0^1 |u_t(x+\tau(y-x),t + \tau(s-t))-u_t(x,t)-\tau p^{n+1}\cdot (y-x) |\, d\tau.
\end{split}
\end{equation*}
We  use (\ref{eq:u_xi -p^i}) to bound each term on the right-hand side of the previous line, and obtain
\begin{align*}
|u(y,s)-u(x,t)-P(y,s)|\leq  & n |y-x|\int_0^1 \frac{1}{2}M\tau^2|x-y|^2 + M\tau|t-s| \, d\tau +\\& \   \    |s-t|\int_0^1 \frac{1}{2}M\tau^2|x-y|^2 + M\tau|t-s| \, d\tau. 
\end{align*}
Integrating in $\tau$ and combining terms yields 
\[
|u(y,s)-u(x,t)-P(y,s)|\leq nM ( |x-y|^3 +|x-y||s-t|+ |x-y|^2|t-s| +|t-s|^2) .
\]
Since $(y,s)\in Y_1\cap \{s\leq t\}$ was arbitrary, we have established (\ref{est btwn u P}) and so the proof of the theorem is complete.
\end{proof}

\begin{cor}
\label{cor}
Assume  $F$ satisfies (F\ref{ellipticity}) and (F\ref{last prop of F}). Assume $u$ is a solution of  $u_t-F(D^2u)=0$ in $Y_r(\bar{x},\bar{t})$ and  $Du$ and $u_t$ exist and are continuous in $Y_r(\bar{x},\bar{t})$.
There exists a subset $\Psi_M(u,Y_r(\bar{x},\bar{t}))$ of $Y_r(\bar{x},\bar{t})$ and a universal constant $C$ such that for any $(x,t)\in \Psi_M(u,Y_r(\bar{x},\bar{t}))$, there exists $Q\in \PPi$ with
\[
Q_t-F(D^2Q)=0
\]
and such that on $Y_r(\bar{x},\bar{t})\cap \{s\leq t\}$,
\begin{equation}
\label{eq:u-p cor}
\begin{split}
 |u(y,s) - u(x,t) - Q(y,s)| \leq \frac{nM}{r^2} &( r|x-y|^3 + |x-y|^2|t-s| +|t-s|^2) +\\&+\frac{C}{r}(M+\linfty{u_t}{Y_r(\bar{x},\bar{t})})|x-y|(t-s).
 \end{split}
\end{equation}
Moreover,
\begin{equation}
\label{better bdn on Theta_M}
| K_r(\bar{x},\bar{t})\setminus \Psi_M(u,Y_r(\bar{x},\bar{t}))| \leq \frac{Cr^{n+2}}{M^{\sigma}}(r^{-\sigma}\linfty{Du}{Y_r(\bar{x},\bar{t})}^\sigma+\linfty{u_t}{Y_r(\bar{x},\bar{t})}^\sigma).
\end{equation}
\end{cor}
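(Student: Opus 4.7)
The plan is to deduce Corollary \ref{cor} from Theorem \ref{Thm A} in two steps: a parabolic rescaling from $Y_r$ to $Y_1$, which will yield the measure estimate (\ref{better bdn on Theta_M}) immediately, and an algebraic adjustment replacing the polynomial $\tilde P \in \PP$ supplied by the definition of $\Psi_M$ with a polynomial $Q \in \PPi$ that satisfies the equation identically, which will yield (\ref{eq:u-p cor}).

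For the rescaling, I would set $v(y,s) = r^{-2} u(ry, r^2 s)$ on $Y_1$. Then $v$ is again a viscosity solution of $v_t - F(D^2 v) = 0$, with $\linfty{Dv}{Y_1} = r^{-1}\linfty{Du}{Y_r}$ and $\linfty{v_t}{Y_1} = \linfty{u_t}{Y_r}$. A direct substitution shows that $(x,t) \in \Psi_M(u, Y_r)$, understood in the rescaled sense matching the right-hand side of (\ref{eq:u-p cor}), corresponds to $(x/r, t/r^2) \in \Psi_M(v, Y_1)$. Applying Theorem \ref{Thm A} to $v$ and pulling back via the Jacobian $r^{n+2}$ of the change of variables $K_1 \leftrightarrow K_r(\bar x, \bar t)$ then yields (\ref{better bdn on Theta_M}), with a translation in $(\bar x, \bar t)$ handling the general center.

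For the first part, given $(x,t) \in \Psi_M(u, Y_r)$, I take the polynomial from the definition and write it as
\[
\tilde P(y,s) = c + l\cdot y + ms + a\cdot ys + y\cdot \tilde Q y^T \in \PP.
\]
Evaluating the defining inequality at $(y,s) = (x,t)$ forces $\tilde P(x,t) = 0$. Since $u$ is a viscosity solution, the remark preceding the corollary ensures $\tilde P_t(x,t) - F(D^2 \tilde P) = (m + a\cdot x) - F(2\tilde Q) = 0$. Setting
\[
Q(y,s) = \tilde P(y,s) - a\cdot(y-x)(s-t),
\]
a direct expansion gives $Q \in \PPi$ with $Q_s \equiv m + a\cdot x$ (constant in $y,s$) and $D^2 Q = 2\tilde Q$, so $Q_t - F(D^2 Q) \equiv 0$ identically. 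The triangle inequality $|u - u(x,t) - Q| \leq |u - u(x,t) - \tilde P| + |a|\,|y-x|\,|t-s|$ then reduces (\ref{eq:u-p cor}) to the bound $|a| \leq C(M + \linfty{u_t}{Y_r})/r$.

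The main obstacle is this bound on $|a|$. The key identity is
\[
\tilde P(x+he_i, t-\tau) - \tilde P(x+he_i, t) - \tilde P(x, t-\tau) + \tilde P(x,t) = -h\tau\, a_i,
\]
which extracts $a_i$. Rewriting this via the corresponding values of $u$, applying the $\Psi_M$ inequality at each of the four points (using $\tilde P(x,t)=0$), and estimating the discrete mixed partial of $u$ by
\[
\left|\int_0^\tau \bigl[u_t(x, t-\sigma) - u_t(x+he_i, t-\sigma)\bigr]\, d\sigma\right| \leq 2\tau\linfty{u_t}{Y_r},
\]
yields $h\tau |a_i| \leq 2\tau \linfty{u_t}{Y_r} + (nM/r^2)(2rh^3 + h^2\tau + 2\tau^2)$. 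Choosing $h$ and $\tau$ of orders $r$ and $r^2$ respectively (using the slack available in $Y_r \cap \{s \leq t\}$) and dividing by $h\tau$ produces $|a_i| \leq C(M + \linfty{u_t}{Y_r})/r$ with $C$ universal, and hence the same bound for $|a|$. Combining with the previous paragraph completes the proof of (\ref{eq:u-p cor}).
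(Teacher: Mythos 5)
Your reduction of (\ref{eq:u-p cor}) to a bound on the mixed coefficient $a$ is sound: the algebra behind $Q=\tilde P-a\cdot(y-x)(s-t)$, the identity $Q_s\equiv m+a\cdot x$, the use of the remark preceding the corollary to get $Q_t-F(D^2Q)=0$, and the rescaling that yields (\ref{better bdn on Theta_M}) all check out. The gap is in the claimed bound $|a|\leq C(M+\linfty{u_t}{Y_r})/r$. Your mixed-difference identity needs the four points $(x,t)$, $(x+he_i,t)$, $(x,t-\tau)$, $(x+he_i,t-\tau)$ to lie in $Y_r\cap\{s\leq t\}$ with $|h|\sim r$ and $\tau\sim r^2$, but a point $(x,t)\in\Psi_M(u,Y_r)$ can sit arbitrarily close to the parabolic boundary of $Y_r=B_r\times(0,r^2]$. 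In particular, if $t$ is close to the initial time of the cylinder, only $\tau<t\ll r^2$ is admissible, and your inequality $|a_i|\leq 2\linfty{u_t}{Y_r}/|h|+(nM/r^2)\bigl(2rh^2/\tau+|h|+2\tau/|h|\bigr)$ degenerates: with $|h|\sim r$ the term $2nMh^2/(r\tau)$ is of order $Mr/\tau\gg M/r$, and no other choice of $(h,\tau)$ with $\tau<t$ recovers a bound independent of $t$. (A similar problem occurs spatially when $x_i$ is small and $|x|$ is close to $r$, since then $x\pm he_i\notin B_r$ for $|h|\gtrsim\sqrt{r(r-|x|)}$; this is less serious because the corollary is applied on $K_r$, but the time issue persists there, as $K_r(\bar x,\bar t)$ contains times arbitrarily close to $\bar t$.) More fundamentally, it is not clear that an \emph{arbitrary} $\tilde P\in\PP$ admissible in the definition of $\Psi_M$ has a controlled mixed coefficient at all: the defining inequality only constrains $\tilde P$ on the slab $B_r\times(0,t]$, which does not pin down $a$ when $t$ is small, so the statement you are trying to prove about your chosen $\tilde P$ may simply fail.

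The paper avoids this by not working with an arbitrary admissible polynomial. It takes the specific $P$ built in the proof of Theorem \ref{Thm A}, whose mixed coefficient is $\frac12 p^{n+1}$ with $p^{n+1}$ the spatial slope of the first-order expansion of $u_t$ at $(x,t)$, and bounds $|p^{n+1}|$ by evaluating (\ref{eq:u_xi -p^i}) at $s=t$ and $|y-x|=\frac12$ --- a purely spatial evaluation requiring no backward-in-time room. To repair your argument, either carry the explicit $P$ and $p^{n+1}$ from Theorem \ref{Thm A} through the corollary (restricting, as the paper implicitly does, to the points produced there), or show separately that among all admissible polynomials at $(x,t)$ there is one whose mixed coefficient obeys the required bound.
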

\begin{proof}
We prove the statement for $r=1$ and $(\bar{x},\bar{t})=(0,0)$. The general case follows by translation and rescaling:  the equation that $u$ satisfies is translation invariant, and given $u$ that solves $u_t - F(D^2u) = 0$
in $Y_r$, the rescaled function 
$
\hat{u}(\hat{x},\hat{t})=\frac{1}{r^2} u(r\hat{x}, r^2 \hat{t})
$
solves the same equation in $Y_1$. We omit the details of the rescaling argument and proceed with the proof in the case $r=1$ and $(\bar{x},\bar{t})=(0,0)$. 

Fix $(x,t)\in \Psi_M(u,Y_1)$. We define the paraboloid $Q\in \PPi$ by $Q(y,s)=P(y,s) - \frac{1}{2}p^{n+1}\cdot (y-x)(s-t)$, where $P$ and $p^{n+1}$ are as in the proof of Theorem \ref{Thm A}. By the definition of $Q$, we have,
\[
|u(y,s) - u(x,t) - Q(y,s)| \leq |u(y,s) - u(x,t) - P(y,s)| +\frac{1}{2}|p^{n+1}| |y-x||s-t|.
\]
We use (\ref{est btwn u P}) to bound from above the first term on the right-hand side of the previous line, and find, for all $(y,s)\in Y_1\cap \{s\leq t\}$,
\begin{equation}
\label{eqn:uQ}
\begin{split}
 |u(y,s) - u(x,t) - Q(y,s)| \leq nM&( |x-y|^3 +|x-y||s-t|+ |x-y|^2|t-s| +|t-s|^2) +\\&+\frac{1}{2}|p^{n+1}| |y-x||s-t|.
\end{split}
\end{equation}
Let us now estimate $|p^{n+1}|$. To this end, we use the second inequality in (\ref{eq:u_xi -p^i}) to obtain, for any $(y,s) \in Y_1\cap \{s\leq t\}$,
\[
|p^{n+1}\cdot (y-x)| \leq \frac{1}{2}M|x-y|^2+M|t-s|+2\linfty{u_t}{Y_1}.
\]
Let us evaluate the previous inequality at $s=t$ and  at a point $y$ such that $|x-y|=\frac{1}{2}$. This yields,
\[
|p^{n+1}|\leq \frac{1}{4}M+4\linfty{u_t}{Y_1}.
\]
We use this to bound the last term on the right-hand side of (\ref{eqn:uQ}). We also rearrange so that both  terms involving $|y-x||s-t|$ are together. This yields the bound (\ref{eq:u-p cor}) with $r=1$ and $(\bar{x},\bar{t})=(0,0)$, and so the proof is complete. 
\end{proof}

\section{Two regularizations of inf- sup- type}
\label{section: Two different inf- and sup- convolutions}
This section is devoted to introducing and establishing the properties of two regularizations of inf-sup type. In subsection \ref{subsec:infsup} we recall the definitions of inf- and sup- convolutions in both the space and time variables, which  are quite similar to those used in the regularity theory of elliptic equations, and to the regularizations of  \cite[Section 4]{WangII}. In the proof of our main result, Theorem \ref{thm:vague comparison},  we will use inf- and sup- convolutions to regularize the $\delta$-solution $u_\delta$.

In subsection \ref{subsec:xinf} we define  regularizations of inf-sup type that we call  $x$-inf and $x$-sup convolutions. We will use
them to regularize the viscosity solution $u$ in the proofs of Theorems \ref{thm:vague comparison} and  \ref{thm:mesh comparison vague}. 

In  subsection \ref{subsec:regxinf}, we prove that if $u$ is a solution of $u_t-F(D^2u)=0$ then  the $x$-inf and $x$-sup convolutions of $u$ enjoy  regularity properties similar to those established in Theorem \ref{Thm A} for $u$. This result is a very important ingredient in our proofs of Theorems \ref{thm:vague comparison} and \ref{thm:mesh comparison vague}. 
\subsection{Regularization in both the space and time variables}
\label{subsec:infsup}
We recall the definitions of inf- and sup- convolutions. 
We use the notation $v^-_{\theta, \theta}$, instead of the expected $v^-_{\theta}$, to distinguish these from the  $x$-inf- and $x$-sup- convolutions that we introduce in the next subsection.
\begin{defn}
For $v\in C(\Omega \times (0,T))$ and $\theta>0$, we define the \emph{inf-convolution} $v^-_{\theta, \theta}$ and the  \emph{sup-convolution} $v^+_{\theta, \theta}$ by
\[
v^-_{\theta, \theta}(x,t) = \inf_{\Omega\times(0,T)} \left\{ v(y,s) + \frac{|x-y|^2}{2\theta}+\frac{|t-s|^2}{2\theta}\right\}
\]
and
\[ 
v^+_{\theta, \theta}(x,t) = \sup_{\Omega\times(0,T)} \left\{ v(y,s) - \frac{|x-y|^2}{2\theta}-\frac{|t-s|^2}{2\theta}\right\}.
\]
\end{defn}
\begin{defn}
\label{defnUv}
Given $\theta>0$, $\delta>0$ and $v\in C(\Omega \times (0,T))$,  we define
\[
U^{\theta,\delta} = \left\{(x,t)\in \Omega\times(0,T):\  \   d_e((x,t),\bdry(\Omega \times (0,T))) \geq 2\theta^{1/2} \linfty{v}{\Omega \times (0,T)}^{1/2} + \delta\right\}.
\]
\end{defn}
We summarize the basic properties of inf- and sup- convolutions in Proposition \ref{properties of inf-convolution} of the Appendix.  One property that is particularly important and useful to us is that taking inf- and sup- convolution preserves the notion of $\delta$-super and $\delta$-sub solutions. We state this precisely in item (\ref{inf conv is delta soln}) of  Proposition \ref{properties of inf-convolution}.

\subsection{Regularization in only the space variable}
\label{subsec:xinf}
We now define the $x$-inf and $x$-sup convolutions.
\begin{defn}
\label{defxinf}
For $u\in C(\Omega \times (0,T))$ and $\theta>0$, we define the \emph{$x$-sup convolution} $u^+_{\theta}$ and  \emph{$x$-inf-convolution} $u^-_{\theta}$ of $u$ by
\[
u^+_{\theta}(x,t) = \sup_{y\in \Omega} \left\{ u(y,t) - \frac{|x-y|^2}{2\theta}\right\} \text{ and }   u^-_{\theta}(x,t) = \inf_{y\in \Omega} \left\{ u(y,t) + \frac{|x-y|^2}{2\theta}\right\}.
\]
\end{defn}
\begin{defn}
\label{defnUtheta}
For $u\in C^{0,1}_x(\Omega \times (0,T))$ and $\tilde{\Omega}\subseteq \Omega$ we define the subset $\tilde{\Omega}^{\theta}$ of $\tilde{\Omega}$ by
\[
\tilde{\Omega}^{\theta} = \left\{x\in \tilde{\Omega}  :\  \    \inf_{y\in \partial \tilde{\Omega} } |x-y| \geq 2\theta \linfty{Du}{\Omega \times (0,T)}\right\}.
\]
In addition, for $U=\tilde{\Omega}\times I$ where $I\subseteq(0,T)$ is an interval, we define the subset $U^\theta$ of $U$ by
\[
U^{\theta}=\tilde{\Omega}^\theta\times I.
\]
\end{defn}
Note that for the definition of $U^\theta$ to be meaningful, the size of $U$ cannot be too small compared to $\theta$. Whenever we use this notation we make sure this is not the case. 

In addition, we denote $Y^{\theta}_r(\bar{x},\bar{t}) =(Y_r(\bar{x},\bar{t}) )^{\theta}$ and $K^{\theta}_r(\bar{x},\bar{t}) =(K_r(\bar{x},\bar{t}) )^{\theta}$. We will be using these families of sets quite frequently, so  we write down their definitions explicitly for the convenience of the reader:
\begin{align*}
Y^{\theta}_r(\bar{x},\bar{t}) &= B(\bar{x}, r-2\theta\linfty{D u}{\Omega \times (0,T)})\times (\bar{t}, \bar{t}+r^2]\\
K^{\theta}_r (\bar{x},\bar{t}) &= \left[\bar{x}-\left(\frac{r}{9\sqrt{n}}-2\theta\linfty{D u}{\Omega\times(0,T)}\right), \bar{x}+\left(\frac{r}{9\sqrt{n}}-2\theta\linfty{D u}{\Omega\times(0,T)}\right)\right]^n    \times\left(\bar{t}, \bar{t}+\frac{r^2}{81n}\right].
\end{align*}

In the following proposition we state the facts about $x$-inf- and $x$-sup- convolutions that we will use in this paper. Their proofs are very similar to those in the elliptic case  (see \cite[Chapter 4]{Rates Homogen} or \cite[Proposition 5.3]{Cabre Caffarelli book}) and we omit them.  
\begin{prop}
\label{prop:x-sup-conv}
Assume $u\in C^{0,1}_x(\Omega \times (0,T))$.
Then:
\begin{enumerate}
\item  \label{item:Utheta}  If $(x^*, t)$ is any point at which the infimum (resp. supremum) is achieved in the definition of $u^-_{\theta}(x,t)$ (resp. $u^+_{\theta}(x,t)$), then 
\[|x-x^*|\leq 2\theta\linfty{Du}{\Omega \times (0,T)}.
\]
Moreover, if $(x,t)\in U^\theta$ then $(x^*, t)\in U$. 
\item  We have $u^-_{\theta}(x,t)\leq u(x,t)\leq u^+_{\theta}(x,t)$ for all $(x,t)\in \Omega\times (0,T)$.
\item 
\label{item: bound from above on sup conv}
For all $(x,t)\in \Omega^\theta\times (0,T)$, we have
\[
u^+_{\theta}(x,t) -2\theta \linfty{Du}{\Omega \times (0,T)}^2\leq u(x,t)\leq 
u^-_{\theta}(x,t)+ 2\theta \linfty{Du}{\Omega \times (0,T)}^2 .
\]
\item 
\label{item:sup conv is semiconvex}
In the sense of distributions, $D^2 u^+_{\theta}(x,t) \geq -\theta^{-1}I$ and $D^2 u^-_{\theta}(x,t) \leq \theta^{-1}I$ for all $(x,t)\in \Omega\times (0,T)$.
\item 
\label{item: utheta is subsolution} 
If  $u$ is a subsolution of
\begin{equation}
\label{eqnc}
u_t-F(D^2u)=c
\end{equation} 
in $\Omega \times (0,T)$, then $u^+_{\theta}$ is a subsolution of (\ref{eqnc}) in $\Omega^{\theta}\times (0,T)$.
If  $u$ is a supersolution of (\ref{eqnc}) in $\Omega \times (0,T)$, then $u^-_{\theta}$ is a supersolution of (\ref{eqnc}) in $\Omega^{\theta}\times (0,T)$.
\end{enumerate}
\end{prop}

\subsection{Regularity of  $x$-inf and $x$-sup convolutions}
\label{subsec:regxinf}
This subsection is devoted to the proof of Proposition \ref{prop: more regularity for convolutions}, which states that the extra regularity established  in Theorem \ref{Thm A} for a solution $u$ of $u_t-F(D^2u)=0$ carries over to $u^+_\theta$ and $u^-_\theta$. 
\begin{prop}
\label{prop: more regularity for convolutions}
Assume  (F\ref{ellipticity}) and (F\ref{last prop of F}). Suppose $u\in C(\Omega\times(0,T))$ is a solution of $u_t-F(D^2u)=0$ in $\Omega \times(0,T)$. Assume $Y_r(\bar{x},\bar{t})\subset\Omega\times(0,T)$ and 
\begin{equation}
\label{asmregdist}
d(Y_r(\bar{x},\bar{t}), \bdry(\Omega\times(0,T))) \geq \theta.
\end{equation}
There exist universal constants $M_0$, $\sigma$, and $C$ such that for every $M\geq M_0$, there exists a set $\Psi^{+,\theta}_{M} \subset K^{\theta}_r ( \bar{x},\bar{t})$  (resp. $\Psi^{-,\theta}_{M} \subset K^{\theta}_r ( \bar{x},\bar{t})$) such that for any $(x, t) \in \Psi^{+,\theta}_M$ (resp. $(x, t) \in \Psi^{-,\theta}_M$), there exists a polynomial $P \in \PPi$ such that $P(x,t)=0$,
\begin{equation}
\label{cond on P}
P_t - F(D^2P)=0,
\end{equation}  
and if $Y_\rho^-(x,t)\subset Y^\theta_r(\bar{x},\bar{t})$, then for all  $(y,s)\in Y_\rho^-(x,t)$, we have
\begin{equation}
\label{eq: touch utheta from below}
\begin{split}
u^{+}_\theta(y,s)-u^{+}_\theta(x, t) \geq P(y,s)- nM\frac{\rho}{r}|x-y|^2-\frac{\rho}{r}\left(CM+\frac{ 1+\linfty{u}{\Omega \times (0,T)}}{\theta^2}\right)(t-s)
\end{split}
\end{equation}
\begin{equation*}
\begin{split}
\text{(resp. }u^{-}_\theta(y,s)-u^{-}_\theta(x, t) \leq P(y,s)+ nM\frac{\rho}{r}|x-y|^2 +\frac{\rho}{r}\left(CM+\frac{ 1+\linfty{u}{\Omega \times (0,T)}}{\theta^2}\right)(t-s).)
\end{split}
\end{equation*}
Moreover,
\begin{equation}
\label{eq: size of bad set theta}
|K^{\theta}_r(\bar{x},\bar{t}) \setminus \Psi^{\pm,\theta}_M |  \leq   \frac{Cr^{n+2}}{M^{\sigma}\theta^{n+\sigma}}(r^{-\sigma}+\theta^{-\sigma})\left(\linfty{u}{\Omega \times (0,T)}+1\right)^{n+\sigma}.
\end{equation}
\end{prop}

\subsubsection*{Outline of the proof of Proposition \ref{prop: more regularity for convolutions}.}
We prove that if  the supremum in the definition of $u^+_\theta(x,t)$ is achieved at a point  $(x^*,t)$ that   is contained in  the ``good set" $\Psi_M(u)$ of $u$, then an estimate similar to (\ref{eq: touch utheta from below}) holds at $(x,t)$. We state this as Lemma \ref{cor:psiMx*} below. This follows from Corollary \ref{cor} and the observation that  if $u$ is touched from below at $(x^*,t)$ by some function $\phi$, then $u^+_\theta$ is touched by below at $(x,t)$ by a translate of $\phi$.  We make this observation  precise in Lemma \ref{lem:touch}, which we  use in the proof of Lemma \ref{cor:psiMx*}. 

The previous discussion indicates that $\Psi^{+,\theta}_{M}$, the ``good set" for $u^+_\theta$, should be defined as the points $(x,t)$ for which  $(x^*,t)$ is in the good set $\Psi_M(u)$ of $u$. To obtain a bound on the size of $\Psi^{+,\theta}_{M}$  we  thus consider a  map that takes $x^*$ to $x$ and investigate what happens to  the size of  $\Psi_M(u)$ under this map. For this  we need Lemma \ref{lem:Dxu lip}, which says that $u$ is twice differentiable in $x$ at any point $(x^*,t)$ where the supremum or infimum is achieved in the regularizations $u^+_\theta$ or $u^-_\theta$, and gives a two-sided estimate for $D^2u$ at such points.

\subsubsection*{Statement of lemmas that we use in the proof of Proposition \ref{prop: more regularity for convolutions}}
\begin{lem}
\label{cor:psiMx*}
Let $Y_r(\bar{x},\bar{t})\subset \Omega\times(0,T)$ and let $\Psi(u,Y_r(\bar{x},\bar{t}))$ be the set given by Corollary \ref{cor}. 
Let us suppose that the supremum in the definition of $u^+_\theta(x,t)$ is achieved at $(x^*,t)\in \Psi_M(u,Y_r(\bar{x},\bar{t}))$. Then there exists a paraboloid $P\in \PPi$ that satisfies (\ref{cond on P}) and such that, for all $(y,s)\in Y^\theta_r(\bar{x},\bar{t})\cap \{s\leq t\}$,
\begin{equation}
\label{u+below}
\begin{split}
u^+_\theta (y,s)-u^+_\theta (x,t)\geq P(y,s) -\frac{nM}{r^2} &( r|x-y|^3 + |x-y|^2|t-s| +|t-s|^2) \\&-C\frac{M+\linfty{u_t}{Y_r(\bar{x},\bar{t})}}{r}|x-y|(t-s).
\end{split}
\end{equation}
\end{lem}

\begin{lem}
\label{lem:Dxu lip}
Under the assumptions of Proposition \ref{prop: more regularity for convolutions}, let $(x,t)\in Y^\theta_r(\bar{x},\bar{t})$. There exists a universal constant $C$ such that if $x^*$ is any point where the supremum (resp. infimum) is achieved in the definition of $u^+_{\theta}(x,t)$ (resp. $u^-_{\theta}(x,t)$), then $u(y,t)$ is twice differentiable in $y$ at $x^*$ with 
\[
-C(\linfty{u}{\Omega\times(0,T)}+1)\theta^{-2} I \leq D^2u(x^*, t)\leq \theta^{-1}I 
\]
\[ \left(\text{resp. } -\theta^{-1}I\leq D^2u(x^*, t)\leq C(\linfty{u}{\Omega\times(0,T)}+1)\theta^{-2}I.\right)
\]
\end{lem}

We postpone the proof of Lemmas \ref{cor:psiMx*} and \ref{lem:Dxu lip} and proceed with:
\begin{proof}[Proof of Proposition \ref{prop: more regularity for convolutions}]
We will give the proof for $u^+_\theta$; the arguments for $u^-_\theta$ are similar.
Fix $M$ and, to simplify the notation, denote the set $\Psi_M(u, Y_r(\bar{x},\bar{t}))$ given by Corollary \ref{cor} by just $\Psi_M$. We define the set $\mathcal{C}$ by
\begin{align*}
\mathcal{C}=&\left\{(x^*,t)\text{: $(x^*,t)$ is a point at which the supremum in the definition of $u^+_\theta(x,t)$ } \right.
\\&  \   \   \   \left. \text{is achieved, for some $(x,t)\in Y^\theta_r(\bar{x},\bar{t})$}\right\}.
\end{align*}
Item (\ref{item:Utheta}) of Proposition \ref{prop:x-sup-conv} implies $\mathcal{C}\subset Y_r(\bar{x},\bar{t})$.  
For any $(x^*,t)\in \mathcal{C}$, the map
\[
y \mapsto \left( u(y,t)-\frac{|x-y|^2}{2\theta}\right)
\]
has a local maximum at $x^*$, because the supremum in the definition of $u^+_\theta(x,t)$ is achieved at $(x^*,t)$.  Hence its derivative has a zero at $x^*$. Computing  the derivative and setting it equal to zero gives,
\[
Du(x^*, t)+\frac{(x-x^*)}{\theta}=0.
\] 
Rearranging this equality yields $x=x^*-\theta Du(x^*,t)$. We define the map $T:\Omega\times (0,T)\rightarrow \rr^n\times (0,T)$ by
\[
T(y,t) = (y-\theta Du(y,t),t),
\]
so that $(x,t)=T(x^*,t)$.
By Lemma \ref{lem:Dxu lip}, $Du$ is Lipschitz on $ \mathcal{C}$; hence, the map $T$ is too. Let us denote by $D_{y,t}T$ the matrix of derivatives of $T$ in both the space and time variables, and by  $D_yT$ the matrix of derivatives of  $T$ in only the space variable. 
We will now estimate the determinant of $D_{y,t}T$ on $\mathcal{C}$. We observe that $\det D_{y,t}T=\det D_yT$. Let us fix $(y,t)\in \mathcal{C}$.  We use the definition of the map $T$ to express $D_yT$ in terms of $D^2u$:
\[
D_yT(y,t) =
  I-\theta D^2u(y,t).
\]
We use Lemma \ref{lem:Dxu lip} to bound $-\theta D^2u(y,t)$ from above and below and obtain,
\[
0 \leq 
D_yT(y,t) \leq 
  I \cdot C\theta^{-1}( \linfty{u}{\Omega\times(0,T)}+1).
\]
The left-most inequality in the previous line says $D_yT(y,t)$ is a non-negative definite matrix. Hence, taking determinants preserves inequalities, and so we find 
\begin{equation}
\label{est:detT}
0\leq \det D_{y,t}T(y,t)\leq C\theta^{-n}(\linfty{u}{\Omega\times(0,T)}+1)^n,
\end{equation}
where we have also used $\det D_yT=\det D_{y,t}T$. 

We define the set $A$ by
\[
A=T( \mathcal{C}\cap (K_r(\bar{x},\bar{t})\setminus \Psi_M)).
\]
By the Area Formula (\cite[Chapter 3]{Evans Gariepy}), we have
\[
|A|\leq \int_{\mathcal{C}\cap (K_r(\bar{x},\bar{t})\setminus \Psi_M)} |\det D_{y,t}T(y,t)|\  dy \  dt .
\]
We use the bound (\ref{est:detT}) on $\det D_{y,t}T(y,t)$ and the bound (\ref{better bdn on Theta_M}) on $|K_r(\bar{x},\bar{t})\setminus \Psi_M|$ of Corollary \ref{cor} to estimate the right-hand side of the previous line from above and obtain,
\begin{equation}
\label{size A}
|A|\leq  \frac{Cr^{n+2}}{M^{\sigma}}(r^{-\sigma}\linfty{Du}{Y_r(\bar{x},\bar{t})}^\sigma+\linfty{u_t}{Y_r(\bar{x},\bar{t})}^\sigma)\frac{(\linfty{u}{\Omega\times(0,T)}+1)^n}{\theta^n}.
\end{equation}
We define the set $\Psi_M^{+,\theta}$ by
\[
\Psi_M^{+,\theta}:=K^\theta_r(\bar{x},\bar{t})\setminus A.
\]
We will now use Lemma \ref{cor:psiMx*} to prove that for each point $(x,t)$ of $\Psi_M^{+,\theta}$ there exists $P\in \PPi$ satisfying (\ref{cond on P}) and (\ref{eq: touch utheta from below}). To this end, let us fix $(x,t)\in \Psi_M^{+,\theta}$. Let $(x^*, t)$ be any point at which the supremum in the definition of $u^+_\theta(x,t)$ is achieved. We have $(x^*,t)\in \mathcal{C}$. 
Since $(x,t)\in K^\theta_r(\bar{x},\bar{t})$, by item (\ref{item:Utheta}) of Proposition \ref{prop:x-sup-conv} we also have $(x^*, t)\in K_r(\bar{x},\bar{t})$. In addition, since $(x,t)=T(x^*,t)$, and $(x,t)\notin A$ by the definition of $\Psi_M^{+,\theta}$, we find
\[
T(x^*,t) = (x,t) \notin A = T(\mathcal{C}\cap (K_r(\bar{x},\bar{t})\setminus \Psi_M)),
\]
where the second equality follows simply by recalling the definition of $A$. Therefore, $(x^*,t)\notin \mathcal{C}\cap (K_r(\bar{x},\bar{t})\setminus \Psi_M)$. 
Since we've already shown that $(x^*, t)$ is contained in  $\mathcal{C}\cap K_r(\bar{x},\bar{t})$,  this  implies  $(x^*,t) \in \Psi_M$. Thus, we have
\begin{equation*}
(x^*,t)\in K_r(\bar{x},\bar{t})\cap\Psi_M.
\end{equation*}
Since $(x^*,t)$ is a point   at which the supremum in the definition of $u^+_\theta(x,t)$ is achieved, this means we are in exactly the situation of Lemma \ref{cor:psiMx*}. Therefore, we apply the lemma and  conclude that there exists a paraboloid $P\in \PPi$ satisfying (\ref{cond on P}) and such that the inequality (\ref{u+below}) holds for all $(y,s)\in Y^\theta_r(\bar{x},\bar{t})\cap \{s\leq t\}$. 
Finally, if the backward cylinder $Y_\rho^-(x,t)$ is contained in $Y^\theta_r(\xbar, \tbar)$, then for any $(y,s)\in Y_\rho^-(x,t)$ we have $(y,s)\in Y^\theta_r(\bar{x},\bar{t})\cap \{s\leq t\}$ and so the inequality (\ref{u+below}) holds at $(y,s)$. Moreover, we have $|x-y|\leq \rho$ and $|t-s|\leq \rho^2$. Using this to bound the right-hand side of (\ref{u+below}) from below, we obtain, for all $(y,s)\in  Y_\rho^-(x,t)$,
\begin{equation*}
\begin{split}
u^+_\theta (y,s)-u^+_\theta (x,t)\geq P(y,s) -\frac{nM}{r^2} &( r\rho |x-y|^2  +\rho^2|t-s|) \\&-C\rho\frac{M+\linfty{u_t}{Y_r(\bar{x},\bar{t})}}{r}(t-s).
\end{split}
\end{equation*}
By assumption we have $\rho\leq r$; therefore, we have $\frac{\rho^2}{r^2}\leq \frac{\rho}{r}$. We use this to estimate the right-hand side of the previous line from below, and rearrange so that all the terms with $(t-s)$ are together, to obtain, for all $(y,s)\in  Y_\rho^-(x,t)$,
\begin{equation}
\label{touch below no bd}
\begin{split}
u^+_\theta (y,s)-u^+_\theta (x,t)\geq & P(y,s) -nM\frac{\rho}{r}|x-y|^2 -\frac{\rho}{r}\left(CM+\linfty{u_t}{Y_r(\bar{x},\bar{t})}\right)(t-s).
\end{split}
\end{equation}
Since, by assumption, $d(Y_r(\bar{x},\bar{t}), \bdry \Omega\times(0,T))\geq \theta$,  Proposition \ref{thm:Wang's c alpha thm} implies 
\[
\linfty{u_t}{Y_r(\bar{x},\bar{t})} \leq \theta^{-2}(\linfty{u}{\Omega\times(0,T)}+1)
\]
and
\[
\linfty{Du}{Y_r(\bar{x},\bar{t})} \leq \theta^{-1}(\linfty{u}{\Omega\times(0,T)}+1).
\]
We use these bounds  in (\ref{size A}) and (\ref{touch below no bd}) to complete the proof.
\end{proof}

We now give the proofs of the two lemmas, starting with  Lemma \ref{lem:Dxu lip}. First, we need to establish the following basic fact about how parabolic viscosity solutions relate to viscosity solutions in only $x$:

\begin{lem}
\label{lem: visc soln in t and x}
If $u$ is a Lipschitz in $t$ viscosity solution of $u_t-F(D^2u)=0$ in $Y_r$, then for any $t\in (0,r^2)$ the function $x\mapsto u(x,t)$ satisfies, in the viscosity sense,
\begin{align*}
\mathcal{M}^-(D^2u(x, t)) &\leq \linfty{u_t}{Y_r} \text{ in }B_r,\\
\mathcal{M}^+(D^2u(x, t)) &\geq -\linfty{u_t}{Y_r} \text{ in }B_r.
\end{align*}
\end{lem}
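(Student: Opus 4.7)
The plan is to reduce each of the two spatial viscosity inequalities to a parabolic viscosity inequality that $u$ already satisfies. Set $L = \linfty{u_t}{Y_r}$. The key observation is that the Lipschitz-in-$t$ bound $L$ lets one convert a spatial test function into an admissible parabolic one on a backward cylinder by adding a time-linear term of slope $\pm L$.

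For the first inequality, I would fix $t_0 \in (0, r^2)$ and a $C^2$ spatial function $\phi$ touching $u(\cdot, t_0)$ from below at some $x_0 \in B_r$, i.e., $\phi \leq u(\cdot, t_0)$ on a ball $B_\rho(x_0)$ with equality at $x_0$. Introduce the parabolic test function
\[
\Phi(x, t) = \phi(x) + L(t - t_0).
\]
For $t \leq t_0$ and $x \in B_\rho(x_0)$, the Lipschitz bound $u(x, t_0) - u(x, t) \leq L(t_0 - t)$ gives $u(x, t) \geq \phi(x) - L(t_0 - t) = \Phi(x, t)$, with equality at $(x_0, t_0)$. Hence $\Phi$ is an admissible parabolic test from below on $Y^-_\rho(x_0, t_0)$, and the viscosity supersolution property of $u$ applied to $\Phi$ yields
\[
L - F(D^2\phi(x_0)) = \Phi_t(x_0, t_0) - F(D^2\Phi(x_0, t_0)) \geq 0,
\]
so $F(D^2\phi(x_0)) \leq L$. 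Since $F$ is uniformly elliptic with $F(0) = 0$, the standard pointwise inequality $\mathcal{M}^-(X) \leq F(X)$ for every $X \in \mathcal{S}_{n \times n}$ then gives $\mathcal{M}^-(D^2\phi(x_0)) \leq L$, which is the desired viscosity inequality.

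The second inequality is entirely symmetric. Given a $C^2$ function $\phi$ touching $u(\cdot, t_0)$ from above at $x_0 \in B_r$, I would set $\Phi(x, t) = \phi(x) + L(t_0 - t)$; the bound $u(x, t) \leq u(x, t_0) + L(t_0 - t)$ makes $\Phi$ an admissible parabolic test from above on $Y^-_\rho(x_0, t_0)$, and the subsolution property of $u$ combined with $F(X) \leq \mathcal{M}^+(X)$ produces $\mathcal{M}^+(D^2\phi(x_0)) \geq -L$.

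There is no substantive obstacle; the entire content is choosing the sign of the time perturbation correctly so that the Lipschitz bound on $u_t$ absorbs the change in $u$ between times $t$ and $t_0$, turning a spatial tangent into a parabolic tangent on the backward cylinder. The uniform ellipticity of $F$ together with $F(0)=0$ then supplies the one-sided comparison with the Pucci operators needed to convert the bound on $F(D^2\phi(x_0))$ into a bound on $\mathcal{M}^{\mp}(D^2\phi(x_0))$.
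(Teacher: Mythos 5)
Your proof is correct and is essentially the paper's argument: in both cases one converts a spatial test function into a parabolic one on a backward cylinder by adding a time-linear term of slope $\pm\linfty{u_t}{Y_r}$, applies the parabolic viscosity inequality, and then uses $\mathcal{M}^-(X)\leq F(X)\leq \mathcal{M}^+(X)$ (from (F1) and (F2)). If anything, your write-out is slightly more careful than the paper's, which only does the subsolution case and whose final display uses $\mathcal{M}^-$ where your (correct) use of $F\leq\mathcal{M}^+$ is what the stated conclusion requires.
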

\begin{proof}
We will check that $u(\cdot, t)$ is a subsolution; checking that $u(\cdot, t)$ is a supersolution is analogous. Let us suppose  that $\phi(y)\in C^2(B_r)$ touches $u(y, t)$ from above at $x\in B_r$, so that
\begin{equation}
\label{uP}
u(y,t)\leq \phi(y) \text{ for all } y\in B_r, \text{ with equality at } y=x.
\end{equation}
Since $u$ is Lipschitz in $t$, we have
\[
u(y,s)\leq u(y,t)+ \linfty{u_t}{Y_r} (t-s)  \text{ for all $y\in B_r$ and for all $s\leq t$, with equality  at } s=t.
\]
Using the inequality (\ref{uP}) to bound the right-hand side of the previous line from above yields that $\phi(y)+ \linfty{u_t}{Y_r} (t-s)$ touches $u(y,s)$ from above at $(x,t)$ in  $B_r\times (0,t]$. Because $u$ is a viscosity solution of $u_t-F(D^2u)=0$, we obtain
\[
-\linfty{u_t}{Y_r} -F(D^2\phi(x)) \leq 0.
\]
The uniform ellipticity of $F$ implies
\[
0\geq -\linfty{u_t}{Y_r} -\mathcal{M}^+(D^2\phi(x)),
\]
as desired.
\end{proof}

We also recall the Harnack inequality for \emph{elliptic} equations  (see \cite[Theorem 4.3]{Cabre Caffarelli book}).
\begin{thm}
\label{thm:harnack}
Assume that $u:B_s(0)\subset \rr^n\rightarrow \rr$ satisfies $u\geq 0 $ in $B_s(0)$ and, for some positive constant $c$,
\begin{align*}
\mathcal{M}^-(D^2u) &\leq c \text{ in }B_s(0)\\
\mathcal{M}^+(D^2u) &\geq -c \text{ in }B_s(0).
\end{align*}
There exits a universal constant $C$ so that
\[
\sup_{B_{s/2}(0)} u \leq C\left(\inf_{B_{s/2}(0)} u + s^2c\right).
\]
\end{thm}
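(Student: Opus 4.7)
The plan is to reduce to the scale-invariant case on $B_1$ by dimensional rescaling and then invoke the classical Krylov--Safonov Harnack inequality for viscosity solutions of inhomogeneous Pucci extremal inequalities, which is the content of \cite[Theorem 4.3]{Cabre Caffarelli book} (the very reference the excerpt attributes the statement to). Since this is a recalled classical result, the actual work consists of a scaling reduction and some constant bookkeeping.

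First I would set $\tilde{u}(y) := u(sy)$ for $y\in B_1$. A direct computation using $D^2\tilde{u}(y)=s^2(D^2u)(sy)$ and the $1$-homogeneity of $\mathcal{M}^\pm$ gives $\mathcal{M}^-(D^2\tilde u)\leq s^2 c$ and $\mathcal{M}^+(D^2\tilde u)\geq -s^2 c$ in $B_1$, while $\tilde u\geq 0$ is preserved. This puts $\tilde u$ in the hypotheses of the classical Harnack inequality on the unit ball with constant inhomogeneity $s^2 c$.

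Next I would invoke Caffarelli--Cabr\'e's Harnack inequality: there exists a universal constant $C_0 \geq 1$ with
\[
\sup_{B_{1/2}}\tilde u \;\leq\; C_0\bigl(\inf_{B_{1/2}}\tilde u \,+\, s^2 c\bigr).
\]
Undoing the rescaling gives $\sup_{B_{s/2}} u \leq C_0\bigl(\inf_{B_{s/2}} u + s^2 c\bigr)$, which rearranges to $\inf_{B_{s/2}} u \geq C_0^{-1}\sup_{B_{s/2}} u - s^2 c$. Splitting on the dichotomy $\sup_{B_{s/2}} u \leq s^2 c$ versus $\sup_{B_{s/2}}u > s^2 c$ then yields the stated form after renaming the constant: in the first case the right-hand side $C(\sup u - s^2 c)$ is nonpositive and the inequality is trivial from $u\geq 0$, and in the second case the bound is obtained by choosing $C$ a small universal constant and using the positivity of $\sup_{B_{s/2}} u - s^2 c$ to absorb the additive $s^2 c$ into the multiplicative factor.

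The argument is in essence just a scaling reduction plus citation, so there is no serious conceptual obstacle. All of the genuine analytic difficulty---the elliptic Alexandrov--Bakelman--Pucci estimate, the Calder\'on--Zygmund cube decomposition, and the iteration producing the weak $L^\varepsilon$ bound that underpin the Krylov--Safonov theory---is packaged inside the cited classical theorem. The only mild nuisance is keeping the constant bookkeeping clean when rearranging the classical multiplicative form into the additive form displayed in the statement.
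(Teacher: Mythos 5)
The paper offers no proof of this theorem: it is recalled (in rescaled form) from Caffarelli--Cabr\'e, so your overall strategy --- dilate to $B_1$ and cite \cite[Theorem 4.3]{Cabre Caffarelli book} --- is exactly the paper's, and your scaling step is correct: $D^2\tilde u(y)=s^2(D^2u)(sy)$ together with the positive $1$-homogeneity of $\mathcal{M}^\pm$ gives $\tilde u\in S^*(\lambda,\Lambda, s^2c)$ on $B_1$ (the inhomogeneity entering the classical statement is $\|s^2c\|_{L^n(B_1)}=s^2c\,|B_1|^{1/n}$, a harmless universal multiple).

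The gap is in the final ``constant bookkeeping.'' From $\sup_{B_{s/2}}u\le C_0(\inf_{B_{s/2}}u+s^2c)$ you correctly obtain $\inf_{B_{s/2}}u\ge C_0^{-1}\sup_{B_{s/2}}u-s^2c$, but this is strictly weaker than the displayed form $\inf u\ge C(\sup u-s^2c)$, in which the \emph{same} small constant $C$ multiplies both $\sup u$ and $s^2c$. Your dichotomy does not close the intermediate regime $s^2c<\sup_{B_{s/2}}u\le 2C_0\,s^2c$: there $C_0^{-1}\sup u-s^2c$ may be negative, so the rearranged Harnack gives nothing beyond $\inf u\ge 0$, while $C(\sup u-s^2c)$ is strictly positive, and there is no positive quantity available to ``absorb'' the additive $s^2c$ into. (The absorption works only when, say, $\sup u\ge 2C_0 s^2c$.) What the classical theorem actually delivers is $\inf_{B_{s/2}}u\ge C_1\sup_{B_{s/2}}u-C_2\,s^2c$ with two independent universal constants, and this two-constant form is all the paper ever uses: in the proof of Lemma \ref{lem:Dxu lip} the conclusion $\inf_{B_{s/2}(x^*)}w\ge -Cs^2(\theta^{-1}+\linfty{u_t}{Y_r(\bar{x},\bar{t})})$ follows just as well from it. You should either state and use the two-constant version, or justify the single-constant version from the proof in \cite{Cabre Caffarelli book} rather than from the arithmetic rearrangement of its statement, which does not yield it.
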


We are now ready to proceed with:
\begin{proof}[Proof of Lemma \ref{lem:Dxu lip}]
We will give the proof for $u^+_{\theta}(x,t)$; the argument for  $u^-_{\theta}(x,t)$ is very similar. 

Let $x^*$ be a point at which  the supremum in the definition of $u^+_\theta(x,t)$ is achieved, so that
\begin{equation}
\label{eq:diff at x*}
u(x^*,t) - \frac{|x-x^*|^2}{2\theta} \geq u(y,t) - \frac{|x-y|^2}{2\theta} \text{ for all } y\in \Omega. 
\end{equation}
Therefore, as a function of $y$, $u(y,t)$ is touched from above at $x^*$ by the convex paraboloid $Q(y)$, where
\[
Q(y)=u(x^*,t) - \frac{|x-x^*|^2}{2\theta} + \frac{|x-y|^2}{2\theta}.
\]
Thus, we have a bound from above on $D^2u(x^*,t)$:
\begin{equation*}
D^2u(x^*,t) \leq \theta^{-1}I.
\end{equation*}
Next we will show, using the Harnack inequality, that $u(\cdot, t)$ is also touched from below at $x^*$ by a paraboloid with bounded opening, thus obtaining a bound from below on $D^2u(x^*, t)$.

There exists an affine function $l(y)$ with $l(x^*)=u(x^*, t)$ and such that
\[
Q(y)= l(y)+\frac{|y-x^*|^2}{2\theta}.
\]
There exists $\rho>0$ such that $B_\rho(x^*)\subset B_r(\bar{x})$. We fix any $0<s<\rho$  and define the function $w(y)$ by 
\[
w(y)=l(y)+\frac{s^2}{2\theta} - u(y,t).
\]
We will apply the Harnack inequality, Theorem \ref{thm:harnack}, to $w$ in  $B_s(x^*)$ once we verify its hypotheses. First, we verify that $w$ is non-negative on $B_s(x^*)$. Let us fix $y\in B_s(x^*)$. We use the definition of $w$, the fact that $u\leq Q$ on $B_s(x^*)$ and the definition of $Q$, to obtain:
\[
w(y)=l(y)+\frac{s^2}{2\theta} - u(y,t)\geq l(y)+\frac{s^2}{2\theta} -Q(y)=  \frac{s^2}{2\theta}-\frac{|y-x^*|^2}{2\theta}  \geq 0.
\]
Next, by the properties of the extremal operators and Lemma \ref{lem: visc soln in t and x}, we see that $w$ satisfies
\begin{align*}
\mathcal{M}^-(D^2w) = \mathcal{M}^-(-D^2u)  = - \mathcal{M}^+(D^2u) &\leq \linfty{u_t}{Y_r(\bar{x},\bar{t})}\text{ in }B_s(x^*),\\
\mathcal{M}^+(D^2w)= \mathcal{M}^+(-D^2u) = -\mathcal{M}^-(D^2u)  &\geq - \linfty{u_t}{Y_r(\bar{x},\bar{t})} \text{ in }B_s(x^*).
\end{align*}
 Therefore, the Harnack inequality  implies that there exists a universal constant $C$ such that
\begin{equation}
\label{eqn:useharnack}
\sup_{B_{s/2}(x^*)}w \leq C\left(\inf_{B_{s/2}(x^*)}w + s^2\linfty{u_t}{Y_r(\bar{x},\bar{t})}\right).
\end{equation}
We evaluate $w$ at $x^*$ and use  that $l(x^*)=u(x^*,t)$ to obtain a upper bound on the infimum of $w$:
\[
\inf_{B_{s/2}(x^*)}w \leq w(x^*)=l(x^*)+\frac{s^2}{2\theta}-u(x^*,t)=\frac{s^2}{2\theta}.
\]
We use this to bound the first term on the left-hand side of (\ref{eqn:useharnack}) from above and thus obtain an upper bound on the supremum of $w$:
\[
\sup_{B_{s/2}(x^*)}w \leq Cs^2(\theta^{-1} +\linfty{u_t}{Y_r(\bar{x},\bar{t})}).
\]
Next we use this upper bound to estimate the difference between $Q$ and $u$ from above. We use the definitions of $Q$ and of $w$, and the  estimate on the supremum of $w$ in the previous line, to find, for all $y\in B_{s/2}(x^*)$, 
\[
Q(y)-u(y,t) = l(y)+\frac{|y-x^*|^2}{2\theta}-u(y,t) \leq w(y) \leq Cs^2(\theta^{-1} +\linfty{u_t}{Y_r(\bar{x},\bar{t})}).
\]
Since this holds for \emph{all} $0<s<\rho$, we have that $(Q(\cdot)-u(\cdot,t))$ is touched from above at $x^*$ by a convex paraboloid of opening $C(\theta^{-1} +\linfty{u_t}{Y_r(\bar{x},\bar{t})})$. Therefore, 
\[
D^2Q(x^*)-D^2u(x^*,t)\leq C(\theta^{-1} +\linfty{u_t}{Y_r(\bar{x},\bar{t})}))I.
\]
Rearranging the previous inequality and using  $D^2Q(x^*)=\theta^{-1}I$ yields
\begin{equation}
\label{D2ubelow}
D^2u(x^*, t) \geq -C(\theta^{-1} +\linfty{u_t}{Y_r(\bar{x},\bar{t})})I.
\end{equation}
Since assumption (\ref{asmregdist}) of this proposition says that $Y_r(\bar{x},\bar{t})$ is distance $\theta$ away from the boundary of $\Omega\times (0,T)$,  Proposition \ref{prop:diff of soln} implies
\[
\linfty{u_t}{Y_r(\bar{x},\bar{t})}\leq C\theta^{-2}(\linfty{u}{\Omega\times(0,T)}+1).
\] 
We use this to bound the right-hand side of (\ref{D2ubelow}) from below and obtain the desired estimate.
\end{proof}

Next, we give the proof of Lemma \ref{cor:psiMx*}. We need the following lemma, which we mentioned in the  outline of the proof of Proposition \ref{prop: more regularity for convolutions}.
\begin{lem}
\label{lem:touch}
Let us suppose that the supremum in the definition of $u^+_\theta(x,t)$ is achieved at $(x^*,t)\in Y_r(\bar{x},\bar{t})\subset \Omega\times(0,T)$. In addition, suppose that for some $\phi\in C(\Omega\times(0,T))$ we have 
\begin{equation}
\label{uphibelow}
u(z,s) - u(x^*,t)\geq \phi(z,s)
\end{equation}
for all $(z,s)\in Y_r(\bar{x},\bar{t})\cap\{s\leq t\}$ and  $\phi(x^*,t)=0$.  Then, for all $(y,s)\in Y^\theta_r(\bar{x},\bar{t})\cap \{s\leq t\}$,
\[
 u_\theta^+(y,s) - u_\theta^+(x,t) \geq \phi(y+x^*-x,s).
\]
\end{lem}
\begin{proof}[Proof of Lemma \ref{lem:touch}]
Since $(x^*,t)$ is a point at which the supremum is achieved in the definition of $u^+_\theta(x,t)$, we have
\begin{equation}
\label{atx*}
 u_\theta^+(x,t) =u(x^*,t)-\frac{|x-x^*|^2}{2\theta} .
\end{equation}
Let  $(y,s)$ be any point in  $Y^\theta_r(\bar{x},\bar{t})\cap \{s\leq t\}$.  By item (\ref{item:Utheta}) of Proposition  \ref{prop:x-sup-conv}, we have $(y+ x-x^*,s)\in Y_r(\bar{x},\bar{t})$. In particular, we have $(y+ x-x^*)\in \Omega$, so we may use $(y+ x^*-x)$  as a ``test point'' in  the definition of $u_\theta^+(y,s)$. We obtain
\[
 u_\theta^+(y,s) \geq u(y+x^*-x,s)-\frac{|x-x^*|^2}{2\theta}.
\]
Subtracting the equality (\ref{atx*}) from the above gives, for all $y\in Y^\theta_r(\bar{x},\bar{t})$,
\begin{equation}
\label{uthetu}
 u_\theta^+(y,s) - u_\theta^+(x,t) \geq u(y+x^*-x,s)-u(x^*,t).
\end{equation}
Since $(y+ x^*-x,s)\in Y_r(\bar{x},\bar{t})\cap \{s\leq t\}$,  we may evaluate (\ref{uphibelow}) at $z=y+ x^*-x$ to find,
\[
u(y+x^*-x,s)-u(x^*,t)\geq \phi(y+x^*-x,s).
\]
 We use this to bound the right-hand side of (\ref{uthetu}) and obtain,
\[
 u_\theta^+(y,s) - u_\theta^+(x,t) \geq \phi(y+x^*-x,s).
\]
This holds for any $(y,s)$ in $Y^\theta_r(\bar{x},\bar{t})\cap \{s\leq t\}$; hence the proof of the lemma is complete.
\end{proof}
We now proceed with:
\begin{proof}[Proof of Lemma \ref{cor:psiMx*}]
Since $(x^*,t) \in \Psi_M(u,Y_r(\bar{x},\bar{t}))$, Corollary \ref{cor} implies that there exists a polynomial $Q\in \PPi$ with $Q_t-F(D^2Q)=0$ such that for $(z,s)\in Y_r(\xbar,\tbar)\cap \{s\leq t\}$,
\begin{equation*}
\begin{split}
 |u(z,s) - u(x^*,t) - Q(z,s)| \leq \frac{nM}{r^2} &( r|x^*-z|^3 + |x^*-z|^2|t-s| +|t-s|^2) +\\&+C\frac{M+\linfty{u_t}{Y_r(\bar{x},\bar{t})}}{r}|x^*-z|(t-s).
 \end{split}
\end{equation*}
In particular, we have $u(z,s)-u(x^*,t) \geq \phi(z,s)$ for all $(z,s)\in Y_r(\xbar,\tbar)\cap \{s\leq t\}$, where we take $\phi$ to be
\begin{equation*}
\begin{split}
 \phi(z,s)= Q(z,s)- \frac{nM}{r^2} &( r|x^*-z|^3 + |x^*-z|^2|t-s| +|t-s|^2) -\\&-C\frac{M+\linfty{u_t}{Y_r(\bar{x},\bar{t})}}{r}|x^*-z|(t-s).
 \end{split}
\end{equation*}
We note $\phi(x^*,t)=Q(x^*,t)=0$. Therefore, according to Lemma \ref{lem:touch}, we have,
\begin{equation}
\label{withphi}
 u_\theta^+(y,s) - u_\theta^+(x,t) \geq \phi(y+x^*-x,s) \text{ for all }(y,s)\in Y^\theta_r(\bar{x},\bar{t})\cap \{s\leq t\}.
\end{equation}
Let us now use the definition of $\phi$ to obtain an alternate expression for the right-hand side of (\ref{withphi}):
\begin{align*}
\phi(y+x^*-x,s)=Q(y+x^*-x,s) - &\frac{nM}{r^2} ( r|x-y|^3 + |x-y|^2|t-s| +|t-s|^2) -\\&-C\frac{M+||u||_{L^{\infty}(Y_r(\bar{x},\bar{t}))}}{r}|x-y|(t-s).
\end{align*}
Let us define the paraboloid  $P(y,s)\in \PPi$ by $P(y,s)= Q(y-x+x^*,s)$. Using this definition and the above expression for $\phi(y+x^*-x,s)$ in the inequality (\ref{withphi}) gives that (\ref{u+below}) holds for all $(y,s)\in Y^\theta_r(\bar{x},\bar{t})\cap \{s\leq t\}$. Finally, we observe that $P$ satisfies the equation (\ref{cond on P}): indeed, $P$ is only a translate of $Q$, and since $Q\in \PPi$, we have that $Q_t(y,s)$ and $D^2Q(y,s)$ do not depend on $(y,s)$. Therefore, we have $P_t-F(D^2P)=Q_t-F(D^2Q)=0$.
\end{proof}

\section{A key estimate between solutions and sufficiently regular $\delta$-solutions}
\label{section: main prop}
In this section we state and prove Proposition \ref{main lemma}, a key part of the proofs of the two main results, Theorems \ref{thm:vague comparison} and \ref{thm:mesh comparison vague}. Roughly, Proposition \ref{main lemma} says that if $w$, $u$, and $v$ are, respectively, a $\delta$-subsolution, a solution and a $\delta$-supersolution of (\ref{eqn:mainLemma}) that satisfy
\[
w\leq u\leq v 
\]
on the parabolic boundary of a region, and $v$ and $w$ are sufficiently regular, then we have 
\[
w-\tilde{c}\delta^{\tilde{\alpha}}\leq u\leq v +\tilde{c}\delta^{\tilde{\alpha}}
\]
on the interior of the region.
Once we have proven Proposition \ref{main lemma}, we will only need to use the basic properties of inf- and sup- convolutions to verify its hypotheses and establish the two main results. We define the  constant $\zeta$ by
\begin{equation}
\label{constants}
 \zeta = \frac{\sigma}{2(4n+3+3\sigma)},
\end{equation}
where $\sigma$ is the constant from Proposition \ref{prop: more regularity for convolutions}. We remark that since $\sigma$ is  universal, so is $\zeta$.
\begin{prop}
\label{main lemma}
Assume (F1), (F2). Assume that $\Omega'$ satisfies (U1) and let $(a,b)$ be a subset of $(0,T)$ for some $T>0$. We denote 
\[
U=\Omega'\times(a,b).
\]
 Assume $u\in C^{0,1}_x(U)$ is a solution of 
\begin{equation}
\label{eqn:mainLemma}
u_t-F(D^2u)=0 
\end{equation}
in $U$. For $\delta\in (0,1)$, define the quantity $r_\delta$ by
\[
r_\delta = 9\sqrt{n}(1+\linfty{u}{U}+2\linfty{Du}{U})\delta^{\zeta}
\]
and the set $\tilde{U}$ by
\[
\tilde{U} = \left\{(x,t)\in U:\   \   t\leq b-r_\delta^2 \text{ and } d((x,t),\bdry U) > 2r_\delta \right\}.
\]
There exist universal constants $\tilde{\alpha}$ and $\tilde{\delta}$ and a positive constant $\tilde{c}$ that depends on $n$, $\lambda$, $\Lambda$, $\diam\Omega'$, $T$, $\linfty{u}{\U}$ and $\linfty{Du}{\U}$ such that for all $\delta\leq \tilde{\delta}$ and:
\begin{enumerate}
\item for  $v\in C^{0,1}(U)$ a $\delta$-supersolution of (\ref{eqn:mainLemma}) in $U$ that satisfies
\begin{enumerate}
\item \label{assump:d2v} $D^2v(x,t)\leq \delta^{-\zeta}I$ in the sense of distributions  for every $(x,t)\in U$, 
\item \label{assump:vt} $\tlipsemi{v}{\U}\leq 3T\delta^{-\zeta}$, and
\item \label{main prop wlog}
$\sup_{\bdry \tilde{U}}( u-v) \leq 0$,
\end{enumerate}
we have
\[
\sup_{\tilde{U}} (u-v)\leq \tilde{c} \delta^{\tilde{\alpha}} ; 
\]
\item for  $w\in C^{0,1}(U)$  a $\delta$-subsolution of (\ref{eqn:mainLemma}) in $U$ with $D^2w(x,t)\geq -\delta^{-\zeta}I$ for every $(x,t)\in U$ in the sense of distributions,   $\tlipsemi{w}{\U}\leq 3T\delta^{-\zeta}$ and  $\sup_{\bdry \tilde{U}}( w-u) \leq 0$, we have
\[
\sup_{\tilde{U}}( w-u)\leq \tilde{c} \delta^{\tilde{\alpha}}.
\]
\end{enumerate}
\end{prop}

\subsection{Outline}
\label{subsec:outline}
We outline the proof of part (1) of the Proposition; the proof of part (2) is very similar. The main idea of the proof is to control the supremum of $(u-v)$ by the size of the contact set of $(u-v)$ with the upper monotone envelope of $(u-v)$.

We first regularize $u$  by $x$-sup convolution and obtain $u^+_{\delta^{\zeta}}$ (see Definition \ref{defxinf}). We then perturb $u^+_{\delta^{\zeta}}$ by subtracting $\delta^{1/4}t$ to obtain a strict sub-solution.  We will  bound $\sup (u^+_{\delta^{\zeta}}-\delta^{1/4}t -v)$. Since $u^+_{\delta^{\zeta}}$ and $v$ are sufficiently regular, we are able to apply Proposition \ref{pre ABP} and find 
\[
\sup_{\tilde{U}} (u^+_{\delta^{\zeta}}-\delta^{1/4}t -c_1\delta^\zeta-v) \leq C|\{u^+_{\delta^{\zeta}}-\delta^{1/4}t-c_1\delta^\zeta-v=\bar{\Gamma}\}|^{\frac{1}{n+1}}\delta^{-C\zeta},
\]
where $\bar{\Gamma}$ is the upper monotone envelope of $u^+_{\delta^{\zeta}}-\delta^{1/4}t-c_1\delta^\zeta-v$ and $C$ and $c_1$ are positive constants. This is Lemma \ref{lem:helper1.5} below. We will proceed by contradiction and assume that $\sup_{\tilde{U}} (u^+_{\delta^{\zeta}}-\delta^{1/4}t-c_1\delta^\zeta -v)$ is ``large". By the estimate above, this implies that the size of the contact set $\{u^+_{\delta^{\zeta}}-\delta^{1/4}t-c_1\delta^\zeta-v = \bar{\Gamma}\}$ is large as well. 

The key part of our argument is Proposition \ref{prop: more regularity for convolutions}, which states that there exists a large subset $\Psi$ of $U$ on which $u^+_{\delta^{\zeta}}$ is very close to being a polynomial. We use this proposition, together with the fact that the size of the contact set is large, to find a point in the intersection of $\Psi$ and the contact set. This is Lemma \ref{lem:Helper2} below. It follows from Proposition \ref{prop: more regularity for convolutions} by a covering argument.

Our last ingredient is the fact that if $(x,t)$ is a point in the contact set $\{u^+_{\delta^{\zeta}}-\delta^{1/4}t-c_1\delta^\zeta-v = \bar{\Gamma}\}$ at which $u^+_{\delta^{\zeta}}$ is touched from below by some $\phi$, then $v$ is touched from below by $\phi$ at that point as well. This is Lemma \ref{lem:helper3} below. We use this fact, applied at the point  $(x,t)$ in the intersection of $\Psi$ and the contact set, to obtain the desired contradiction.

\subsection{Proof of Proposition \ref{main lemma}}
Throughout the rest of this section, we will use  $C$ and $C_i$ with $i=1,2,...$,  to denote positive constants that depend only $n$, $\lambda$, $\Lambda$, $\diam\Omega'$, $T$, $\linfty{u}{\U}$ and $\linfty{Du}{\U}$.
For the proof of Proposition \ref{main lemma} we need three  lemmas.

\begin{lem}
\label{lem:helper1.5}
Under the assumptions of  Proposition \ref{main lemma}, let $\bar{x}\in \rr^n$ and $\rho>0$ be such that $\tilde{U}\subset Y_{\rho}^-(\bar{x}, b-r^2_\delta)$. There exists a  constant $c_1>0$ that depends only on $\linfty{Du}{\U}$  and a constant $C_1$ so that
\begin{equation}
\label{eq:lemsup}
\sup_{\bdry \tilde{U}}(u_{\delta^\zeta}^{+} -\delta^{1/4}t-v)\leq c_1\delta^{\zeta},
\end{equation}
and
\begin{equation}
\label{bd cont set below}
|\{u_{\delta^\zeta}^{+} -\delta^{1/4}t-c_1\delta^{\zeta} - v =\bar{\Gamma}\}\cap \tilde{U}|\geq C_1 \delta^{2\zeta(n+1)}\left(\sup_{\tilde{U}}(u_{\delta^\zeta}^{+} -\delta^{1/4}t-c_1\delta^{\zeta} - v) \right)^{(n+1)},
\end{equation}
where $\bar{\Gamma}$ is the upper monotone envelope of $(u_{\delta^\zeta}^{+} -\delta^{1/4}t-c_1\delta^{\zeta}-v)^+$ extended by $0$ to $Y_{2\rho}^-(\bar{x}, b-r^2_\delta)$.
\end{lem}

\begin{lem}
\label{lem:Helper2}
Under the assumptions of Proposition \ref{main lemma}, there  exists a positive constant $\delta_1$ that depends only on $n$, a universal constant $\bar{C}$, and a positive constant $\bar{c}$ that depends on $n$, $\lambda$, $\Lambda$, $\diam\Omega'$, $\linfty{u}{U}$ and $\linfty{Du}{\U}$, such that, if $\delta\leq \delta_1$ and $\mathcal{C}\subset \tilde{U}$ with 
\[
|\mathcal{C}|\geq \bar{c} \delta^{3\zeta (n+1)},
\]
there then there exists $(x_0,t_0)\in \mathcal{C}$  and a paraboloid $P\in \PPi$ with
\begin{enumerate}
\item $Y^-_\delta(x_0,t_0)\subset U$,
\item $P(x_0,t_0)=0$,
\item $\displaystyle P_t-F(D^2P)=0$, and 
\item for all $(x, t)\in Y_{\delta}^-(x_0,t_0)$,
\begin{equation}
\label{touch u at x0t0}
\begin{split}
u_{\delta^\zeta}^{+}(x,t)-u_{\delta^\zeta}^{+}(x_0, t_0) \geq P(x,t) - \bar{C}\delta^{1/2}(|x-x_0|^2+(t_0-t)).
\end{split}
\end{equation}
\end{enumerate}
\end{lem}

Lemma \ref{lem:Helper2} follows from Proposition \ref{prop: more regularity for convolutions} and a covering argument. The covering argument is slightly more involved than in the elliptic case because of the need to avoid times $t$ that are close to the terminal time $T$.

\begin{lem}
\label{lem:helper3}
Under the assumptions of Proposition \ref{main lemma}, suppose there exists a point $(x_0,t_0)$ and a function $\phi(x,t)$ such that
\[
(x_0,t_0)\in \{u_{\delta^\zeta}^{+} -\delta^{1/4}t-c_1\delta^{\zeta} - v =\bar{\Gamma}\}\cap \tilde{U},
\]
 where $\bar{\Gamma}$ is as in Lemma \ref{lem:helper1.5}, and 
 \begin{enumerate}
\item $Y^-_\delta(x_0,t_0)\subset U$,
\item $\phi(x_0,t_0)=0$,
\item for all $(x, t)\in Y_{\delta}^-(x_0,t_0)$,
\begin{equation}
\label{touch u at x0t0phi}
\begin{split}
 u_{\delta^\zeta}^{+}(x,t)-u_{\delta^\zeta}^{+}(x_0, t_0) \geq \phi(x,t).
\end{split}
\end{equation}
\end{enumerate}
Then 
\begin{equation}
\label{conclem3}
\phi_t(x_0, t_0)-F(D^2\phi(x_0, t_0))\geq \delta^{1/4}.
\end{equation}
\end{lem}
In the situation of Lemma \ref{lem:helper3}, we have  that $u_{\delta^\zeta}^{+}$ is a viscosity \emph{subsolution}, so we cannot directly deduce (\ref{conclem3}) from (\ref{touch u at x0t0phi}). The proof of the lemma uses that $(x_0,t_0)$ is in the contact set and the fact that $v$ is a $\delta$-supersolution.

We postpone the proofs of the lemmas and proceed with the proof of Proposition \ref{main lemma}.
\begin{proof}[Proof of Proposition \ref{main lemma}]
We will give the proof for part (1) of the proposition; the proof of part (2) is very similar.  Let $c_1$ and $C_1$ be the constants from Lemma \ref{lem:helper1.5} and let $\delta_1$, $\bar{c}$ and $\bar{C}$ be the constants from Lemma \ref{lem:Helper2}. 
Let us take 
\begin{equation}
\label{choicedelta}
\tilde{\delta}=\min\left\{\delta_1, \frac{1}{2}\left(\bar{C}\left(1+\lambda/2\right)\right)^{-2}\right\}.
\end{equation}
We point out that  $\tilde{\delta}$ is universal, because $\delta_1$ depends only on $n$ and $\bar{C}$ is universal. Let us fix $\delta\leq \tilde{\delta}$. We claim
\begin{equation}
\label{main lemma bound sup}
\sup_{\tilde{U}}( u_{\delta^\zeta}^{+} -\delta^{1/4}t-c_1\delta^{\zeta} - v )\leq \left(\frac{\bar{c}}{C_1}\right)^{\frac{1}{n+1}}\delta^{\zeta}.
\end{equation}
If (\ref{main lemma bound sup}) holds, then we obtain the desired bound on $\sup(u-v)$: indeed, since $u\leq u^+_{\delta^{\zeta}}$ and $t\leq b\leq T$, we have,
\[
\sup_{\tilde{U}} (u-v) \leq \sup_{\tilde{U}} (u_{\delta^\zeta}^{+} -\delta^{1/4}t-c_1\delta^{\zeta}-v )+ \delta^{1/4}T +c_1\delta^{\zeta}\leq \left(\frac{\bar{c}}{C_1}\right)^{\frac{1}{n+1}}\delta^{\zeta}  + \delta^{1/4}T +c_1\delta^{\zeta} \leq \tilde{c}\delta^{\tilde{\alpha}},
\]
where $\tilde{c}= c_1+T+ \left(\frac{\bar{c}}{C_1}\right)^{\frac{1}{n+1}}$ and $\tilde{\alpha}=\min\{1/4, \zeta\}$. Therefore, to complete the proof of this proposition it will suffice to show that (\ref{main lemma bound sup}) holds. To this end,  we proceed by contradiction and assume 
\begin{equation}
\label{contsup}
\sup_{\tilde{U}}( u_{\delta^\zeta}^{+} -\delta^{1/4}t-c_1\delta^{\zeta} - v )> \left(\frac{\bar{c}}{C_1}\right)^{\frac{1}{n+1}}\delta^{\zeta}.
\end{equation}
 By Lemma \ref{lem:helper1.5}, we have the following lower bound on the size of the contact set:
\begin{equation*}
|\{u_{\delta^\zeta}^{+} -\delta^{1/4}t-c_1\delta^{\zeta} - v =\bar{\Gamma}\}\cap \tilde{U}|\geq C_1 \delta^{2\zeta(n+1)}\left(\sup_{\tilde{U}}(u_{\delta^\zeta}^{+} -\delta^{1/4}t-c_1\delta^{\zeta} - v) \right)^{(n+1)}.
\end{equation*}
We use (\ref{contsup}) to bound the right-hand side of the previous line from below and obtain,
\[
|\{u_{\delta^\zeta}^{+} -\delta^{1/4}t-c_1\delta^{\zeta} - v =\bar{\Gamma}\}\cap \tilde{U}|\geq \bar{c}\delta^{3\zeta(n+1)}.
\]
We now apply Lemma \ref{lem:Helper2} with $\mathcal{C}=\{u_{\delta^\zeta}^{+} -\delta^{1/4}t-c_1\delta^{\zeta} - v =\bar{\Gamma}\}\cap \tilde{U}$ and obtain that there exists a point $(x_0, t_0)\in \{u_{\delta^\zeta}^{+} -\delta^{1/4}t-c_1\delta^{\zeta} - v =\bar{\Gamma}\}$ with
\begin{equation}
\label{Y^-in U}
Y^-_\delta(x_0,t_0)\subset \tilde{U},
\end{equation}
and a paraboloid $P\in \PPi$ with $P(x_0,t_0)=0$,
\begin{equation}
\label{what P solves}
P_t-F(D^2P)=0,
\end{equation}
and
\begin{equation}
\label{touch u at x0t0 used}
\begin{split}
u_{\delta^{\zeta}}^+(x,t)-u_{\delta^{\zeta}}^+(x_0, t_0) \geq P(y,s) - \bar{C}\delta^{1/2}\cdot (|x_0-x|^2+(t_0-t)),
\end{split}
\end{equation}
where $\bar{C}$ is a universal constant. 
We see that the hypotheses of Lemma \ref{lem:helper3} are satisfied, with $\phi(y,s)=P(y,s) - \bar{C}\delta^{1/2}\cdot (|x_0-x|^2+(t_0-t))$.
Therefore, applying Lemma \ref{lem:helper3} yields that inequality (\ref{conclem3}) holds. With our choice of $\phi$,  (\ref{conclem3})  reads,
\[
P_t+\bar{C}\delta^{1/2}  - F\left(D^2P - \frac{\bar{C}\delta^{1/2}}{2}I\right)\geq \delta^{1/4}
\]
(we remark that since $P\in \PPi$, we have that $P_t$ and $D^2P$ are constant.)
We use the uniform ellipticity of $F$ to obtain an upper bound for the left-hand side of the previous line, and find 
\[
P_t+\bar{C}\delta^{1/2} - F(D^2P) +\lambda\frac{\bar{C}\delta^{1/2}}{2}\geq \delta^{1/4}.
\]
But,  according to (\ref{what P solves}), we have $P_t-F(D^2P)=0$, so we use this to simplify the left-hand side of the previous line  and obtain,
\[
\bar{C}\delta^{1/2}\left(1+\lambda/2\right)\geq \delta^{1/4}.
\]
Multiplying both sides by $\delta^{-1/2}$ we find,
\[
\bar{C}\left(1+\lambda/2\right)\geq \delta^{-1/2}.
\]
Since we took $\delta\leq \tilde{\delta}$, this inequality contradicts our choice of $\tilde{\delta}$ in (\ref{choicedelta}). Therefore, 
we have obtained the desired  contradiction to (\ref{contsup}), so (\ref{main lemma bound sup}) must hold. The proof of the proposition is thus complete.
\end{proof}

We now proceed with the proofs of Lemmas \ref{lem:helper1.5}, \ref{lem:Helper2} and \ref{lem:helper3}.

\begin{proof}[Proof of Lemma \ref{lem:helper1.5}]
We first establish the estimate (\ref{eq:lemsup}). To this end, by item (\ref{item: bound from above on sup conv}) of  Proposition \ref{prop:x-sup-conv} and assumption (\ref{main prop wlog}) of Proposition \ref{main lemma} we have
\begin{equation*}
 \sup_{\bdry \tilde{U}} (u_{\delta^\zeta}^{+} -\delta^{1/4}t- v)\leq \sup_{\bdry \tilde{U}} (u_{\delta^\zeta}^{+} -v )\leq  \sup_{\bdry \tilde{U}} (u-v) +2\delta^{\zeta}\linfty{Du}{U}^2\leq c_1\delta^{\zeta},
\end{equation*}
where $c_1=2\linfty{Du}{\U}^2$. Let us define the function $w$ on $Y^-_\rho(\bar{x}, b-r_\delta^2)$ by 
\[
w(x,t)=\begin{cases}
\max\{u_{\delta^\zeta}^{+}(x,t) -\delta^{1/4}t-v(x,t)-c_1\delta^\zeta, 0\} &\text{ for }(x,t)\in\tilde{U}\\
0& \text{ on }Y^-_\rho(\bar{x}, b-r_\delta^2)\setminus \tilde{U}.
\end{cases}
\]
The previous estimate implies that $w$ is continuous on $Y^-_\rho(\bar{x}, b-r_\delta^2)$.

To establish the second assertion of the lemma, estimate (\ref{bd cont set below}),  we seek to apply Proposition \ref{pre ABP} to $-w$.
Since $d(\tilde{U}, \bdry U) =2r_\delta$, Proposition \ref{prop:diff of soln} implies that $u$ is differentiable in $t$ in $\tilde{U}$, with 
\[
\linfty{u_t}{\tilde{U}}\leq C r^{-2}_\delta (\linfty{u}{\U}+1)\leq C\delta^{-2\zeta}.
\]
Therefore, $\tlipsemi{u_{\delta^\zeta}^{+}}{\tilde{U}}\leq C\delta^{-2\zeta}$ as well. Together with assumption (\ref{assump:vt}) of Proposition \ref{main lemma}, this implies $\tlipsemi{w}{\tilde{U}}\leq C\delta^{-2\zeta}$. In addition, assumption (\ref{assump:d2v}) of Proposition \ref{main lemma} and the properties of $x$-sup convolutions  (item (\ref{item:sup conv is semiconvex}) of Proposition \ref{prop:x-sup-conv})) imply $-D^2w=D^2(-u_{\delta^\zeta}^{+} + v)\leq 2\delta^{-\zeta}I$ in the sense of distributions on all of $\tilde{U}$. 
Since $w\equiv 0$ outside of $\tilde{U}$, the hypotheses of  Proposition \ref{pre ABP} hold with $K=C\delta^{-2\zeta}$. Applying  Proposition \ref{pre ABP} gives a bound on the supremum of $(-w)^-$ in terms of the size of the contact set of $-w$ with $\Gamma$, the lower monotone envelope of $\min(-w,0)$:
\[
\sup_{Y_{2\rho}^-(\bar{x}, b-r_\delta^2)} (-w)^-\leq C|\{-w=\Gamma\}|^{\frac{1}{n+1}} \delta^{-2\zeta}. 
\]
It is easy to see  $\bar{\Gamma}\equiv -\Gamma$, where $\bar{\Gamma}$ is defined in the statement of this lemma, and $\sup_{Y_{2\rho}^-(\bar{x}, b-r_\delta^2)}(-w)^-=\sup_{\tilde{U}} w$. Using this, together with the previous estimate and the definition of $w$ yields,
\[
\sup_{\tilde{U}}(u_{\delta^\zeta}^{+} -\delta^{1/4}t-c_1\delta^{\zeta} - v ) \leq C|\{u_{\delta^\zeta}^{+} -\delta^{1/4}t-c_1\delta^{\zeta} - v =\bar{\Gamma}\}\cap \tilde{U}|^{\frac{1}{n+1}} \delta^{-2\zeta}.
\]
Rearranging, we find that (\ref{bd cont set below}) holds.
\end{proof}

\begin{proof}[Proof of Lemma \ref{lem:Helper2}]
Let us fix a subset $\mathcal{C}$ of  $\tilde{U}$ that satisfies
\begin{equation}
\label{sizeC}
|\mathcal{C}|\geq \bar{c}\delta^{3\zeta(n+1)},
\end{equation}
where the constant $\bar{c}$ is specified in line (\ref{choicec}) below, and depends only on $n$, $\lambda$, $\Lambda$, $\diam\Omega'$, $T$, $\linfty{u}{\U}$ and $\linfty{Du}{\U}$. To simplify notation, for the remainder of this proof we denote the $x$-sup convolution $u^+_{\delta^{\zeta}}$ by simply $u^+$. In addition, we  need the following families of sets:
\begin{align*}
\ydel(x,t) &:= Y^{\delta^{\zeta}}_{\rdel} (x,t) = B\left(x, r_\delta -2 \delta^{\zeta} \linfty{Du}{\U}\right) \times \left(t , t+r_\delta^2\right],\\
\kdel(x,t)  &:= K^{\delta^{\zeta}}_{\rdel} (x,t) =\left[ x - \delta^{\zeta}(1+\linfty{u}{U}) ,x+ \delta^{\zeta}(1+\linfty{u}{U})\right]^n \times \left(t , t+\frac{r_\delta^2}{81n}\right], \\
\kdeltop(x,t)  &:=\left[ x- \delta^{\zeta}(1+\linfty{u}{U}) ,x + \delta^{\zeta}(1+\linfty{u}{U})\right]^n \times \left(t+\frac{r_\delta^2}{162n} , t+\frac{r_\delta^2}{81n}\right].
\end{align*}
We point out that  $\kdeltop(x,t)$ is the top half (in terms of $t$) of $\kdel(x,t)$. See Figure 1.
\begin{figure}\caption{The sets involved in the proof of Lemma \ref{lem:Helper2}.}  \label{sets for lemma}\centering \begin{tikzpicture}[scale=.8]
\draw (0,0) rectangle (10,6);
\draw (2,0) rectangle (8,2);
\draw (2,2) rectangle (8,4);
\draw [fill=lightgray] (1,1) rectangle (7,2.5);
\draw [fill] (5,0) circle (.1);
\node [below] at (5,0) {$(x,t)$};
\draw [fill] (4,2.5) circle (.1);
\node [above] at (4,2.5) {$(x_0,t_0)$};
\node [above left] at (7,1) {$Y^-_\delta(x_0,t_0)$};
\node [below right] at (0,6) {$\tilde{Y}_{r_\delta}(x,t)$};
\node [below left] at (8,4) {$\tilde{K}^T_{r_\delta}(x,t)$};
\end{tikzpicture}\end{figure}
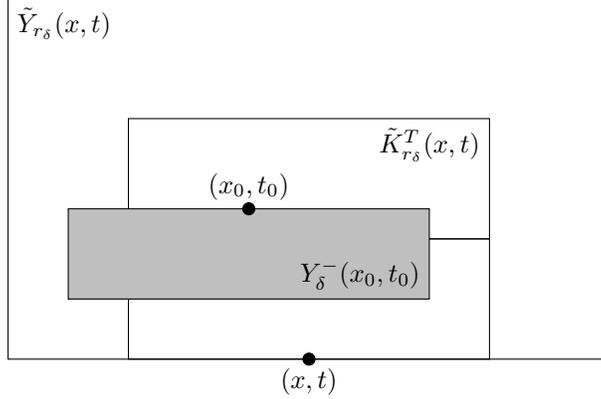
We remark that there exists a constant $\delta_1$ that depends only on $n$ such that if $\delta\leq \delta_1$, then  
\begin{equation}
\label{remark about sets}
\text{if $(x,t)\in \tilde{U}$ and $(x_0,t_0)\in \kdeltop(x,t)$, then $Y_{\delta}^-( x_0,t_0) \subset \ydel(x,t)\cap U $.}
\end{equation}
(We point out that in order for (\ref{remark about sets}) to hold,  it is important that $(x_0,t_0)$ is contained in  $\kdeltop(x,t)$, and not only in $\kdel(x,t)$.)  We take $\delta\leq\delta_1$.

We will be applying Proposition \ref{prop: more regularity for convolutions}. We first remark that although it is stated in $\Omega\times (0,T)$, it holds in $U=\Omega'\times (a,b)$ as well, with no other modifications. 

We recall that Proposition \ref{prop: more regularity for convolutions} gives a lower bound on the size of $\Psi_M^{+,\delta^{2\zeta}}(u^+)$, the set where $u^+$ has second-order expansions from below.  For the rest of the argument we will denote  $\Psi_M^{+,\delta^{2\zeta}}(u^+)$ simply by $\Psi_M^+$. We will show that for $M$ large enough there exists a point in the intersection of $\Psi_M^+$ and $\mathcal{C}$ by using the lower bounds on the sizes of $\mathcal{C}$ and  $\Psi_M^{+}$ (the latter is proved in Proposition \ref{prop: more regularity for convolutions}). Since Proposition  \ref{prop: more regularity for convolutions} bounds the size of the complement of $\Psi_M^{+}$ inside of sets of the form $\kdel(x,t)$, we first cover $\tilde{U}$ by $\{ \kdeltop(x,t): \  \  (x,t)\in \tilde{U}\}$. It is clear that  there exits a  finite collection $\{ \kdeltop(x_j,t_j): \  \  (x_j,t_j)\in \tilde{U}\}_{j=1}^s$ that covers $\tilde{U}$, where $s \leq  C_0 \delta^{-\zeta(n+2)}$, for $C_0$ a constant  that depends on $\diam \Omega '$, $T$, and $n$ . Since $\mathcal{C}\subset \tilde{U}$ and the $ \kdeltop(x_j,t_j)$ cover $\tilde{U}$, there exists an $i$ such that 
\[
|\mathcal{C}\cap \kdeltop(x_i,t_i)| \geq \frac{|\mathcal{C}|}{s} .
\]
We use (\ref{sizeC}) and the upper bound on $s$ to estimate the right-hand side of the previous line from below. We find,
\[
|\mathcal{C}\cap \kdeltop(x_i,t_i)| \geq \frac{\bar{c} \delta^{3\zeta (n+1)}}{C_0 \delta^{-\zeta (n+2)}} = \frac{\bar{c}}{C_0} \delta^{\zeta (4n+5) }.
\]
In view of the definition of $\tilde{U}$, we have $d((x_i,t_i), \bdry U)\geq 2r_\delta$ and  $t_i\leq b-r_\delta^2$; therefore, $Y_{r_\delta}(x_i,t_i)\subset \U$ and 
\begin{equation*}
d(Y_{r_\delta}(x_i,t_i), \bdry U)\geq r_\delta > \delta^{\zeta}.
\end{equation*}
Therefore, $u$  satisfies the hypotheses of Proposition \ref{prop: more regularity for convolutions} in $Y_{r_\delta}(x_i,t_i)$ with $\theta = \delta^{\zeta}$, $\rho=\delta$ and $r=r_\delta$. Let $M_0$ be the universal constant from Proposition \ref{prop: more regularity for convolutions}. Thus, for any $M>M_0$, there exists a set $\Psi^{+}_M$ (the ``good set" of $u^+$) with
\begin{equation}
\label{sizeKtil}
|\kdel(x_i,t_i)\setminus\Psi^{+}_M|\leq \frac{C_1r_\delta^{n+2}}{M^{\sigma}\delta^{\zeta(n+\sigma)}}(r_\delta^{-\sigma}+\delta^{-\zeta\sigma})\left(\linfty{u}{U}+1\right)^{n+\sigma},
\end{equation}
where $C_1$ is a universal constant. We take
\[
M=\left(\frac{2C_1 C_0 r_\delta^{n+2}\delta^{-\zeta(n+\sigma)}(r_\delta^{-\sigma}+\delta^{-\zeta \sigma})(\linfty{u}{\U}+1)^{n+\sigma}}{\bar{c}\delta^{\zeta (4n+5) }}\right)^{1/\sigma} +M_0 .
\]
We use our choice of $M$ to bound the right-hand side of (\ref{sizeKtil}) from above, and obtain
\[
|\kdel(x_i,t_i)\setminus\Psi^{+}_M|< \frac{\bar{c}}{C_0} \delta^{\zeta(n+\sigma)}=|\mathcal{C}\cap \kdeltop(x_i,t_i)| .
\]
Since we have $\kdeltop(x_i,t_i)\subset \kdel(x_i,t_i)$, the previous inequality implies,
\[
|\kdeltop(x_i,t_i)\setminus\Psi^{+}_M|<|\kdel(x_i,t_i)\setminus\Psi^{+}_M|< |\mathcal{C}\cap \kdeltop(x_i,t_i)| .
\]
Therefore, there exists a point $(x_0,t_0)\in \mathcal{C}\cap \Psi^{+}_M\cap \kdeltop(x_i,t_i)$. Because of (\ref{remark about sets}), we have $Y_{\delta}^-( x_0,t_0) \subset \ydel(x,t)\cap U $. Thus, by Proposition \ref{prop: more regularity for convolutions}, there exists a polynomial $P\in \PPi$ such that 
\[
P_t-F(D^2P)=0
\]
and, on  $Y_{\delta}^-( x_0,t_0)$,
\begin{equation}
\label{touch u+ belowone}
u^{+}(x,t)-u^{+}(x_0, t_0) \geq P(y,s)-C_2M\frac{\delta}{r_\delta}|x-x_0|^2 - \left(\frac{\delta}{r_\delta}C_2M+\frac{\delta}{r_\delta}\frac{1+\linfty{u}{U}}{\delta^{2\zeta}}\right) (t_0-t),
\end{equation}
where $C_2$ is a universal constant. We will now  estimate the sizes of the coefficients of the error terms on the right-hand side of the previous line. We first observe that, by the definition of $r_\delta$, there exists a constant $C_3$ that depends on $n$, $\linfty{u}{U}$ and $\linfty{Du}{U}$ so that
\begin{equation}
\label{estrdel}
\delta^\zeta<r_\delta \leq C_3\delta^{\zeta}.
\end{equation}
Therefore, 
\begin{equation}
\label{estM}
M\leq C_4 \bar{c}^{-1/\sigma}\delta^{-\frac{\zeta}{\sigma}(4n+3+2\sigma)}+M_0,
\end{equation}
where $C_4$ depends on $n$, $\lambda$, $\Lambda$, $\diam \Omega'$, $T$, $\linfty{u}{U}$, and $\linfty{Du}{\U}$.  Let us choose 
\begin{equation}
\label{choicec}
\bar{c}= \left(C_2C_4\right)^{\sigma}.
\end{equation}
We claim that, with this choice of $\bar{c}$, we have the following two bounds on the coefficients of the error terms in (\ref{touch u+ belowone}):
\begin{equation}
\label{boundxterms}
C_2 M\frac{\delta}{r_\delta}\leq (1+C_2M_0)\delta^{1/2},
\end{equation}
and,
\begin{equation}
\label{boundtterms}
\frac{\delta}{r_\delta}\frac{1+\linfty{u}{U}}{\delta^{2\zeta}} \leq \delta^{1/2}.
\end{equation}
Once these bounds are established, we may use them to bound the right-hand side of (\ref{touch u+ belowone}) from below, and then take $\bar{C}=2+C_2M_0$ to complete the proof of the lemma.

Let us first prove that (\ref{boundxterms}) holds. Since $r_\delta>\delta^{\zeta}$, we have
\[
\frac{\delta}{r_\delta}\leq \delta^{1-\zeta}.
\]
We use this, together with (\ref{estM}), to obtain,
\[
M\frac{\delta}{r_\delta}\leq C_4\bar{c}^{-1/\sigma}\delta^{1-\zeta -\frac{\zeta}{\sigma}(4n+3+2\sigma)} +M_0\delta^{1-\zeta} =C_4\bar{c}^{-1/\sigma}\delta^{1 -\frac{\zeta}{\sigma}(4n+3+3\sigma)} +M_0\delta^{1-\zeta}.
\]
Using our choice of $\zeta$ to simplify the exponent in the first term of the previous line yields,
\[
M\frac{\delta}{r_\delta}\leq C_4\bar{c}^{-1/\sigma}\delta^{1/2} +M_0\delta^{1-\zeta}.
\]
In addition, since $\zeta \leq 1/6$ (which is clear from the definition of $\zeta$), we have $1-\zeta\geq 5/6\geq 1/2$, so that $\delta^{1-\zeta}\leq \delta^{1/2}$. Therefore, we find
\[
M\frac{\delta}{r_\delta}\leq ( C_4\bar{c}^{-1/\sigma} +M_0)\delta^{1/2}.
\]
Multiplying by $C_2$ and using our choice of $\bar{c}$ yields the estimate (\ref{boundxterms}). We will now establish (\ref{boundtterms}). 
 We have $r_\delta>(1+\linfty{u}{U})\delta^{\zeta}$. Thus, we obtain
\[
\frac{\delta}{r_\delta}\frac{1+\linfty{u}{U}}{\delta^{2\zeta}}
<
\frac{\delta}{(1+\linfty{u}{U})\delta^\zeta}\frac{1+\linfty{u}{U}}{\delta^{2\zeta}}
 = \delta^{1-3\zeta}.
\]
Next, we  use  $\zeta\leq 1/6$ to bound $\delta^{1-3\zeta}$ from above by $\delta^{1/2}$ and obtain  (\ref{boundtterms}). 
The proof of the lemma is therefore complete.
\end{proof}

\begin{proof}[Proof of Lemma \ref{lem:helper3}]
We recall the representation formula for the upper monotone envelope of a function (Lemma \ref{lem:representation}):
\[
\bar{\Gamma}(w)(x_0,t_0) = \inf \{\zeta \cdot x_0 +h : \  \zeta \cdot x + h \geq w(x,t) \text{ for all } (x,t)\in Y^{-}_{2\rho}(x_0, b-r_\delta^2)\cap \{t\leq t_0\}\}.
\]
 Since $(x_0, t_0)$ is in the contact set of $u_{\delta^\zeta}^{+} -\delta^{1/4}t-c_1\delta^{\zeta} - v $ with its upper monotone envelope, the representation formula implies that there exist $\zeta\in \rr^n$ and $h\in \rr$ such that 
 \[
 \zeta \cdot x_0+h = \bar{\Gamma}(w)(x_0,t_0)=u_{\delta^\zeta}^{+}(x_0,t_0) -\delta^{1/4}t_0-c_1\delta^{\zeta} - v (x_0,t_0)
 \]
 and
\[
\zeta \cdot x +h\geq u_{\delta^\zeta}^{+}(x,t) -\delta^{1/4}t-c_1\delta^{\zeta} - v (x,t)
\]
for all $x$ and for all $t\leq t_0$ (so in particular, for all points in $Y^-_\delta(x_0,t_0)$). Rearranging the previous inequality we find,  for all $(x,t)\in Y^-_\delta(x_0,t_0)$, 
\[
v(x,t) \geq u_{\delta^\zeta}^{+}(x,t) -\delta^{1/4}t-c_1\delta^{\zeta} - \zeta \cdot x -h,
\]
with equality holding at $(x_0,t_0)$. Next, we use that $u_{\delta^\zeta}^{+}$ is touched from below at $(x_0,t_0)$ by $\phi(x,t)$ (this is assumption (\ref{touch u at x0t0phi}) of this lemma) to bound the first term on the right-hand side of the previous inequality from below. Thus we find, for  all $(x,t)\in Y^-_\delta(x_0, t_0)$, 
\begin{equation}
\label{vbelow}
v(x,t) \geq u_{\delta^{\zeta}}^+(x_0,t_0) + \phi(x,t) -\delta^{1/4}t-c_1\delta^{\zeta} - \zeta \cdot x -h,
\end{equation}
with equality at $(x_0,t_0)$. By assumption, we have $Y^-_\delta(x_0, t_0)\subset U$. Since $v$ is a $\delta$-supersolution of (\ref{eqn:mainLemma}) on $U$ and (\ref{vbelow}) holds on $Y^-_\delta(x_0, t_0)$ with equality at $(x_0,t_0)$, we find
\[
\phi_t(x_0,t_0) - \delta^{1/4} - F\left(D^2\phi(x_0,t_0)\right)\geq 0.
\]
Thus the proof of the lemma is complete.
\end{proof}

\section{Error estimate}
\label{section: comparison}
In this section we give the precise statement and the proof of our main result.
\begin{thm}
\label{main result}
Assume  (F\ref{ellipticity}), (F\ref{last prop of F}) and (U1). Assume $u\in C^{0,\eta}(\Omega\times (0,T))\cap C_x^{0,1}(\Omega \times (0,T))$ is a solution of 
\begin{equation}
\label{maineqn}
u_t-F(D^2u)=0 \text{ in } \Omega\times (0,T)
\end{equation}
and assume $\{v_{\delta}\}_{\delta>0}$ is a family of $\delta$-supersolutions  (resp. $\delta$-subsolutions) of (\ref{maineqn}) such that, for some $M<\infty$ and for all $\delta>0$, 
\begin{equation}
\label{asm:mainthm}
\etanorm{v_\delta}{\Omega\times (0,T)}\leq M
\end{equation}
and
\[
u-v_\delta\leq 0  \text{ (resp. }v_\delta-u \leq 0\text{) on }\bdry(\Omega \times (0,T)).
\]
There exist a universal constant $\bar{\delta}$, a constant $\bar{\alpha}$ that depends on $n$, $\lambda$, $\Lambda$ and $\eta$,  and a constant  $\bar{c}$ that depends on $n$, $\lambda$, $\Lambda$, $\diam\Omega$, $T$, $M$, $\etanorm{u}{\Omega \times (0,T)}$, and $\linfty{Du}{\Omega\times (0,T)}$, such that  for all $\delta\leq \bar{\delta}$,
\begin{equation}
\label{eqmainresult}
\sup_{\Omega\times (0,T)}u-v_\delta \leq  \bar{c}\delta^{\bar{\alpha}} \   \     (\text{resp. } \sup_{\Omega\times (0,T)} v_\delta-u \leq \bar{c}\delta^{\bar{\alpha}}).
\end{equation}
\end{thm}

\begin{rem}
\label{remAboutLips}
The assumption $u\in C^{0,\eta}(\Omega\times (0,T))\cap C_x^{0,1}(\Omega \times (0,T))$ is satisfied in the following situation. Assume  (F\ref{ellipticity}), (F\ref{last prop of F}), (U1), and that $\partial \Omega$ is sufficiently regular. Take $g\in C^{1,\alpha}(\bdry (\Omega\times (0,T)))$. Then, according to the interior regularity estimates of Theorem \ref{thm:Wang's c alpha thm} and the boundary estimates of \cite[Section 2]{WangII}, the solution $u$ of the boundary value problem
\begin{equation*}\left\{
\begin{array}{l l}
u_t-F(D^2 u)=0 &\quad \text{ in }\Omega\times(0,T),\\
u=g &\quad \text{ on }\bdry (\Omega\times (0,T)),
\end{array}\right.
\end{equation*}
 is indeed Lipschitz continuous in $x$ and H\"older continuous in $t$ on all of $\Omega\times (0,T)$, with 
 \[
 \etanorm{u}{\Omega\times (0,T)}, \linfty{Du}{\Omega\times(0,T)} \leq C,
 \]
 where $C$ depends on $n$, $\lambda$, $\Lambda$,  $||g||_{C^{1,\alpha}(\bdry( \Omega\times (0,T)))}$, $\diam \Omega$ and the regularity of $\partial \Omega$.
\end{rem}
\begin{proof}[Proof of Theorem \ref{main result}]
We give the proof of the bound on $\sup (u-v_\delta)$ in the case that $v_\delta$ is a $\delta$-supersolution; the proof of the other case is very similar. Let us take $\delta\leq \tilde{\delta}$, where $\tilde{\delta}$ is the universal constant given by Proposition \ref{main lemma}. To simplify notation, throughout the remainder of this proof we will use $c$ and $c_i$ with $i=0,1,2,...$, to denote positive constants that  depend only on $n$, $\lambda$, $\Lambda$, $\diam\Omega$, $T$, $M$, $\etanorm{u}{\Omega \times (0,T)}$, and $\linfty{Du}{\Omega\times(0,T)}$.

We regularize $v_\delta$ by taking inf-convolution:  let $v^-$ denote $(v_\delta)^-_{\delta^\zeta, \delta^\zeta}$ and let $U$ denote $U^{\delta^\zeta,\delta}$. For the convenience of the reader, we write down  the definition of $U$ explicitly:
\[
U= \left\{(x,t)\in \Omega\times (0,T):\  \   d_e((x,t),\bdry(\Omega \times (0,T))) \geq 2\delta^{\zeta/2}\linfty{v_\delta}{\Omega \times (0,T)}^{1/2} + \delta \right\}.
\]
We will apply part (1) of Proposition \ref{main lemma} to $v^-$ in $U$ once we verify its hypotheses. First, we see that $U$ is of the form $\Omega'\times (a,b)$, where $\Omega'$ satisfies (U1) and $(a,b)\subset (0,T)$. Next, we use items (\ref{item:inf conv is semiconcave}) and (\ref{item:lip bd on v}) of Proposition \ref{properties of inf-convolution}  with $\theta=\delta^{\zeta}$ and find that we have $D^2v^- \leq \delta^{-\zeta}I$ in  $U$ in the sense of distributions and $\tlipsemi{v^-}{U}\leq 3T\delta^{-\zeta}$. Therefore, $v^-$ satisfies assumptions (\ref{assump:d2v}) and (\ref{assump:vt}) of Proposition \ref{main lemma}. Finally,  item (\ref{inf conv is delta soln}) of Proposition \ref{properties of inf-convolution}  says that $v^-$ is a $\delta$-supersolution of  (\ref{eqn:mainLemma}) in $U$. 

Now let $r_\delta$ and $\tilde{U}$ be as in the statement of Proposition \ref{main lemma}:
\begin{align*}
r_\delta &= 9\sqrt{n}(1+\linfty{u}{U}+2\linfty{Du}{U})\delta^{\zeta}, \text{ and }\\
\tilde{U} &= \left\{(x,t)\in U:\  \   d((x,t),\bdry  U) \geq  2r_\delta, \text{ and }t\leq T-r_\delta^2\right\}.
\end{align*}
Let us  apply Proposition \ref{main lemma} with $u-\left(\sup_{\bdry \tilde{U}} (u-v^-)\right)$ instead of just $u$ itself (this ensures that the  hypothesis (\ref{main prop wlog}) is satisfied.) We conclude
\begin{equation}
\label{consequence of Propo}
\sup_{\tilde{U}}(u-v^-) \leq \tilde{c}\delta^{\tilde{\alpha}}+ \sup_{\bdry \tilde{U}}(u-v^-).
\end{equation}
We will now use this  estimate, the regularity properties of $u$ and $v_\delta$, and the fact that the distance between $\bdry \tilde{U}$ and $\bdry (\Omega\times (0,T))$ is small to deduce the bound (\ref{eqmainresult}).

First, we use the definitions of $U$, $\tilde{U}$ and $r_\delta$ to estimate  the distance between $\bdry \tilde{U}$ and $\bdry (\Omega\times (0,T))$: 
\begin{equation}
\label{distances}
d( \tilde{U}, \bdry(\Omega\times(0,T)))=\inf_{(x,t)\in \tilde{U}}\{d((x,t),\bdry (\Omega\times(0,T))\}\leq 
2r_\delta+2\delta^\frac{\zeta}{2}\linfty{v_\delta}{\Omega\times(0,T)}^{1/2}+\delta\leq c_0 \delta^{\zeta/2},
\end{equation}
where the last inequality follows from the definition of $r_\delta$ and since $\frac{\zeta}{2}<\zeta<1$. 

Next, since $v^-\leq v_\delta$ holds on $\Omega\times (0,T)$, and we have $u\in C^{0,\eta}(\Omega\times (0,T))$, $v_\delta\in  C^{0,\eta}(\Omega \times (0,T))$, and $u\leq v_\delta$ on $\bdry (\Omega\times (0,T))$, the estimate (\ref{distances}) implies that there exists a  constant $c_1$ such that
\begin{equation}
\label{bound on bdry U}
\sup_{\bdry \tilde{U}} (u-v_\delta)\leq c_1 \delta^{\frac{\zeta\eta}{2}}
\end{equation}
and 
\begin{equation}
\label{bound on whole set}
 \sup_{\Omega \times (0,T)} (u- v_\delta) \leq \sup_{\tilde{U}} (u-v^-) +c_1 \delta^{\frac{\zeta\eta}{2}}.
\end{equation}

We use (\ref{consequence of Propo}) to bound the first term on the right-hand side of (\ref{bound on whole set}) from above and obtain,
\begin{equation}
\label{bdwholesetalmost}
\sup_{\Omega\times (0,T)} (u-v_\delta)\leq \tilde{c}\delta^{\tilde{\alpha}}+ \sup_{\bdry \tilde{U}}(u-v^-)+c_1 \delta^{\frac{\zeta\eta}{2}}.
\end{equation}
Thus, to establish (\ref{eqmainresult}) it is left to bound the right-hand side of (\ref{bdwholesetalmost}) from above. To this end, item (\ref{item:bound from below on inf conv}) of Proposition \ref{properties of inf-convolution} applied with $\theta=\delta^{\zeta}$, and the assumption (\ref{asm:mainthm}) of this theorem, imply, for all $(x,t)\in U$,
\[
v^-(x,t) \geq v_\delta(x,t)- \etasemi{v_\delta}{\Omega \times (0,T)}^{\frac{2}{2-\eta}} \delta^{\frac{\zeta\eta}{2-\eta}} \geq v_\delta(x,t)- M^{\frac{2}{2-\eta}} \delta^{\frac{\zeta\eta}{2-\eta}}.
\]
We apply this estimate with  $(x,t)\in \bdry\tilde{U}\subset U$ to obtain a bound from above for the second term on the right-hand side of (\ref{bdwholesetalmost}): 
\[
\sup_{\bdry \tilde{U}}(u-v^-)\leq \sup_{\bdry \tilde{U}}(u - v_\delta) +M^{\frac{2}{2-\eta}} \delta^{\frac{\zeta\eta}{2-\eta}}.
\]
We now use (\ref{bound on bdry U}) to bound the first term on the right-hand side of the previous line and obtain
\[
\sup_{\bdry \tilde{U}}(u-v^-) \leq c_1 \delta^{\frac{\zeta\eta}{2}}+ M^{\frac{2}{2-\eta}} \delta^{\frac{\zeta\eta}{2-\eta}}\leq c_2\delta^{\frac{\zeta\eta}{2}} .
\]
Next we use the previous line to bound the second term on the right-hand side of (\ref{bdwholesetalmost}) from above, and obtain
\[
 \sup_{\Omega \times (0,T)} (u- v_\delta) \leq\tilde{c}\delta^{\tilde{\alpha}} +c_2\delta^{\frac{\zeta\eta}{2}} +c_1\delta^{\frac{\zeta\eta}{2}}.
\]
We thus take $\bar{c}=\tilde{c}+c_2+c_1$ and $\bar{\alpha}=\min\{\tilde{\alpha}, \frac{\zeta\eta}{2}\}$ to complete the proof of the theorem.
\end{proof}

\section{Discrete approximation schemes}
\label{section: approx}
We now present the error estimate for finite difference approximation schemes. First, we introduce the necessary notation and assumptions, closely following \cite{Approx schemes} and \cite{Kuo Trudinger parabolic diff ops}. In the next section we give the full statement of the error estimate and its proof. The space-time mesh is denoted by $E$:
\[
E=h \zz^n \times h^2 \zz=\{(x,t):\  \   \text{$x=(m_1,..,m_n)h$, $t=mh^2$,  where $ m,m_1,...,m_n\in \zz$}\}.
\]
We fix some $N>1$ and define the subset $Y$ of $E$ by 
\[
Y= \{ y\in h\zz^n:\  \    0<|y|<hN\}.
\]
Next we introduce finite difference operators for a function $u$:
\begin{align*}
\delta^-_{\tau} u (x,t) &= \frac{1}{h}(u(x,t) - u(x,t-h^2)),\\
\delta_y^2 u (x,t)&=\frac{1}{|y|^2}(u(x+y,t)+u(x-y,t)-2u(x,t)), \text{ and}\\
\delta^2u(x,t) &=\{\delta^2_y u (x,t):\  \   \   y\in Y\}.
\end{align*}
An \emph{implicit finite difference operator}  is an operator of the form
\[
\Ph [u] (x,t) = \delta^-_{\tau} u (x,t) - \mathcal{F}_h(\delta^2 u(x,t)),
\]
where $\mathcal{F}_h: \rr^Y \rightarrow \rr$ is locally Lipschitz. We denote points in $\rr^Y$ by $r=(r_1,...,r_{|Y|})$. We say an operator  is \emph{monotone} if it satisfies:
\begin{enumerate}[({S}1)] 
\item there exists a constant $\lambda_0$ and $\Lambda_0$ such that for all $i=1,...,|Y|,$
\[
\lambda_0\leq \frac{\partial \mathcal{F}_h}{\partial r_i}\leq \Lambda_0.
\]
\end{enumerate}
This is equivalent to the definition given in the introduction. 

A scheme $\Ph$ is said to be \emph{consistent with $F$} if for all $\phi \in C^3(\Omega\times(0,T))$,
\[
\sup|\phi_t-F(D^2\phi)-\Ph[\phi]| \rightarrow 0 \text{ as } h\rightarrow 0.
\]
In  \cite[Section 4]{Kuo Trudinger parabolic diff ops}, it was shown that if a nonlinearity $F$ satisfies (F\ref{ellipticity}), then there  exists a monotone implicit scheme  $\Ph$ that is consistent with $u_t-F(D^2u)=0$, and the constants $\lambda_0$ and $\Lambda_0$ depend only on $n$, $\lambda$ and $\Lambda$. In \cite{Kuo Trudiner elliptic ops}, a monotone and consistent approximation scheme for elliptic equations is explicitly constructed, and the construction in the parabolic case is analogous. 

In order to obtain an error estimate, we need to make an assumption that quantifies the above rate of convergence. As in \cite{Approx schemes}, we assume:
\begin{enumerate}[({S}2)] 
\item
there exists a positive constant $K$ such that for all $\phi\in C^3(\Omega\times(0,T))$,
\[
\sup|\phi_t-F(D^2\phi)-\Ph[\phi]|\leq K(h+ h\linfty{D_x^3\phi}{\Omega\times (0,T)} + h^2\linfty{\phi_{tt}}{\Omega\times (0,T)}).
\]
\end{enumerate} 
Schemes that satisfy (S\ref{consistent}) are said to be \emph{consistent with an error estimate for $F$ in $\Omega\times (0,T)$ with constant $K$}. 

Given $\Omega\times (0,T)\subset \rr^{n+1}$, we denote the mesh points $ (\Omega\times (0,T)) \cap E$ by $\Umesh$.  We divide $\Umesh$ into  interior and boundary points relative to the operator $\Ph$. We define
\[
\Umesh^i=\left\{p\in \Umesh :\  \   \    d(p, \bdry\Omega \times (0,T))\geq Nh\right\}
\]
to be the interior points and
\[
\Umesh^b = \Umesh \setminus \Umesh^i
\]
to be the boundary points. Notice that $\Ph[u](x,t)$ depends only  on $u(x+y,s)$ for $0\leq|y|<hN$ and for $t-h^2\leq s\leq t$.

The discrete H\"older seminorm of a  function  $u$ on $\Umesh$  is defined to be
\[
\etasemi{u}{\Umesh} = \sup_{p,q \in \Umesh}\frac{|u(p)-u(q)|}{d(p,q)^{\eta}}
\]
and the discrete  H\"older norm is
\[
\etanorm{u}{\Umesh} = \linfty{u}{\Umesh}+\etasemi{u}{\Umesh} .
\]

In \cite[Section 4]{Kuo Trudinger parabolic diff ops} it is shown that solutions of the discrete equation $\Ph[v_h] = 0$ are uniformly equicontinuous. We summarize this result:
\begin{thm}
\label{KTthm}
Assume that $\Ph$ is an implicit monotone finite difference scheme and that $v_h$ is a solution of $\Ph[v_h] = 0$. There exists a constant $C$ that depends only on $n$, $\lambda_0$ and $\Lambda_0$ such that for all $h\in (0,1)$,
\[
\etanorm{v_h}{\Umesh^i} \leq C.
\]
\end{thm}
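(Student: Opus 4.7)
The plan is to follow the strategy of Kuo and Trudinger \cite{Kuo Trudinger parabolic diff ops}, who built a discrete analogue of the Krylov--Safonov theory for monotone approximation schemes. The fundamental observation is that the monotonicity assumption (S1), namely $\lambda_0 \le \partial \mathcal{F}_h/\partial r_i \le \Lambda_0$, plays the role of uniform ellipticity on the mesh: by the mean value theorem, the difference of any two solutions $u_h$ and $w_h$ of $\Ph[\cdot]=0$ satisfies a linear discrete equation whose coefficients lie in $[\lambda_0,\Lambda_0]$. This immediately yields a discrete comparison principle on $\Umesh$, which in turn bounds $\linfty{v_h}{\Umesh}$ in terms of the boundary values.

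The main task is therefore to obtain the discrete H\"older seminorm estimate. I would first establish a discrete parabolic Alexandroff--Bakelman--Pucci (ABP) inequality on $\Umesh$. The idea is to define, for a mesh function $u_h$, a discrete parabolic monotone envelope analogous to $\uGamma$ of Definition \ref{defn env}, and to show that the infimum of $u_h$ over $\Umesh^i$ is controlled by an $\ell^{n+1}$-type sum of the quantities $\delta_y^2 u_h$ over the discrete contact set. The extended stencil $Y$ (with $|y|<hN$) supplies sufficiently many second-order difference directions to reconstruct the full Hessian information as $h\to 0$; this is where the constants $\lambda_0,\Lambda_0$ enter explicitly.

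With the discrete ABP in hand, the Krylov--Safonov dyadic iteration transfers to the mesh: a nonnegative discrete supersolution which is bounded below on a fixed fraction of a mesh parabolic cube $Q_r\cap E$ must be bounded below on a concentric smaller cube, uniformly in $h$. Iterating this measure-decay estimate gives a discrete weak Harnack inequality for discrete solutions of the Pucci-type inequalities naturally associated with $\Ph$. Applying the weak Harnack inequality to the oscillations of $v_h$ on nested mesh parabolic cubes produces geometric oscillation decay, which translates directly into $\etasemi{v_h}{\Umesh^i}\le C$ with $C$ depending only on $n$, $\lambda_0$, $\Lambda_0$, and the previously obtained $L^\infty$ bound.

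The main obstacle is the discrete ABP step. In the continuous setting (see Proposition \ref{pre ABP} and \cite[Section 4]{ImbertSilvestre}) the proof relies on the area formula applied to the gradient of the monotone envelope; the discrete version requires one to define a normal-map-like object at mesh points of the discrete envelope, to handle the parabolic scaling (spatial step $h$, temporal step $h^2$), and to ensure that the constants do not degenerate as $h\to 0$ or as the stencil width $N$ grows. Once this technical construction is carried out, the remaining growth lemma and Harnack iteration are discrete transcriptions of the classical arguments of \cite{WangI}, and the H\"older estimate follows in the standard way.
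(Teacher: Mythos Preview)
The paper does not supply its own proof of this theorem: it is quoted as a known result from \cite[Section 4]{Kuo Trudinger parabolic diff ops}, and the paper simply cites that reference. Your outline is precisely the Kuo--Trudinger strategy from that paper (discrete ABP, discrete Krylov--Safonov growth lemma, weak Harnack, oscillation decay), so there is nothing to compare; your plan matches the cited source and is correct as a sketch.
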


\subsection{Inf and sup convolutions for approximation schemes}
\begin{defn} 
Given $\theta>0$ and a mesh function $v: \Umesh\rightarrow \rr$, we define the \emph{inf- convolution} $\convmesh$ and the \emph{sup- convolution} $v^+_{\theta,\theta}$ of $v$ at $(x,t)\in \Omega\times (0,T)$ by
\[
\convmesh(x,t) = \inf_{(y,s)\in \Umesh} \left\{ v(y,s) + \frac{|x-y|^2}{2\theta}+\frac{|t-s|^2}{2\theta}\right\}
\]
and
\[
v^{+}_{\theta, \theta}(x,t) = \sup_{(y,s)\in \Umesh} \left\{ v(y,s) - \frac{|x-y|^2}{2\theta}-\frac{|t-s|^2}{2\theta}\right\}.
\]
\end{defn}
\begin{defn}
Given $\theta>0$ and $v\in C^{0,\eta}(\Umesh)$, we introduce  the quantity
\[
\omega(h, \theta)=nh+2\theta^{1/2}\linfty{v}{\Umesh}
\] 
and the set
\[
U^h_{\theta} = \{ p\in \Omega \times (0,T) :\  \    d_e(p,\bdry{(\Omega \times (0,T))}) \geq \omega(h, \theta) +Nh\}.
\]
\end{defn}
In the appendix we summarize the basic properties of inf- and sup- convolutions of mesh functions  (see Proposition \ref{prop: inf-conv for mesh}).

It is a classical fact of the theory of viscosity solutions that if $u$ is the viscosity solution of $u_t-F(D^2u)=0$ in $\Omega\times (0,T)$, then the sup-convolution of $u$ is a subsolution of the same equation. In the following proposition, we establish a similar relationship between solutions $v_h$ of discrete equations and $\delta$-solutions of  $u_t-F(D^2u)=0$. This is a key step in establishing an error estimate for discrete approximation schemes.
\begin{prop}
\label{item: mesh solves equation} 
Assume that $\Ph$ is a monotone and implicit scheme consistent with an error estimate for $F$ with constant $K$. Assume $v\in C^{0,\eta}(\Umesh)$.
\begin{enumerate}
\item If $v$ satisfies $\Ph[v]  \geq 0$ in $ \Umesh^i$ then  $\convmesh$ is a $\delta$-supersolution of 
\[
u_t - F(D^2u) = - Kh
\]
 in $U^h_{\theta}$, with $\delta= Nh$.
 \item If  $v$ satisfies $\Ph[v]  \leq 0$ in $ \Umesh^i$, then $v^{+}_{\theta, \theta}$ is $\delta$-subsolution of 
 \[
 u_t - F(D^2u) =  Kh
 \]
  in $U^h_{\theta}$, with  $\delta= Nh$.
\end{enumerate}

\end{prop}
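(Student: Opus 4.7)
The plan is to use the standard translation trick for inf-convolutions, combined with the monotonicity and consistency of $\Ph$ on paraboloids. I will only describe part (1); part (2) is symmetric.

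First I would fix $(x,t) \in U^h_\theta$ with $Y^-_{Nh}(x,t) \subset U^h_\theta$ and a paraboloid $P \in \PPi$ that touches $\convmesh$ from below at $(x,t)$ on $Y^-_{Nh}(x,t)$. By compactness the infimum in the definition of $\convmesh(x,t)$ is attained at some $(y^*,s^*) \in \Umesh$, and the standard inf-convolution estimates (item (1) of the mesh version of Proposition \ref{properties of inf-convolution}, i.e.\ Proposition \ref{prop: inf-conv for mesh} in the appendix) give $|x-y^*| + |t-s^*|^{1/2} \leq c\,\omega(h,\theta)^{1/2}$. Since $(x,t)\in U^h_\theta$, this ensures $(y^*,s^*) \in \Umesh^i$ so that $\Ph[v](y^*,s^*)$ is defined and, moreover, that the stencil of $\Ph$ based at $(y^*,s^*)$, when translated by $(x-y^*, t-s^*)$, lands entirely inside $Y^-_{Nh}(x,t)$.

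The key step is then the translation argument. Define
\[
\tilde{P}(y,s) \;=\; P(y + x - y^*,\, s + t - s^*) \;-\; \frac{|x-y^*|^2}{2\theta} \;-\; \frac{|t-s^*|^2}{2\theta}.
\]
Because $\tilde P$ differs from a translate of $P$ only by a constant, $\tilde P \in \PPi$, with $\tilde P_t \equiv P_t$ and $D^2\tilde P \equiv D^2 P$. At $(y^*,s^*)$ one checks $\tilde P(y^*,s^*) = P(x,t) - \tfrac{|x-y^*|^2}{2\theta} - \tfrac{|t-s^*|^2}{2\theta} = \convmesh(x,t) - \tfrac{|x-y^*|^2}{2\theta} - \tfrac{|t-s^*|^2}{2\theta} = v(y^*,s^*)$. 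For any other mesh point $(\bar y, \bar s)$ whose translate $(\bar y + x-y^*, \bar s + t-s^*)$ lies in $Y^-_{Nh}(x,t)$, the inf-convolution inequality gives
\[
v(\bar y,\bar s) \;\geq\; \convmesh(\bar y + x-y^*,\,\bar s + t-s^*) - \tfrac{|x-y^*|^2}{2\theta} - \tfrac{|t-s^*|^2}{2\theta} \;\geq\; \tilde P(\bar y,\bar s),
\]
where the second inequality uses that $P\leq \convmesh$ on $Y^-_{Nh}(x,t)$. Thus $\tilde P$ touches $v$ from below at $(y^*,s^*)$ on every stencil point of $\Ph$ based at $(y^*,s^*)$.

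Finally I would combine monotonicity and consistency. Because $\tilde P \leq v$ at every stencil neighbor with equality at $(y^*,s^*)$, the definitions of $\delta^-_\tau$ and $\delta^2_y$ give $\delta^-_\tau \tilde P \geq \delta^-_\tau v$ and $\delta^2_y \tilde P \leq \delta^2_y v$ at $(y^*,s^*)$; assumption (S\ref{monotone}) (in the derivative form with $\lambda_0 \le \partial \mathcal F_h/\partial r_i \le \Lambda_0$) then yields $\mathcal F_h(\delta^2 \tilde P) \le \mathcal F_h(\delta^2 v)$, so $\Ph[\tilde P](y^*,s^*) \geq \Ph[v](y^*,s^*) \geq 0$. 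Since $\tilde P$ is quadratic in $x$ and linear in $t$, $D_x^3\tilde P = 0$ and $\tilde P_{tt} = 0$, so (S\ref{consistent}) collapses to $|\tilde P_t - F(D^2 \tilde P) - \Ph[\tilde P]| \le Kh$. Combining these, $\tilde P_t - F(D^2\tilde P) \geq -Kh$, and because $\tilde P_t = P_t$ and $D^2\tilde P = D^2P$, we get $P_t - F(D^2 P) \geq -Kh$, which is exactly the definition of $\convmesh$ being a $\delta$-supersolution of $u_t - F(D^2 u) = -Kh$ with $\delta = Nh$.

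The main obstacle, and the reason the set $U^h_\theta$ is defined with the particular margin $\omega(h,\theta) + Nh$, is verifying the geometry in the translation step: one needs both that $(y^*,s^*)$ together with its full $\Ph$-stencil lies in $\Umesh^i$, and that the translated stencil lies in $Y^-_{Nh}(x,t)$, so that the inf-convolution inequality for $\convmesh$ can be chained with the touching of $\convmesh$ by $P$. Everything else is bookkeeping; the calculus of finite differences on polynomials is trivial because $\PPi$ has no mixed space-time terms and the second spatial derivative is constant.
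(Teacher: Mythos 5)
Your proposal is correct and follows essentially the same route as the paper: pass to the point where the infimum in $\convmesh(x,t)$ is attained, use the inf-convolution inequality to transfer the finite-difference comparison from $\convmesh$ at $(x,t)$ to $v$ at that point, invoke the monotonicity of $\mathcal{F}_h$, and apply the consistency estimate to a polynomial with vanishing $D_x^3$ and $\partial_{tt}$. The paper simply phrases the translation step as the chain $\delta^2_yP(x,t)\leq\delta^2_y\convmesh(x,t)\leq\delta^2_y v(x^*,t^*)$ (and its analogue for $\delta^-_\tau$) and applies consistency to $P$ at $(x,t)$ rather than to your explicitly translated $\tilde P$ at $(y^*,s^*)$ — since $\tilde P_t=P_t$ and $D^2\tilde P=D^2P$, the two formulations are the same computation.
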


\begin{proof}
We give the proof of item (1); the proof of item (2) is analogous.

Let us suppose $(x,t)\in U^h_\theta$ is such that $Y^-_{Nh}(x,t)\subset U^h_\theta$ and  that  $P\in \PPi$ is such that 
\begin{equation}
\label{Ptouchvh}
P\leq \convmesh \text{ on }Y^-_{Nh}(x,t), \text{ and } P(x,t)=\convmesh(x,t).
\end{equation}
We use (\ref{Ptouchvh}) and the definition of the operators $\delta^2_y$ and $\delta^-_\tau$ to obtain, for each $y$ with $0<|y|\leq hN$,
\begin{equation}
\label{eq:implicita}
\delta^2_yP(x,t) \leq \delta^2_y\convmesh(x,t), \text{ and } \delta^-_\tau P(x,t) \geq \delta^-_\tau \convmesh(x,t).
\end{equation}
(We remark that for this to hold, it is essential that $P\leq v$ on \emph{all} of $Y^-_{Nh}(x,t)$.) Now let $(x^*, t^*)$ be a point at which the infimum is achieved in the definition of $v^-_{\theta,\theta}(x,t)$. According to item (\ref{opsandconv}) of Proposition \ref{prop: inf-conv for mesh}, we have 
\begin{equation}
\label{deltash}
\delta^2_y\convmesh(x,t)\leq \delta^2_y v(x^*,t^*) \text{ and } \delta^-_\tau \convmesh(x,t) \geq \delta^-_\tau v(x^*, t^*).
\end{equation}
We now use the first inequality in (\ref{deltash}) to estimate the right-hand side of the first inequality  in (\ref{eq:implicita}) from above, and we use the second inequality in (\ref{deltash}) to estimate the right-hand side of the second inequality  in (\ref{eq:implicita}) from below. We find,
\begin{equation}
\label{eq:implicit}
\delta^2_yP(x,t) \leq \delta^2_y v(x^*,t^*), \text{ and } \delta^-_\tau P(x,t) \geq  \delta^-_\tau v(x^*, t^*).
\end{equation}
Item (\ref{eq: d(q,p^*)}) of Proposition \ref{prop: inf-conv for mesh} yields $(x^*,t^*)\in \Umesh^i$. Since $v$ satisfies $\Ph[v]\geq 0$ in $\Umesh^i$, we obtain, 
\[
0\leq \Ph[v](x^*, t^*) = \delta^-_\tau v(x^*, t^*)  - \mathcal{F}_h(\delta^2 v(x^*, t^*))  .
\]
Using (\ref{eq:implicit}) and the monotonicity of $\mathcal{F}_h$ to bound the right-hand side of the previous line yields
\begin{equation}
\label{eq:scheme at x*}
0\leq \delta^-_\tau P(x,t) - \mathcal{F}_h(\delta^2P(x,t)).
\end{equation}
We use that $\Ph$ is consistent with an error estimate and find,
\[
|P_t(x,t) -F(D^2P(x,t))- (\delta^-_\tau P(x,t)   - \mathcal{F}_h(\delta^2 P(x,t) ))|
\leq Kh.
\]
Together with (\ref{eq:scheme at x*}), this implies
\[
P_t -F(D^2P) \geq -Kh,
\]
as desired.
\end{proof}

\section{Error estimate for approximation schemes}
\label{section:error est approx}
In this section we give the precise statement and proof of Theorem \ref{thm:mesh comparison vague}.
\begin{thm}
\label{prop: convergence for approx schemes}
Assume (F\ref{ellipticity}), (F\ref{last prop of F}) and (U1). 
Assume $u\in C^{0,\eta}(\Omega\times (0,T))\cap C_x^{0,1}(\Omega \times (0,T))$ is a solution of
\begin{equation}
\label{h thm main eqn}
u_t-F(D^2u)=0
\end{equation}
in $\Omega \times (0,T)$.  Assume that $\Ph$ is an implicit monotone  scheme consistent with an error estimate for (\ref{h thm main eqn}) and that $v_h$ satisfies $\Ph[v_h] \geq 0$ (resp. $\Ph[v_h] \leq 0$) in $\Umesh$ for all $h>0$. Assume that for some constant $M<\infty$ and for all $h>0$,
\[
\etanorm{v_h}{\Umesh^i}\leq M
\]
and
\begin{equation}
\label{hassum}
u-v_h\leq 0 \text{ (resp. } u-v_h\geq 0\text{) on }\Umesh^b.
\end{equation}
There exists a constant $\bar{h}$ that depends only on $\lambda$, $\Lambda$, $n$ and $N$, a constant $\bar{\alpha}$ that depends on $n$, $\lambda$, $\Lambda$ and $\eta$,  and a constant  $\bar{c}$ that depends on $n$, $N$, $\lambda$, $\Lambda$, $\diam\Omega$, $T$, $K$, $M$, $\etanorm{u}{\Omega\times (0,T)}$, and $\linfty{Du}{\Omega\times(0,T)}$, such that for all $h\leq \bar{h}$,
\begin{equation*}
\sup_{\Umesh}u-v_h \leq  \bar{c}h^{\bar{\alpha}} \   \     (\text{resp. } \sup_{\Umesh}v_h-u \leq \bar{c}h^{\bar{\alpha}}).
\end{equation*}
\end{thm}

\begin{rem}
 Suppose that $u$ is the solution of the boundary value problem
\begin{equation*}\left\{
\begin{array}{l l}
u_t-F(D^2 u)=0 &\quad \text{ in }\Omega\times(0,T),\\
u=g &\quad \text{ on }\bdry (\Omega \times (0,T)),
\end{array}\right.
\end{equation*}
and $v_h$ satisfies
\begin{equation*}\left\{
\begin{array}{l l}
\Ph[v_h]=0 &\quad \text{ in }\Umesh^i,\\
v_h=g &\quad \text{ on }\Umesh^b,
\end{array}\right.
\end{equation*}
where $\partial \Omega$ is sufficiently regular and  $g\in C^{1,\alpha}( \Omega\times (0,T))$. Then, as explained in Remark \ref{remAboutLips}, $u$ satisfies the hypotheses of Theorem \ref{prop: convergence for approx schemes}, and $v_h$ satisfies the hypotheses of Theorem \ref{prop: convergence for approx schemes} because of  Theorem \ref{KTthm}.
\end{rem}

\begin{proof}[Proof of Theorem \ref{prop: convergence for approx schemes}]
We give the proof of the bound on $\sup (u-v_h)$; the proof of the other case is very similar. We will apply Proposition \ref{main lemma} with $\delta=Nh$. We define $\bar{h}$ to be $\bar{h}=\tilde{\delta}N^{-1}$, where $\tilde{\delta}$ is the universal constant given by Proposition \ref{main lemma}. To simplify notation, throughout the remainder of this proof we will use $c$ and $c_i$ with $i=0,1,2,...,$ to denote positive constants that depend only on $n$, $N$, $\lambda$, $\Lambda$, $\diam\Omega$, $T$, $K$, $M$, $\etanorm{u}{\Omega\times (0,T)}$, and $\linfty{Du}{\Omega\times(0,T)}$. In addition, $c$ may change from line to line. 

First we regularize $v_h$ by taking inf-convolution: we denote $v^-= (v_h)^-_{h^{\zeta}, h^\zeta}$ and $U^h_{h^{\zeta}}$ by $U$. For the convenience of the reader, we write down the definition of $U$ explicitly:
\[
U= \left\{(x,t)\in \Omega\times (0,T) :\  \    d((x,t),\bdry(\Omega \times (0,T))) \geq \omega(h,h^\zeta)+Nh\right\},
\]
and
\[
\omega(h, h^\zeta)=nh+2h^{\zeta/2}\linfty{v_h}{\Umesh}.
\] 
Then, according to item (\ref{inf conv is delta soln}) of Proposition (\ref{item: mesh solves equation}),  $v^-$ is a $\delta$-supersolution of 
\[
v^-_t-F(D^2v^-)\geq -Kh
\]
in $U$. We point out that $U$ is of the form $\Omega'\times (a,b)$, where $\Omega'$ satisfies (U1). Therefore, Proposition \ref{main lemma} holds in $U$. Since  Proposition \ref{main lemma} applies to $\delta$-solutions with right-hand side $0$, we have to perturb $v^-$. We introduce 
\[
v(x,t)=v^-(x,t)+Kht,
\]
so that $v$ is a $\delta$-supersolution of (\ref{h thm main eqn}) in $U$. Moreover, by items  (\ref{item: mesh inf conv is semiconcave}) and (\ref{item: mesh lip bd on v}) of Proposition \ref{prop: inf-conv for mesh}, $v$ satisfies the regularity assumptions  (\ref{assump:d2v}) and (\ref{assump:vt}) of Proposition \ref{main lemma}. Let $r_\delta$ and $\tilde{U}$ be as in Proposition \ref{main lemma} (recall we are taking $\delta=Nh$):
\begin{align*}
r_\delta &= 9\sqrt{n}(1+\linfty{u}{U}+2\linfty{Du}{U})(Nh)^{\zeta}, \text{ and }\\
\tilde{U} &= \left\{(x,t)\in U :\  \    d((x,t),\bdry  U) \geq  2r_\delta, \text{ and }t\leq T-r_\delta^2\right\}.
\end{align*}
We apply Proposition \ref{main lemma} to with $u-\left(\sup_{\bdry \tilde{U}}(u-v)\right)$ instead of $u$, so that  the hypothesis  (\ref{main prop wlog}) of Proposition (\ref{main lemma}) is satisfied. We obtain the bound
\begin{equation*}
\sup_{\tilde{U}}(u-v) \leq c(Nh)^{\tilde{\alpha}}+ \sup_{\bdry \tilde{U}}(u-v).
\end{equation*}
By the definition of $v$, we have $v^-\leq v\leq v^-+KhT$ on $\tilde{U}$. We use this to bound the left-hand side of the previous line from below and the right-hand side of the  previous line from above and find,
\begin{equation}
\label{h consequence of Propo}
\sup_{\tilde{U}}(u-v^-) \leq KhT+ c(Nh)^{\tilde{\alpha}}+ \sup_{\bdry \tilde{U}}(u-v^-)\leq c_1h^{\tilde{\alpha}}+ \sup_{\bdry \tilde{U}}(u-v^-),
\end{equation}
where the second inequality follows since $\tilde{\alpha}\leq 1$.
We will now deduce the conclusion of the theorem from (\ref{h consequence of Propo}). This step is a bit more complicated than the corresponding step in the proof of Theorem \ref{main result} because $v_h$ is a function on  the mesh $\Umesh$ and not on all of $\Omega\times (0,T)$. By the definitions of $U$ and $\tilde{U}$, we have the following estimate, which we'll be applying frequently in the remainder of the argument: 
\[
d(\tilde{U}, \Umesh^b)= \inf_{(x,t)\in \tilde{U}}\{d((x,t),\Umesh^b)\}\leq c(h+h^{\zeta/2}+h^\zeta)\leq c_0 h^{\zeta/2}.
\]
We first bound $\sup_{\Umesh} (u-v_h)$ from above. Since $u$ and $v_h$ are H\"older-continuous, and since  $v^-\leq v^h$ holds on $\Umesh$, we have:
\[
\sup_{\Umesh} (u-v_h)\leq \sup_{\tilde{U}\cap E} (u-v_h) + c (d(\tilde{U}, \Umesh^b))^{\eta} \leq \sup_{\tilde{U}\cap E} (u-v_h) +ch^{\frac{\zeta\eta}{2}}\leq \sup_{\tilde{U}\cap E} (u-v^-) +c_2 h^{\frac{\zeta\eta}{2}}.
\]
We next use   (\ref{h consequence of Propo}) to estimate the first term on right-hand side of the previous line from above. We find,
\begin{equation}
\label{hbdUmesh}
\sup_{\Umesh} (u-v_h)\leq  c_1 h^{\tilde{\alpha}}+ \sup_{\bdry \tilde{U}}(u-v^-) +c_2h^{\frac{\zeta\eta}{2}}.
\end{equation}
We will now establish the following upper bound for the middle term of right-hand side of (\ref{hbdUmesh}):
\begin{equation}
\label{hbdonbdry}
\sup_{\bdry \tilde{U}}(u-v^-) \leq \sup_{\Umesh^b}(u-v_h)+c_3 h^{\frac{\zeta\eta}{2}}.
\end{equation}
To this end, let us fix some $(x,t)\in \bdry \tilde{U}$. Let $(y,s)$ be a nearest mesh point to $(x,t)$ (recall that $v_h$ is only defined on the mesh $E$, and $(x,t)$ need not be a mesh point).  By item (\ref{item: mesh: bound from below on inf conv}) of Proposition \ref{prop: inf-conv for mesh}, we have a bound from below on the difference between $v^-(x,t)$ and $v_h(y,s)$:
\begin{equation}
\label{hvv-}
v^-(x,t)\geq v_h(y,s)-M\omega(h, h^\zeta)^\eta\geq v_h(y,s) - ch^{\frac{\zeta\eta}{2}} ,
\end{equation}
where the second inequality follows from the definition of $\omega$. In addition, since $(x,t)\in \bdry\tilde{U}$ is ``near" $\Umesh^b$ and $(y,s)$ is a nearest mesh point to $h$, we have that $(y,s)$ is ``near" $\Umesh^b$ too. Precisely, there exists a point $(z,r)\in \Umesh^b$ with 
\[
d((y,s), (z,r))\leq nh + d(\tilde{U}, \Umesh^b) \leq ch^{\zeta/2}.
\]
We use this estimate and the fact that $v_h$ is  H\"older-continuous to bound the difference between $v_h(y,s)$ and $v_h(z,t)$ and obtain,
\[
v_h(y,s)\geq v_h(z,t)-ch^{\frac{\zeta\eta}{2}}.
\]
Using this to bound the right-hand side of (\ref{hvv-}), we find,
\begin{equation*}
v^-(x,t)\geq v_h(z,r)-ch^{\frac{\zeta\eta}{2}} .
\end{equation*}
Using the previous estimate and the fact that $u$ is also H\"older continuous and $d(\tilde{U}, \Umesh^b)\leq  c_0 h^{\zeta/2}$, we obtain
\[
u(x,t) -v(x,t) \leq u(z,r)-v_h(z,r) +ch^{\frac{\zeta\eta}{2}} \leq \sup_{\Umesh^b}(u-v_h)+c_3h^{\frac{\zeta\eta}{2}},
\]
where the second inequality follows since $(z,r)\in \Umesh^b$.
Since this holds for any $(x,t)\in \bdry \tilde{U}$, we find that (\ref{hbdonbdry}) holds as well. 
The assumption (\ref{hassum}) says that the first term on the right-hand side of (\ref{hbdonbdry}) is non-positive.  The estimate (\ref{hbdonbdry}) therefore becomes,
\begin{equation}
\label{uv-almost}
\sup_{\bdry \tilde{U}} (u-v^-)\leq c_3 h^{\frac{\zeta\eta}{2}}.
\end{equation}
Using (\ref{uv-almost}) to bound the right-hand side of (\ref{hbdUmesh}) and taking $\bar{c}=c_1+c_2+c_3$ and $\bar{\alpha}=\min\{\tilde{\alpha},\frac{\zeta\eta}{2}\}$ completes the proof the theorem.
\end{proof}

\appendix
\section{}
\label{Appendix}
\subsection{Proof of Proposition \ref{pre ABP}}
\label{subsec:proof of pre ABP}
We outline the proof Proposition \ref{pre ABP},  which is a slight modification of part of  the proof of the parabolic version of the ABP estimate in \cite[Section 4.1.2]{ImbertSilvestre}. We assume $\rho=1$; the general case follows by  rescaling. 

As in \cite[Section 4.1.2]{ImbertSilvestre}, we define the function $G\Gamma: Y^-_2\rightarrow \rr^{n+1} $ by
\[
G\Gamma(x,t)= ( \Gamma(x,t)-x\cdot D\Gamma(x,t), D\Gamma(x,t) ).
\]
Proposition \ref{pre ABP} follows from the following three lemmas. 
\begin{lem}
\label{lemPreABPsup}
Suppose $u$ is as in Proposition \ref{pre ABP}. Let $M$ denote $\sup_{Y_1^-}u^-$. Then, 
\[
\{(h,\xi)\in \rr^{n+1} : |\xi|\leq \frac{M}{2}\leq -h\leq M\} \subset G\Gamma(Y_1^-\cap \{u=\Gamma\}).
\]
\end{lem}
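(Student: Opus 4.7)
The plan is, for each $(h,\xi)$ in the target set, to produce a contact point $(x_0,t_0) \in Y_1^- \cap \{u=\Gamma\}$ at which the affine function $L(y) = \xi \cdot y + h$ is a supporting hyperplane of $\Gamma(\cdot,t_0)$ at $x_0$. The first coordinate of $G\Gamma(x_0,t_0)$ is then $\Gamma(x_0,t_0) - x_0 \cdot \xi = L(x_0) - x_0 \cdot \xi = h$ and the second is $\xi$, giving the claim.

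The first ingredient is a boundary check. For $|y|\leq 1$, the bounds $|\xi| \leq M/2$ and $-h \geq M/2$ yield $L(y) \leq |\xi| + h \leq 0$. Since $u \geq 0$ on $\bdry{Y_1^-}$ (so the extension $-u^-$ is zero there), this gives $L \leq -u^-$ on the parabolic boundary of $Y_1^-$.

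The main construction is a sliding argument in time. I would consider
\[
g(t) = \inf\bigl\{-u^-(y,s) - L(y) : (y,s) \in \overline{Y_1^-},\ s \leq t\bigr\},
\]
which is continuous and non-increasing in $t$, and non-negative for $t$ near $-1$ by the boundary check. The remaining bound $-h \leq M$, evaluated at a point $(x^*,t^*)$ where $u^-(x^*,t^*)=M$ (so that $-u^-(x^*,t^*) - L(x^*) = -M - \xi\cdot x^* - h$), drives $g$ down to zero at some $t_0 \in (-1,0]$. Pick $(x_0,s_0) \in \overline{Y_1^-}$ with $s_0 \leq t_0$ realizing $-u^-(x_0,s_0) - L(x_0) = 0$.

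At $(x_0,s_0)$, the plane $L$ touches $-u^-$ from below and, by the choice of $t_0$, is admissible in the representation formula of Lemma \ref{lem:representation} at time $s_0$. Hence $\Gamma(x_0,s_0) \geq L(x_0) = -u^-(x_0,s_0)$, while $\Gamma \leq -u^-$ forces equality, so $(x_0,s_0)\in\{u=\Gamma\}$ and $L \leq \Gamma(\cdot,s_0)$ with equality at $x_0$, identifying $\xi$ as a subgradient and yielding $G\Gamma(x_0,s_0)=(h,\xi)$. The main obstacle I expect is verifying that $(x_0,s_0)$ actually lies in $Y_1^-$ (i.e., $x_0$ in the open ball $B_1$) and at a point where $\Gamma(\cdot,s_0)$ is differentiable in $x$: the first follows from the strict inequality $L < 0$ on $\partial B_1$ whenever $(h,\xi)$ lies in the interior of the target set, and the second from convexity (differentiability a.e.), with the measure-zero exceptional configurations absorbed by continuity/approximation.
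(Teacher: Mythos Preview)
Your sliding-in-time strategy is the right one and is exactly what the paper defers to (the argument of \cite[Lemma~4.13]{ImbertSilvestre}). However, the execution has a real gap stemming from the domain on which you work. By the setup of Proposition~\ref{pre ABP}, $\Gamma$ is the lower monotone envelope of $-u^-$ on the \emph{doubled} cylinder $Y_2^-$ (with $-u^-$ extended by zero outside $Y_1^-$), not on $Y_1^-$. Consequently, for $L(y)=\xi\cdot y+h$ to be an admissible competitor in the representation formula for $\Gamma(\cdot,s_0)$ you must have $L\le -u^-$ on all of $Y_2^-\cap\{s\le s_0\}$; in particular $L\le 0$ on $B_2\setminus B_1$, i.e.\ $2|\xi|+h\le 0$. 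Your boundary check only gives $L\le 0$ on $\bar B_1$, and without the stronger inequality the conclusion $\Gamma(x_0,s_0)=L(x_0)$ (hence $D\Gamma(x_0,s_0)=\xi$) does not follow: $L$ may touch $-u^-$ from below on $Y_1^-$ yet lie strictly above $\Gamma$ at $(x_0,s_0)$, because the envelope taken over the larger set is smaller.

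A second, related gap is the claim that ``$-h\le M$ \ldots\ drives $g$ down to zero.'' At the maximizer $(x^*,t^*)$ of $u^-$ you get $-u^-(x^*,t^*)-L(x^*)=-M-\xi\cdot x^*-h$, and since $\xi\cdot x^*$ can be as negative as $-|\xi|$, this quantity need not be $\le 0$: for instance $|\xi|=M/2$, $x^*$ antiparallel to $\xi$ with $|x^*|$ close to $1$, and $h=-M$ give a value near $M/2>0$. The remedy is to run the sliding on $\overline{Y_2^-}$: then $g$ starts at $-(2|\xi|+h)$ and ends below $-M+|\xi|-h$, so one needs the pair of constraints $2|\xi|\le -h$ and $-h\le M-|\xi|$. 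This forces a slightly smaller target set than the one displayed (which is immaterial for the application, since only the volume $\sim M^{n+1}$ is used), but without some adjustment the inclusion as written cannot be obtained by your argument. Once placed on $Y_2^-$ with the ranges corrected, your proof goes through; the interior and differentiability issues you flag are then handled, respectively, by the strict inequality $L<0$ on $\partial B_2$ and by Lemma~\ref{lemPreABPGamma} (or, absent those hypotheses, by passing to the subdifferential of the convex function $\Gamma(\cdot,s_0)$).
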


\begin{lem}
\label{lemPreABPDG}
If $\Gamma$ is $C^{1,1}$ with respect to $x$ and Lipschitz with respect to $t$, then $G\Gamma$ is Lipschitz in $(x,t)$ and, for almost every $(x,t)\in Y^-_1$,
\[
\det D_{x,t}G\Gamma = \partial_t\Gamma \det D^2\Gamma.
\]
\end{lem}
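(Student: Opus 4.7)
\medskip

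\noindent\textbf{Proof plan for Lemma \ref{lemPreABPDG}.} The heart of the lemma is a direct computation of the $(n+1)\times(n+1)$ Jacobian of the map
$G\Gamma(x,t)=(\Gamma-x\cdot D\Gamma,\, D\Gamma)$
at any point where $\Gamma(\cdot,t)$ is twice differentiable in $x$ and $\Gamma(x,\cdot)$ is differentiable in $t$. By the $C^{1,1}_x$ and $\text{Lip}_t$ hypotheses combined with Rademacher's theorem, such points fill up a set of full measure in $Y_1^-$, so it suffices to establish the determinant identity pointwise there. I would order the rows of the Jacobian so that the first row collects the partial derivatives of the scalar component $\Gamma-x\cdot D\Gamma$ and rows $2,\ldots,n+1$ collect the partial derivatives of the vector components $\Gamma_{x_1},\ldots,\Gamma_{x_n}$; the columns are ordered as $\partial_{x_1},\ldots,\partial_{x_n},\partial_t$.

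The main computational step is a single row operation. The first row reads
\[
\bigl(-\textstyle\sum_{i} x_i\Gamma_{x_ix_1},\ \ldots,\ -\sum_{i} x_i\Gamma_{x_ix_n},\ \Gamma_t-\sum_i x_i\Gamma_{x_it}\bigr),
\]
while the $(i+1)$-th row is $(\Gamma_{x_ix_1},\ldots,\Gamma_{x_ix_n},\Gamma_{x_it})$. Adding $x_i$ times the $(i+1)$-th row to the first row (for each $i=1,\ldots,n$) annihilates all of its spatial entries and collapses the last entry to $\Gamma_t$. Cofactor expansion along the resulting row $(0,\ldots,0,\Gamma_t)$ picks out exactly the bottom-right $n\times n$ block, which is the spatial Hessian $D^2\Gamma$, and yields $\det D_{x,t}G\Gamma = \partial_t\Gamma \cdot \det D^2\Gamma$ (the sign is absorbed into the chosen orientation of $G\Gamma$, matching the convention of \cite[Section 4.1.2]{ImbertSilvestre}).

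For the Lipschitz claim on $G\Gamma$, the scalar component $\Gamma-x\cdot D\Gamma$ is Lipschitz in $x$ (since $\Gamma\in C^{1,1}_x$ so $D\Gamma$ is Lipschitz in $x$, and $x$ ranges over a bounded set) and Lipschitz in $t$ (since $\Gamma$ is Lipschitz in $t$, together with control of $D\Gamma$ in $t$ discussed below). The vector component $D\Gamma$ is Lipschitz in $x$ directly from the $C^{1,1}_x$ assumption. The remaining ingredient is Lipschitz continuity of $D\Gamma$ in $t$, which one extracts from the monotone envelope structure: the representation formula of Lemma~\ref{lem:representation} expresses $\Gamma(\cdot,t)$ as a supremum of affine functions that lie below $-u^-$ on $(-\infty,t]\times Y_\rho^-$, and comparing the optimizing hyperplanes at times $t$ and $s$ with the uniform Lipschitz-in-$t$ bound on $\Gamma$ produces the needed modulus.

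\emph{Main obstacle.} The real work lies not in the determinant identity — which is a clean row-operation calculation — but in rigorously justifying the Lipschitz regularity of $D\Gamma$ in $t$. This is where the monotone envelope structure of $\Gamma$ must be used in an essential way, since the stated hypotheses ($C^{1,1}_x$ and $\text{Lip}_t$) do not by themselves imply Lipschitz continuity of $D\Gamma$ in $t$ for a general function.
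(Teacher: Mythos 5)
The paper does not actually prove this lemma: it is imported verbatim as \cite[Lemma 4.4]{ImbertSilvestre}, so the comparison below is with the argument in that source. Your determinant computation is correct and is the standard one: the row operation $R_1\mapsto R_1+\sum_i x_iR_{i+1}$ reduces the Jacobian to a form with first row $(0,\dots,0,\Gamma_t)$, and cofactor expansion gives $\pm\,\partial_t\Gamma\det D^2\Gamma$. (For the record, with the ordering $G\Gamma=(\Gamma-x\cdot D\Gamma,\,D\Gamma)$ the sign is $(-1)^{n}$; this is harmless, since only $|\det D_{x,t}G\Gamma|$ — and in fact the nonnegative quantity $-\partial_t\Gamma\det D^2\Gamma$ — enters the proof of Proposition \ref{pre ABP}.)

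The gap is precisely the part you flag as the main obstacle, and it is not closed. First, your opening appeal to Rademacher is circular: to know that $G\Gamma$ is differentiable a.e.\ in $(x,t)$ — in particular that $\partial_t D\Gamma$ exists a.e., which your Jacobian matrix uses — you must already know that $G\Gamma$ (equivalently $D\Gamma$) is Lipschitz jointly in $(x,t)$, and that is exactly the unproven half of the lemma; $C^{1,1}_x$ plus Lipschitz-in-$t$ of $\Gamma$ alone do not put you in a position to invoke Rademacher for $D\Gamma$. Second, your proposed remedy (comparing optimizing hyperplanes in the representation formula of Lemma \ref{lem:representation}) is not carried out, and it would in any case prove a narrower statement than the lemma, whose hypotheses say nothing about $\Gamma$ being a monotone envelope. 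The route in the cited source uses only the stated hypotheses: apply the interpolation bound $\sup_{B_{r/2}}|Dg|\leq C\bigl(r^{-1}\sup_{B_r}|g|+r\sup_{B_r}\|D^2g\|\bigr)$ to $g=\Gamma(\cdot,t)-\Gamma(\cdot,s)$, for which $\sup|g|\leq C|t-s|$ and $\|D^2g\|\leq C$. You are right to sense a difficulty here: optimizing that bound in $r$ yields only $|D\Gamma(x,t)-D\Gamma(x,s)|\leq C|t-s|^{1/2}$, i.e.\ Lipschitz continuity with respect to the parabolic rather than the Euclidean distance. But having identified the crux, you must either supply the extra argument that upgrades this, or verify that the weaker modulus still supports the area-formula step in the proof of Proposition \ref{pre ABP}; as written, the proposal does neither, so the regularity half of the lemma remains unproven.
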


\begin{lem}
\label{lemPreABPGamma}
Suppose $u$ is as in  Proposition \ref{pre ABP}. Then $\Gamma$ is $C^{1,1}$ with respect to $x$ and Lipschitz in $t$.
\end{lem}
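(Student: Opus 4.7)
The strategy is to use the representation formula for $\Gamma$ in Lemma \ref{lem:representation} (applied with $-u^-$ in place of the function being enveloped) to obtain the convex-in-$x$ and non-increasing-in-$t$ structure of $\Gamma$, and then to upgrade these to Lipschitz in $t$ and $C^{1,1}$ in $x$ using the two hypotheses on $u$. From the representation, $\Gamma(\cdot,t)$ is a pointwise supremum of affine functions of $x$, hence convex in $x$; and since the admissibility constraint $\zeta\cdot y+h\leq -u^-(y,s)$ for all $s\leq t$ only tightens as $t$ grows, $\Gamma$ is non-increasing in $t$. For Lipschitz continuity in $t$, given $t_1<t_2$ monotonicity gives $\Gamma(x,t_2)\leq \Gamma(x,t_1)$; for the other direction, since $u$ is Lipschitz in $t$ with constant $K$ and $-u^-=\min(u,0)$ inherits the same modulus, any affine $L$ admissible for $(x,t_1)$ makes $L-K(t_2-t_1)$ admissible for $(x,t_2)$, which yields $|\Gamma(x,t_2)-\Gamma(x,t_1)|\leq K|t_2-t_1|$.

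For $C^{1,1}$ regularity in $x$, I fix $t$ and set $g_t(x):=\inf_{s\leq t}(-u^-)(x,s)$. The first sub-step is to identify $\Gamma(\cdot,t)$ with the convex envelope in $x$ of $g_t$: the bound $\Gamma(\cdot,t)\leq -u^-(\cdot,s)$ for every $s\leq t$ (which follows from $\Gamma\leq -u^-$ combined with monotonicity of $\Gamma$ in $t$) gives $\Gamma(\cdot,t)\leq g_t$, and convexity of $\Gamma(\cdot,t)$ then gives $\Gamma(\cdot,t)\leq \operatorname{conv}(g_t)$; conversely, $\operatorname{conv}(g_t)$ is convex in $x$, non-increasing in $t$ (because $g_t$ is), and $\leq -u^-$, so it lies below $\Gamma$ by the defining maximality of $\Gamma$. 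The second sub-step is to show $D^2 g_t\leq KI$ in the paper's distributional sense by a pointwise case split on the sign of $u(x_0,s_0)$: the hypothesis supplies an upper quadratic of opening $K$ for $u(\cdot,s_0)$ at $x_0$, which bounds $-u^-(\cdot,s_0)$ from above (trivially by the constant $0$ when $u(x_0,s_0)>0$, directly by the same quadratic when $u(x_0,s_0)\leq 0$), and an approximately-minimizing choice of $s_0\leq t$ transfers the bound to $g_t$. Finally, Carath\'eodory writes any $x_0$ as $x_0=\sum_i\lambda_i x_i$ with at most $n+1$ contact points $x_i\in\{\Gamma(\cdot,t)=g_t\}$; averaging the upper quadratics for $g_t$ at the $x_i$ produces a quadratic of opening $K$ bounding $\Gamma(\cdot,t)$ from above at $x_0$. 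Combined with the convexity bound $D^2\Gamma(\cdot,t)\geq 0$ from the first paragraph, this forces $\Gamma(\cdot,t)\in C^{1,1}$ with $\|D^2\Gamma(\cdot,t)\|_{L^\infty}\leq K$, and the bound is uniform in $t$.

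The main obstacle will be the Carath\'eodory step together with the convex-envelope identification: one must ensure that enough contact points $x_i$ actually exist to decompose an arbitrary $x_0$ (this is automatic once $g_t$ is upper semicontinuous and locally bounded, so that the envelope touches it at extreme points whose convex hull covers the domain), and the passage from an upper quadratic for each $-u^-(\cdot,s)$ to one for the infimum $g_t$ must be handled carefully at points where the defining infimum is not attained, via a routine approximation with near-minimizing $s_0$.
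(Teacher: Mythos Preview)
Your proposal is correct and takes a genuinely different route from the paper. The paper does not give a self-contained argument: it invokes \cite[Lemma 4.11]{ImbertSilvestre}, observing that in that proof the supersolution hypothesis on $u$ is used only to verify a parabolic ``jet condition'' (namely, that any paraboloid $P(y,s)=c+\alpha s+p\cdot y+y\cdot Xy^T$ touching $\Gamma$ from below has $\alpha\geq -K$ and $X\leq KI$) at points of the contact set $\{u=\Gamma\}$, while the extension of this condition to non-contact points---which is where the Carath\'eodory-type work lives---is left to the reference. The paper then simply checks the jet condition on $\{u=\Gamma\}$ directly from the two hypotheses on $u$.

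Your approach instead slices in time via the clean identification $\Gamma(\cdot,t)=\operatorname{conv}_x(g_t)$ with $g_t=\inf_{s\leq t}(-u^-)(\cdot,s)$, reduces to the elliptic convex-envelope setting, and carries out the Carath\'eodory argument explicitly; this is more elementary and self-contained at the cost of some length. One small imprecision worth tightening: literally ``averaging the upper quadratics'' $\sum\lambda_i P_i$ gives a paraboloid of opening $K$ that sits strictly above $\Gamma(x_0,t)$ by the Jensen gap $\tfrac{K}{2}(\sum\lambda_i|x_i|^2-|x_0|^2)$. The argument you want uses convexity of $\Gamma(\cdot,t)$ together with the global semiconcavity bound $g_t(x_i+v)\leq L_0(x_i+v)+\tfrac{K}{2}|v|^2$ (the linear part being forced to equal that of the common supporting hyperplane $L_0$) to obtain $\Gamma(x_0+v,t)\leq\sum_i\lambda_i\,g_t(x_i+v)\leq L_0(x_0+v)+\tfrac{K}{2}|v|^2$; this is standard and is surely what you intend.
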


Lemma \ref{lemPreABPsup} is very similar to  \cite[Lemma 4.13]{ImbertSilvestre} -- the latter is stated for $u$ a supersolution of $u_t-\mathcal{M}^-(D^2u)\geq f(x)$ in $Y_1^-$. However, it is easy to verify that the proof of  \cite[Lemma 4.13]{ImbertSilvestre} holds for a  general continuous  function $u$, not only for supersolutions. Lemma \ref{lemPreABPDG} is exactly \cite[Lemma 4.4]{ImbertSilvestre}. 
The proof of Lemma \ref{lemPreABPGamma} is very similar to that of Lemma \cite[Lemma 4.11]{ImbertSilvestre}, which states that if $u$ is a supersolution of $u_t-\mathcal{M}^-(D^2u)\geq f(x)$, then $\Gamma$ is $C^{1,1}$ in $x$ and Lipschitz in $t$.  We  explain the modifications needed to obtain Lemma \ref{lemPreABPGamma}:
\begin{proof}[Proof of Lemma \ref{lemPreABPGamma}] 
As in the  proof of Lemma \cite[Lemma 4.11]{ImbertSilvestre}, it will suffice to show that there exists a constant $K$ such that for all $(x,t)\in Y^-_1$, the following holds:
\begin{equation}
\label{jet condition}
\left\{
\begin{split}
&\text{if $P(y,s)=c+\alpha t +p\cdot x+ x\cdot Xx^T$ satisfies $P(x,t)=\Gamma(x,t)$ and }\\ &\text{$P(y,s)\leq \Gamma(y,s)$ for all $(y,s)$ in a neighborhood of $(x,t)$,}\\&\text{then $\alpha\geq -K$ and $X\leq KI$.}
\end{split}
\right.
\end{equation}
In the proof of \cite[Lemma 4.11]{ImbertSilvestre}, the assumption that $u$ is a supersolution of $u_t-\mathcal{M}^-(D^2u)\geq f(x)$ is used only to show that (\ref{jet condition}) holds for $(x,t)\in \{u=\Gamma\}$. Thus, we need to verify  (\ref{jet condition}) for such points. To this end, assume $(x,t)\in \{u=\Gamma\}$ and $P(y,s)=c+\alpha t +p\cdot x+ x\cdot X x^T$ satisfies $P(x,t)=\Gamma(x,t)$ and $P(y,s)\leq \Gamma(y,s)$ for all $(y,s)$ in a neighborhood of $(x,t)$. Since $\Gamma$ is the lower monotone envelope of $\min(u,0)$ and  $(x,t)\in \{u=\Gamma\}$, we have 
\begin{equation}
\label{Ptouchu}
P(x,t)=u(x,t) \text{ and }P(y,s)\leq u(y,s) \text{ for all $(y,s)$ in a neighborhood of $(x,t)$}.
\end{equation}
Since we have $D^2u(x,t)\leq KI$ in the sense of distributions,  there exists a paraboloid $Q(x)$ of opening $K$ that touches $u(\cdot,t)$ from above at $x$.  Together with (\ref{Ptouchu}), this implies that $Q$ touches $P(\cdot, t)$ from above at $x$. Therefore, $X\leq KI$. 
In addition, (\ref{Ptouchu}) implies 
\[
|P_t(x,t)|\leq \tlipsemi{u}{Y_1^-}.
\]
Since  $\tlipsemi{u}{Y_1^-}\leq K$, we obtain $\alpha = P_t(x,t) \geq -K$. Thus we've verified (\ref{jet condition}) and the proof of the lemma is complete.
\end{proof}

\begin{proof}[Proof of Proposition \ref{pre ABP}]
Lemma \ref{lemPreABPsup},  Lemma \ref{lemPreABPDG} and the Area Formula \cite[Chapter 3]{Evans Gariepy}, which applies to $G\Gamma$ because of Lemma \ref{lemPreABPGamma}, imply
\begin{equation}
\label{ABP-like bd}
\begin{split}
CM^{n+1} &= |\{(h,\xi)\in \rr^{n+1} : |\xi|\leq \frac{M}{2}\leq -h\leq M\} |\\&\leq |G\Gamma(Y_1^-\cap \{u=\Gamma\})|\\
&\leq \int_{Y_1^-\cap \{u=\Gamma\}} |\det D_{x,t} G\Gamma |\\&\leq  \int_{Y_1^-\cap \{u=\Gamma\}} -\partial_t\Gamma \det D^2\Gamma .
\end{split}
\end{equation}
Because $\Gamma$ is non-increasing in $t$ and convex in $x$, the Alexandroff theorem for functions depending on $x$ and $t$ (see Krylov \cite[Appendix 2]{Krylov book}) implies that there exists $A\subset Y_1^-$ with $|Y_1^-\setminus A|=0$ such that for every $(x,t)\in A$, 
\[
|\Gamma(y,s)-\Gamma(x,t)-D\Gamma(x,t)\cdot (y-x) - (y-x) \cdot D^2\Gamma(x,t)(y-x)^T-\partial_t\Gamma(x,t)(s-t)| \leq o(|x-y|^2+|t-s|)
\]
(in other words, $\Gamma$ is twice differentiable almost everywhere on $Y_1^-$). 
We claim
\begin{equation}
\label{claim for 2nd pt}
\text{ if $(x,t)\in A\cap \{u=\Gamma\}$, then }
-\partial_t\Gamma(x,t) \det D^2\Gamma(x,t) \leq K^{n+1}.
\end{equation}
Let us fix $(x,t)\in A\cap \{u=\Gamma\}$. We have 
\begin{equation}
\label{Gamtouchu}
u(x,t)=\Gamma(x,t)\text{ and }u(y,s)\geq \Gamma(y,s)\text{ for all $(y,s)$ in a neighborhood of $(x,t)$}.
\end{equation}
Since we have $D^2u(x,t)\leq KI$ in the sense of distributions, we find that  there exists a paraboloid $Q(x)$ of opening $K$ that touches $u(\cdot,t)$ from above at $x$.  Therefore, $Q$ touches $\Gamma(\cdot, t)$ from above at $x$. Thus, $D^2\Gamma(x,t)\leq KI$. Since $\Gamma$ is convex in $x$, we obtain $D^2\Gamma(x,t)\geq 0$. Therefore,
\[
0\leq\det D^2\Gamma \leq K^n.
\] 
In addition, (\ref{Gamtouchu}) implies 
\[
|\partial_t\Gamma(x,t) |\leq \tlipsemi{u}{Y_1^-}.
\]
Since $\tlipsemi{u}{Y_1^-}\leq K$, we obtain $\partial_t\Gamma(x,t) \geq -K$. Since $\Gamma$ is non-increasing in $t$, we have $\partial_t\Gamma(x,t)\leq 0$. Together with the estimate on $\det D^2\Gamma$, this implies that (\ref{claim for 2nd pt}) holds. The bound (\ref{cons of ABPbd}) follows by using (\ref{claim for 2nd pt}) to bound the right-hand side of (\ref{ABP-like bd}), recalling the definition of $M=\sup_{Y_1^-} u^-$, and rearranging. 

\end{proof}

\subsection{Inf and sup convolutions}
In the following proposition we state the facts about inf- and sup- convolutions that are used in this paper. Their proofs are very similar to those in the elliptic case (see \cite[Propositions 5.3 and 5.5]{Rates Homogen} and \cite[Lemma 5.2]{Cabre Caffarelli book}) and we omit them.  
\begin{prop}
\label{properties of inf-convolution}
Assume $v\in C(\Omega\times (0,T))$.  Then:
\begin{enumerate}
\item We have $v^-_{\theta, \theta}(x,t)\leq v(x,t)\leq v^+_{\theta, \theta}(x,t)$ for all $(x,t)\in \Omega\times (0,T)$.
\item 
\label{item:bound from below on inf conv}
If $v\in C^{0,\eta}(\Omega \times (0,T))$, then for any $(x,t)\in U^{\theta,\delta}$, 
\[
v^+_{\theta, \theta}(x,t)-\etasemi{v}{\Omega \times (0,T)}^\frac{2}{2-\eta}(2\theta)^{\frac{\eta}{2-\eta}}\leq v(x,t)\leq v^-_{\theta, \theta}(x,t)+\etasemi{v}{\Omega \times (0,T)}^\frac{2}{2-\eta}(2\theta)^{\frac{\eta}{2-\eta}}.
\]
\item 
\label{item:inf conv is semiconcave}
In the sense of distributions,  $D^2 v^-_{\theta,\theta}(x,t) \leq \theta^{-1}I$ and $D^2 v^+_{\theta,\theta}(x,t) \geq -\theta^{-1}I$ for every $(x,t)\in U^{\theta,\delta}$.
\item 
\label{item:lip bd on v}
We have $\tlipsemi{v^{\pm}_{\theta,\theta}}{\Omega\times(0,T)}\leq 3T\theta^{-1}$.
\item \label{inf conv is delta soln} If  $v$ is a $\delta$-supersolution  of $v_t-F(D^2v)=0$ in $\Omega \times (0,T)$, then $v^-_{\theta,\theta}$ is a $\delta$-supersolution of $v_t-F(D^2v)=0$ in $U^{\theta,\delta}$. 
 If  $v$ is a $\delta$-subsolution  of $v_t-F(D^2v)=0$ in $\Omega \times (0,T)$, then $v^+_{\theta,\theta}$ is a $\delta$-subsolution of $v_t-F(D^2v)=0$ in $U^{\theta,\delta}$.
\end{enumerate}
\end{prop}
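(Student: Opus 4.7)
\medskip

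My plan is to handle the six items in the order (1), (2), (3), (4), (5)--(6), since each relies on the previous ones. For (1), I would use the minimizer itself as a test point in the infimum: if $(x^*,t^*)$ achieves the infimum for $v^-_{\theta,\theta}(x,t)$, then taking $(y,s)=(x,t)$ in the definition gives $v(x^*,t^*)+\tfrac{|x-x^*|^2+|t-t^*|^2}{2\theta}\le v(x,t)$, so the penalty is bounded by $v(x,t)-v(x^*,t^*)\le 2\linfty{v}{\Omega\times(0,T)}$, which rearranges to the stated Euclidean bound. For (2), the inequality $v^-_{\theta,\theta}\le v$ is trivial from $(y,s)=(x,t)$; for the reverse direction I would use H\"older continuity together with the bound from (1) to control $v(x,t)-v(x^*,t^*)\le \etasemi{v}{}d_e^\eta$, then balance $\etasemi{v}{}d_e^\eta$ against the penalty $d_e^2/(2\theta)$ by the elementary optimization $\max_{d}(A d^\eta - d^2/(2\theta))$ to produce the $(2\theta\etasemi{v}{})^{\eta/(2-\eta)}$ factor.

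For (3), I would rewrite
\[
v^-_{\theta,\theta}(x,t)-\frac{|x|^2+t^2}{2\theta}=\inf_{(y,s)\in \Omega\times(0,T)}\Bigl\{v(y,s)+\frac{|y|^2+s^2}{2\theta}-\frac{x\cdot y+ts}{\theta}\Bigr\},
\]
expressing $v^-_{\theta,\theta}(x,t)-\tfrac{|x|^2+t^2}{2\theta}$ as an infimum of affine functions of $(x,t)$, hence concave in the sense of distributions; this gives $D^2 v^-_{\theta,\theta}\le \theta^{-1}I$, and the sup-convolution case is symmetric. For (4), I would fix $x$ and $s<t$ with respective minimizers $(y_s^*,s^*)$ and $(y_t^*,t^*)$, then use each minimizer as a test point at the other time to get
\[
v^-_{\theta,\theta}(x,t)-v^-_{\theta,\theta}(x,s)\le \frac{|t-s^*|^2-|s-s^*|^2}{2\theta}=\frac{(t-s)(t+s-2s^*)}{2\theta},
\]
and since $t,s,s^*\in(0,T)$ we have $|t+s-2s^*|\le 2T$, yielding the semi-norm bound $T/\theta\le 3T/\theta$ (the other direction is identical).

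The main obstacle is (5)--(6), the preservation of the $\delta$-super/subsolution property. I would argue by translation: fix $(x_0,t_0)\in U^{\theta,\delta}$ with a minimizer $(x^*,t^*)$, so by (1) we have $d_e((x_0,t_0),(x^*,t^*))\le 2\theta^{1/2}\linfty{v}{}^{1/2}$, which together with the definition of $U^{\theta,\delta}$ guarantees $Y^-_\delta(x^*,t^*)\subset \Omega\times(0,T)$. Suppose $P\in\PPi$ touches $v^-_{\theta,\theta}$ from below on $Y^-_\delta(x_0,t_0)$ with equality at $(x_0,t_0)$. Define the translated polynomial
\[
\tilde P(y,s):=P(y+x_0-x^*,\,s+t_0-t^*)-\frac{|x_0-x^*|^2+|t_0-t^*|^2}{2\theta},
\]
which is still in $\PPi$ with identical first and second derivatives. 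Using $(y,s)$ as a test point for $v^-_{\theta,\theta}$ at the shifted argument $(y+x_0-x^*,s+t_0-t^*)$ shows $\tilde P \le v$ on $Y^-_\delta(x^*,t^*)$, and the equality at $(x_0,t_0)$ transfers to $\tilde P(x^*,t^*)=v(x^*,t^*)$. Since $v$ is a $\delta$-supersolution and $Y^-_\delta(x^*,t^*)\subset\Omega\times(0,T)$, we get $\tilde P_t-F(D^2\tilde P)\ge 0$ at $(x^*,t^*)$, which reads $P_t-F(D^2P)\ge 0$ at $(x_0,t_0)$ as desired. The subsolution case is identical after swapping inequalities.
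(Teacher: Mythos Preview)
Your argument is correct and is exactly the standard route; in fact the paper omits the proof entirely, saying only that it ``is very similar to the elliptic case'' in \cite{Rates Homogen}, so your outline is precisely the kind of verification the paper defers to that reference. One small technical point on item~(2): the H\"older seminorm $\etasemi{v}{\Omega\times(0,T)}$ is defined with respect to the \emph{parabolic} distance $d$, not the Euclidean $d_e$ appearing in the penalty, so the inequality $v(x,t)-v(x^*,t^*)\le \etasemi{v}{}\,d_e^{\eta}$ is not literally what H\"older continuity gives you; you should either optimize separately in $|x-x^*|$ and $|t-t^*|$ or note that the penalty controls both and absorb the discrepancy into the constant---the stated bound still follows.
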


\subsection{Inf and sup convolutions of mesh functions}
In the following proposition we summarize the facts that we need about inf and sup convolutions of mesh functions. 
\begin{prop}
\label{prop: inf-conv for mesh}
Let us take $\theta>0$ and $v\in C^{0,\eta}(\Umesh)$. Then:
\begin{enumerate}
\item 
\label{eq: d(q,p^*)} If $(x^*,t^*)$ is a point at which the infimum (resp. supremum) is achieved in the definition of $\convmesh(x,t)$  (resp. $v^{+}_{\theta, \theta}(x,t) $), then
\[ 
d_e((x^*,t^*),(x,t)) \leq \omega(h,\theta).
\]
In particular, if $(x,t)\in U^h_\theta$ then $(x^*,t^*)\in \Umesh^i$.
\item  We have $v^-_{\theta, \theta}(x,t)\leq v(x,t)\leq v^+_{\theta, \theta}(x,t)$ for all $(x,t)\in \Umesh$.
\item 
\label{item: mesh: bound from below on inf conv}
Assume $(x,t)\in \Omega \times (0,T)$ and $(y,s)\in \Umesh$ is a closest mesh point to $(x,t)$. Then 
\[
v^{+}_{\theta, \theta} (y,s) -\etanorm{v}{\Umesh} \omega(h,\theta)^{\eta}\leq v(x,t)\leq 
\convmesh(y,s) + \etanorm{v}{\Umesh} \omega(h, \theta)^{\eta}.
\]
\item 
\label{item: mesh inf conv is semiconcave}
In the sense of distributions, $D^2 \convmesh(x,t) \leq \theta^{-1}I$  and $D^2 v^{+}_{\theta, \theta} (x,t)\geq -\theta^{-1}I$ for every $(x,t)\in U^h_\theta$.
\item 
\label{item: mesh lip bd on v}
We have $\tlipsemi{v^{\pm}_{\theta,\theta}}{\Omega\times(0,T)}\leq 3T\theta^{-1}$.
\item 
\label{opsandconv}
If $(x,t)\in U^h_\theta$ is such that $Y^-_{Nh}(x,t)\subset U^h_\theta$ and $(x^*,t^*)$ is a point at which the infimum (resp. supremum) is achieved in the definition of $\convmesh(x,t)$  (resp. $v^{+}_{\theta, \theta}(x,t) $), then
\begin{align*}
\delta^-_\tau \convmesh(x,t) \geq \delta^-_\tau v(x^*, t^*) &\text{ and } \delta^2_y\convmesh(x,t)\leq \delta^2_y v(x^*,t^*) \\
(\text{resp. }\delta^-_\tau v^+_{\theta,\theta}(x,t) \leq \delta^-_\tau v(x^*, t^*)  &\text{ and } \delta^2_y v^+_{\theta,\theta}(x,t)\geq \delta^2_y v(x^*,t^*) ).
\end{align*}
\end{enumerate}
\end{prop}
 The proofs of items (1)-(5) are very similar to the elliptic case (see Proposition 2.3 of \cite{Approx schemes} and \cite[Lemma 5.2]{Cabre Caffarelli book}) so we omit them. We provide the proof of item (\ref{opsandconv}).
 \begin{proof}[Proof of item (\ref{opsandconv}) of Proposition \ref{prop: inf-conv for mesh}]
We will give the proof of the  first inequality; the proofs of the others are analogous. By the definition of the discrete operator $\delta^-_\tau$ and the definition of $(x^*,t^*)$, we have
\begin{align*}
\delta^-_\tau \convmesh(x,t) & = \frac{1}{h}\left(\convmesh(x,t)-\convmesh(x,t-h^2)\right)= \frac{1}{h}\left(v(x^*,t^*) + \frac{|x-x^*|^2}{2\theta}+\frac{|t-t^*|^2}{2\theta} -\convmesh(x,t-h^2)\right).
\end{align*}
We now estimate $\convmesh(x,t-h^2)$ from above. Since $(x,t)\in U^h_\theta$,  item  (\ref{eq: d(q,p^*)}) of  this proposition implies $(x^*, t^*) \in \Umesh^i$. In addition, since we have $Y^-_{Nh}(x,t)\subset U^h_\theta$, we find  $(x^*,t^*-h^2)\in \Umesh$. Therefore, we may use $(x^*,t^*-h^2)$ as a ``test point" in the definition of $v^-_{\theta,\theta}(x,t-h^2)$ and obtain,
\begin{align*}
\delta^-_\tau \convmesh(x,t) &\geq \frac{1}{h}\left(v(x^*,t^*) + \frac{|x-x^*|^2}{2\theta}+\frac{|t-t^*|^2}{2\theta} - v(x^*,t^*-h^2) -\frac{|x-x^*|^2}{2\theta}-\frac{|t-t^*|^2}{2\theta}\right)
\\& =\delta^-_\tau v(x^*, t^*),
\end{align*}
where the  equality follows from the definition of the operator $\delta^-_\tau$. 
\end{proof}

\section*{Acknowledgements} The author thanks her thesis advisor, Professor Takis Souganidis, for suggesting this problem and for his guidance and encouragement. She also thanks the anonymous referee, whose comments  have greatly helped her improve the exposition of this paper and clarify many of the arguments.

\end{document}